\numberwithin{equation}{section}
\newtheorem{definition}{Definition}[section]
\newtheorem*{definition*}{Definition}
\newtheorem{assumption}{Assumption}[section]
\newtheorem{proposition}{Proposition}[section]
\newtheorem*{proposition*}{Proposition}
\newtheorem{lemma}{Lemma}[section]
\newtheorem*{lemma*}{Lemma}
\newtheorem{theorem}{Theorem}[section]
\newtheorem*{theorem*}{Theorem}
\newtheorem{coro}{Corollary}[theorem]
\newtheorem*{coro*}{Corollary}
\newtheorem*{example*}{Example}
\newtheorem*{exercise*}{Exercise}
\newtheorem{remark}{Remark}[section]
\newtheorem*{remark*}{Remark}
\newcommand{\bfH}{\mathbf{H}}
\newcommand{\bfv}{\mathbf{v}}
\newcommand{\bfx}{\mathbf{x}}
\newcommand{\bfy}{\mathbf{y}}
\newcommand{\bfz}{\mathbf{z}}
\newcommand{\bfnu}{{ \boldsymbol{\nu} }}
\newcommand{\bbR}{\mathbb{R}}
\newcommand{\cB}{\mathcal{B}}
\newcommand{\cD}{\mathcal{D}}
\newcommand{\cG}{\boldsymbol{\mathcal{G}}}
\newcommand{\cL}{\mathcal{L}}
\newcommand{\cN}{\mathcal{N}}
\newcommand{\scrH}{\mathscr{H}}
\newcommand{\bfal}{{\boldsymbol{\alpha}}}
\newcommand{\al}{\alpha}
\newcommand{\del}{\delta}
\newcommand{\Del}{\Delta}
\newcommand{\Gam}{\Gamma}
\newcommand{\eps}{\varepsilon}
\newcommand{\om}{\omega}
\newcommand{\Om}{\Omega}
\newcommand{\kap}{\kappa}
\newcommand{\sig}{\sigma}
\newcommand{\txtin}{\quad\text{in}\quad}
\newcommand{\txtor}{\quad\text{or}\quad}
\newcommand{\txtfor}{\quad\text{for}\quad}
\newcommand{\txtand}{\quad\text{and}\quad}
\newcommand{\txtforall}{\quad\text{for all}\quad}
\newcommand{\grad}{\nabla}
\newcommand{\Rn}{\mathbb{R}^n}
\newcommand{\cl}{\overline}
\newcommand{\half}{\frac{1}{2}}
\newcommand{\Dn}[1]{\frac{\partial #1}{\partial \nu}}
\newcommand{\tl}{\tilde}
\newcommand{\p}{\partial}
\newcommand{\dfn}{\mathrel{\mathop:}=}
\newcommand{\intOm}{\int_{\Omega}}
\newcommand{\ltom}{{L^2(\Omega)}}
\newcommand{\fmbox}[1]{\fbox{$\displaystyle {#1}$}}
\newcommand{\ext}{\text{ext}}
\newcommand{\inter}{\operatorname{int}}
\newcommand{\dist}{\operatorname{dist}}
\newcommand{\supp}{\operatorname{supp}}
\newcommand{\lp}{\big(\!\!\!\;\big(}
\newcommand{\rp}{\big)\!\!\!\;\big)}
\renewcommand{\lg}{\langle}
\newcommand{\rg}{\rangle}
\newcommand{\ds}{\displaystyle}
\title[Nonlocal Biharmonic]{A nonlocal biharmonic operator and its connection with the classical analogue
}
\author[P.~Radu]{Petronela Radu}
\email{pradu@unl.edu}
\author[D.~Toundykov]{Daniel Toundykov}
\email{dtoundykov@unl.edu}
\author[J.~Trageser]{Jeremy Trageser}
\email{s-trages1@math.unl.edu}
\address{University of Nebraska-Lincoln, Department of Mathematics, Lincoln, NE 68588}
\subjclass[2010]{Primary:
45A05,  
45P05.  	
Secondary:
35L35, 	
74K20}       
\keywords{nonlocal, peridynamics, plate, biharmonic, clamped, hinged, irregular domains}
\thanks{The research of the first author was partially supported by the National Science Foundation under Grant DMS-0908435.\\
The research of second author was partially supported by the National Science Foundation under Grant DMS-1211232.}
\def\Ln{\cL_{\bfal_{\del_n}}}
\begin{document}

\begin{abstract}

We consider a nonlocal operator as a natural generalization to the biharmonic operator that arises in thin plate theory. The operator is built in the nonlocal calculus framework defined in \cite{DGLZ2013} and connects with the recent theory of peridynamics. This framework enables us to consider non-smooth approximations to fourth-order elliptic boundary value problems. For these systems we introduce nonlocal formulations of the clamped and hinged boundary conditions that are well-defined even for irregular domains.  We demonstrate existence and uniqueness of solutions to these nonlocal problems and demonstrate their $L^2$-strong convergence to functions in $W^{1,2}$ as the nonlocal interaction horizon goes to zero. For regular domains we identify these limits as the weak solutions of the corresponding classical elliptic boundary value problems.  As a part of our proof we also establish that the nonlocal Laplacian of a smooth function is Lipschitz continuous.
\end{abstract}

\maketitle

\section{Introduction}
Classical models of continuum mechanics give rise to fourth-order elliptic PDEs  describing transversal deformations of thin plates, shells or beams, possibly in coupling with additional equations quantifying shear forces \cite{Sel,Yao}. Under some regularity conditions on the boundary, solutions to a fourth-order elliptic boundary value problem generally acquire four orders of weak differentiability with respect to the regularity of the interior forcing term. In two dimensions, systems with square integrable forcing typically possess weak solutions  that have at least $W^{2,2}\subset W^{1,\infty}$ Sobolev regularity. In particular, such solutions (in the 2D case) are necessarily continuous which makes it non-trivial to account for irregularities, such as cracks.  On the other hand, prime examples of plate structures are suspension bridges, where the dynamic formation of cracks and their evolution is of great interest; however, discontinuities corresponding to damage preclude the inclusion of smoothness assumptions on the solutions. 

A proposed paradigm for investigating less regular solutions is to replace the classical operators of elasticity theory with suitable approximations that replace derivatives with singular integral operators. This approach is prompted by physical considerations, such as describing the stress at a point via the cumulative interaction with points from its neighborhood; this interaction is often captured through a suitable integral kernel.

Nonlocal versions of the classical Laplace operator have been investigated in various settings and phenomena: nonlocal diffusion \cite{Rossi}, population and swarm models \cite{CF, MK}, and image processing \cite{Gilboa2009}. Recently, this nonlocal operator was used in the peridynamic theory developed by Silling \cite{Silling2000} to describe the evolution of damage in solids.  The relaxed regularity conditions permit fractures
to be represented in the solution of the system prior to discretization rather than being considered through separate ad-hoc frameworks. Within the context of the state-based formulation of peridynamics, nonlocal models of beams and plates were developed in \cite{GF1,GF2}; however, the nonlocality in these models resembles more the nonlocal Laplacian structure (see \cite{DGLZ2013}). 
Nonlocal versions of higher-order operators have also been considered in phase separation problems and Cahn-Hilliard equations, see  \cite{Rossi, Han, NO}.

By replacing differential operators with integral operators it is possible to have well-defined solutions with discontinuous (in fact, with lack of any Sobolev regularity) behavior. Motivated by the aforementioned developments we consider a nonlocal version of the biharmonic operator, obtained by iterating the nonlocal Laplacian. We show that solutions of the nonlocal biharmonic equation with nonlocal analogues of hinged or clamped boundary conditions require minimal integrability assumptions; moreover, in the limit they recover the classical weak solutions of the corresponding elliptic fourth-order problem. 

As previous works have also demonstrated, the nonlocal setting offers an alternative way to study problems in a weak formulation. In contrast with the classical framework which often considers \emph{regularized solutions} to a problem and passes to the limit in a weaker topology to obtain a \emph{less regular} solution, in the nonlocal framework  the investigation starts in a weaker topology, and then as the support of the kernel shrinks, the corresponding weak solutions converge (in the weak topology) to a \emph{more regular} solution. The diagram below illustrates the two approaches in the elliptic framework:
\begin{center}
\begin{tabular}{c|c}
\bf Modeling approach & \bf Convergence\\[0.05in]
\hline\\
Local/Classical & regularized approximations $\xrightarrow{W^{k,2}}$ weak ($W^{k,2}$) solution\\[.05in]
Nonlocal & $L^2$ approximations $\xrightarrow{L^2}$ weak ($W^{k,2}$) solution
\end{tabular}
\end{center}
Finally, the results presented here do not rely on the scalar setting, so they are transferrable to the vectorial framework as introduced in \cite{DGLZ2013}. 

\medskip


\noindent {\bf Boundary conditions.} Although a nonlocal form of the biharmonic operator appears natural, the form of the boundary conditions (BC) is more delicate. Two types of homogeneous BC are fundamental in plate systems: hinged ($u=0, \,\Delta u =0$ on the boundary of the domain) and clamped ($u=0, \frac{\partial u}{\partial \nu}=0$). Since nonlocal operators are associated with collar-type constraints---imposed on sets of positive Lebesgue measure that surround the domain---we need to find appropriate nonlocal generalizations that converge to the classical conditions in the limit as the approximations improve. To our knowledge, this is the first work that deals with integral approximations of higher order elliptic operators coupled with first and second-order boundary conditions. One of the features of the nonlocal approach
is that approximations and their boundary conditions can be formulated for very rough domains. The details concerning the boundary regularity required in our results are summarized in the following table: 
\begin{center}
\begin{tabular*}{\textwidth}{l | l}
\bf Regularity of  & \bf Conclusion for the nonlocal biharmonic \\ 
\bf (bounded open) domain & \bf problem\\[.01in]
\hline\\[.001in]
Lipschitz & Well-posedness of the nonlocal problem in $L^2(\Om)$.\\[.1in]
Class $C^1$
 & Convergence in $L^2(\Om)$ to distributional solution.\\
 (relaxation may be possible) \\[.1in] 
Class $C^2$ or convex $C^1$ & \parbox{3in}{Convergence to \emph{weak} solution of elliptic hinged or clamped problem.}\\[.2in]
Class $C^4$  & \parbox{3in}{Convergence to \emph{regular} solution of elliptic hinged or clamped problem.}
\end{tabular*}
\end{center}

\medskip


Some of the main tools used in local theories are compactness results (such Gagliardo-Nirenberg-Sobolev embedding theorems), estimates obtained through\break Poincar\'e-type inequalities, and in parabolic/elliptic  theory  one often uses the gain in the smoothness of solutions. For nonlocal problems in the case of operators with weakly-singular kernels, however, these methods do not apply. For example, solutions to the Poisson problem gain two degrees of regularity over the forcing term; however, in the nonlocal scenario, there is no improvement in regularity. A nonlocal version of Poincar\'e's inequality exists; however, it does not yield higher $L^p$ integrability for the solution from bounds of the nonlocal gradient. In addition, Sobolev embedding-type theorems do not apply, unless the kernel exhibits a strong (i.e. non-integrable) singularity. One of the key tools utilized in the nonlocal setting is a compactness result due to Bourgain, Brezis, and Mironescu \cite{Bourgain} when the kernel is ``almost-integrable". Such compactness theorems have been used and further developed by Du and Mengesha \cite{DuMengesha} (which is also an inspiration for our work), as well as by Ignat and his collaborators \cite{Ignat}. 

The contributions of this paper to the development of nonlocal theories and understanding the connections between local and nonlocal models are as follows:
\begin{itemize}

\item we study a higher order nonlocal analogue to the biharmonic operator for which we formulate nonlocal equivalents of clamped and hinged boundary conditions; we establish well-posedness of solutions to these nonlocal boundary values problems supplemented with nonlocal BCs;

\item we show $L^2$ strong convergence of the sequence of nonlocal solutions to the classical one as the radius of the support of the kernel in the integral operator goes to zero;

\item under certain assumptions on the kernel of the integral operator (which include singular and weakly singular cases) we prove several properties for the nonlocal operators such as: $L^\infty$ bound on the biharmonic, as well as Lipschitz continuity of the nonlocal Laplacian applied to a sufficiently regular function (see Proposition \ref{lapexists}, respectively, Theorem \ref{lapholder}).  
\end{itemize}

\medskip




This paper is organized as follows. The nonlocal operator framework is outlined in Section 2. 
 Section 3 addresses the connection between the local and nonlocal operators, by proving convergence (and rates of convergence results) as the size of the support of the kernel shrinks to zero. Section 4 presents the well-posedness proof for nonlocal steady state problem with hinged and clamped boundary conditions. 

\section{Background}
This section contains definitions of several fundamental integral operators and  associated function spaces that will be central to our work. Henceforth,  $\Omega$ will denote an open bounded connected subset of $\mathbb{R}^d$;  in some results, we will specialize to $d=2$ as the more interesting case due to the integrability conditions and  because of its connection to the thin plate theory. As we will see in the sequel the existence results for solutions to the nonlocal problems defined below do not require any regularity conditions on $\Om$.  However, in order to establish connections between nonlocal and classical systems, we will need to impose some smoothness conditions on the boundary. The open subdomain  $\Omega'$ will be compactly contained in $\Omega$ (see Figure \ref{domain} for an illustration of the domain $\Omega'$ with its boundary $\Omega \setminus \Omega'$). In addition, for  higher-order conditions we will also consider a subdomain $\Om''$ compactly contained in $\Om'$.

\begin{figure}
 \centering
   \begin{tikzpicture}
     \node at (0,0) {\includegraphics[width=0.5\linewidth]{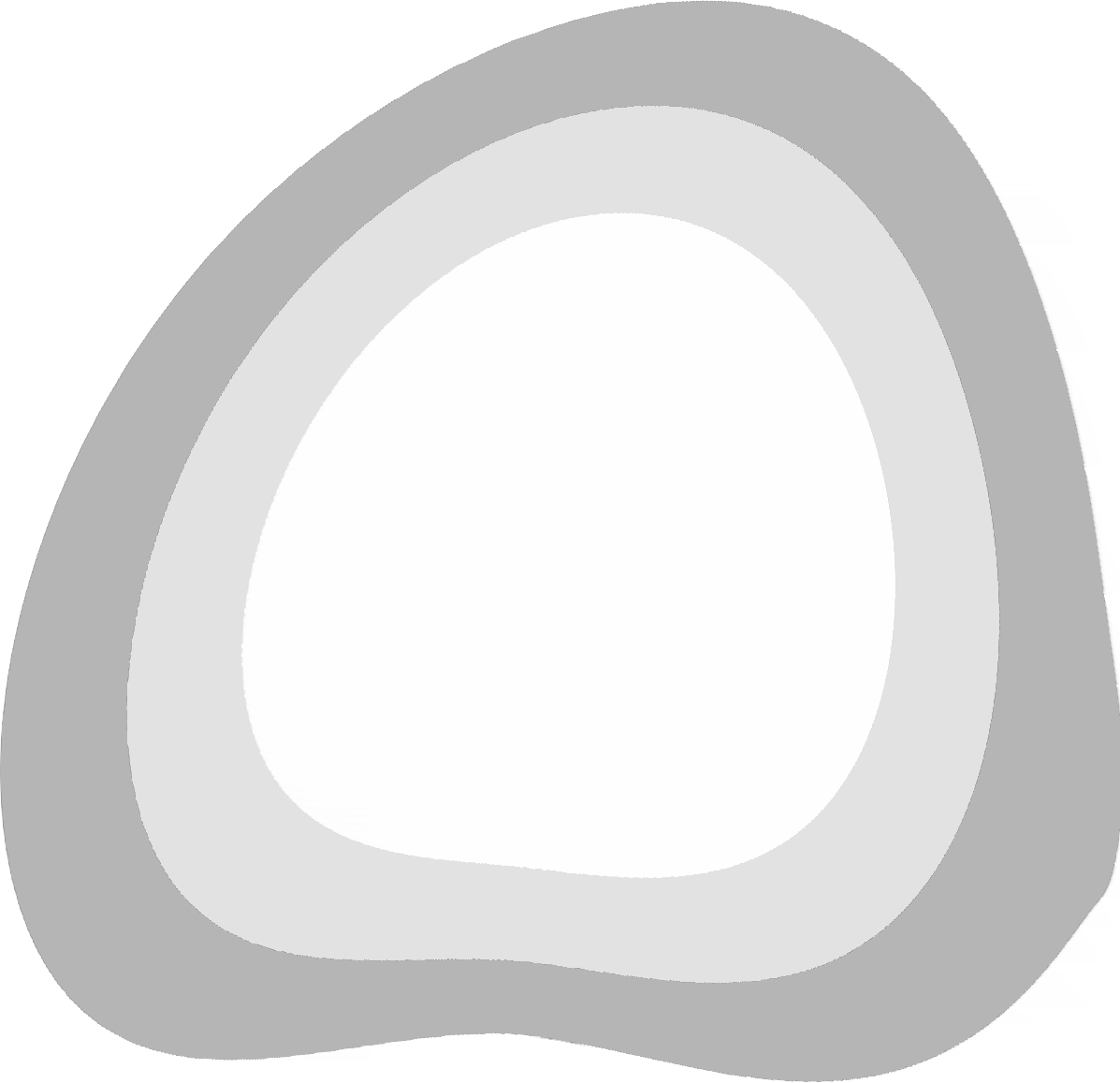}};
     \node at (0,0) {$\Omega''$};
     \node at (-1.1,-2.1) {$\Omega' \setminus \Omega''$};
     \node at (-2.3,1.7) {$\Omega$};
     \node at (2.2,-2.5) {$\Omega \setminus \Omega'$};
  \end{tikzpicture}
\caption{The total domain $\Omega$ with subdomains $\Om''\subset \Om'\subset \Om$. The  ``collar layers" are $\Omega \setminus \Omega'$ and $\Om'\setminus \Om''$.}
\label{domain}
\end{figure}

\subsection{Operators}
As in \cite{DGLZ2013}, we introduce several nonlocal peridynamic operators. 
\begin{definition}[Nonlocal divergence] For a function $\bfnu : \Omega \times \Omega \to \mathbb{R}^k$ and an antisymmetric vector-valued kernel $\bfal: \Omega \times \Omega \to \mathbb{R}^k$, the nonlocal divergence operator $\mathcal{D}_{\bfal}$ is a function-valued  mapping whose image $ \cD_\bfal[\bfnu]$ is defined by $\cD_\bfal[\bfnu] :\Omega \to \bbR$ 
\begin{equation*}
\mathcal{D}_{\bfal}[\bfnu](\bfx)\dfn  \int_{\Omega} (\bfnu(\bfx,\bfy)+\bfnu(\bfy,\bfx)) \cdot \bfal (\bfx,\bfy) d\bfy \quad \text{for}\quad \bfx\in \Om\,.
\end{equation*}
\end{definition}

\begin{definition}[Nonlocal two-point gradient] \label{def:nonlocal-grad} Given a function $u(\bfx): \mathbb{R}^d \to \mathbb{R}$, the formal adjoint of $\mathcal{D}_{\bfal}$ is the nonlocal two-point gradient operator $\cG_{\bfal}: u \mapsto \cG_{\bfal}$ where $\cG_\bfal: \Omega\times \Om \to \Omega$ is given by 
\begin{equation*}
\cG_{\bfal}[u](\bfx,\bfy) = (u(\bfy)-u(\bfx)) \bfal(\bfx,\bfy)
\quad\text{for}\quad(\bfx,\bfy) \in \Omega \times \Omega\,.
\end{equation*}
\end{definition}

\begin{definition}[Nonlocal normal derivative]  For a function $\bfnu : \Omega \times \Omega \to\mathbb{R}^k$ and an antisymmetric vector-valued function $\bfal: \Omega \times \Omega \to\mathbb{R}^k$, the nonlocal normal operator is a mapping $\mathcal{N}_{\bfal}: \bfnu \mapsto  \cN_\al[\bfnu]$ where $\mathcal{N}_\al[\bfnu] :\Om \to \bbR$ is given   by
\begin{equation}\label{def:nonlocal-normal}
\mathcal{N}_{\bfal}[\bfnu](\bfx):=  \int_{\Omega \setminus \Omega'} (\bfnu(\bfx,\bfy)+\bfnu(\bfy,\bfx)) \cdot \bfal (\bfx,\bfy) d\bfy \txtfor \bfx \in \operatorname{int}(\Omega').
\end{equation}
\end{definition}


\begin{definition}[Nonlocal Laplacian]\label{def:laplace}
Let $u: \Omega \to \mathbb{R}$ and $\mu = \bfal^2$ where $\bfal: \Omega \times \Omega \to \mathbb{R}^k$ is an antisymmetric vector-valued function. The nonlocal Laplace operator is defined by:
\begin{equation*}
\begin{split}
\cL_{\bfal}[u](\bfx) := \mathcal{D}_{\bfal} \left[ \cG_{\bfal}[u] \right] = 2 \int_{\Omega} (u(\bfy) - u(\bfx)) \mu(\bfx,\bfy) d\bfy 
\quad\text{for}\quad\bfx \in \Omega\,. 
\end{split}
\end{equation*}
\end{definition}
\noindent It was shown in \cite[Prop. 5.4]{DGLZ2013},  that if $\bfal^2$ is 
formally replaced by distributional application of 
$\frac{1}{2} \Delta_\bfy \delta(\bfy-\bfx)$, then $\cL_{\bfal}$ can be identified, in the sense of distributions, with the Laplace operator $\Delta_\bfx$. 

Following the above framework we define the nonlocal biharmonic operator:
\begin{definition}[Nonlocal biharmonic] Let $\bfal: \Omega \times \Omega \to \mathbb{R}^k$ be an antisymmetric vector valued function and $u: \Omega \to \mathbb{R}$. We define the nonlocal biharmonic by 
\begin{equation}
\mathcal{B}_{\bfal}[u](\bfx) = \cL_{\bfal}[\cL_{\bfal}[u]], \txtfor\bfx \in \Omega. 
\end{equation}
\end{definition}

\subsection{Continuity and integrability} 
Let us recall and prove several results regarding the integrability and continuity for some of the above nonlocal operators. 

First let us recall a nonlocal version of the ``integration by parts" theorem, a simple consequence of the fact that the integrand is antisymmetric:
\begin{proposition}[Nonlocal integration by parts \cite{DGLZ2013}]\label{parts}  Let $\Omega \subset \mathbb{R}^d$ be open, $u, w : \Omega \to \mathbb{R}$, $\bfal: \Omega \times \Omega \to \mathbb{R}^k$ be antisymmetric, and $\bfnu: \Omega \times \Omega \to \mathbb{R}^k$. When $\cD_{\bfal} v \in \ltom $ and $\cG_{\bfal} \in L^2(\Om\times \Om)$ we have

\begin{equation}\label{nonlocal-by-parts}
\int_{\Omega} u(\bfx) \mathcal{D}_{\bfal} [\bfnu] d\bfx = - \int_{\Omega} \int_{\Omega}   \cG_{\bfal} [u] \cdot \bfnu d\bfy d\bfx\,.
\end{equation}
As a special case, when $\mathcal{L}_{\bfal}[u], w \in L^2(\Omega)$ and $\cG_\bfal[u],\cG_{\bfal}[w] \in L^2(\Omega\times \Om)$, we have
\begin{equation}\label{nonlocal-by-parts-lap}
\int_\Omega \mathcal{L}_\bfal[u] w d \bfx = - \int_\Omega \int_\Omega \cG_\bfal[u]\cdot\cG_\bfal[w] d \bfy d \bfx.
\end{equation}
As remarked subsequently, the identity \eqref{nonlocal-by-parts-lap} by definition applies to the weak nonlocal Laplacian introduced below in Definition \ref{def:nonlocalweaklap}.
\end{proposition}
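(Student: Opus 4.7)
The plan is to exploit the antisymmetry of $\bfal$ through a Fubini-type change of variables. Starting from the left-hand side of \eqref{nonlocal-by-parts}, I would expand $\mathcal{D}_{\bfal}[\bfnu](\bfx)$ according to its definition and split the inner integrand into the two pieces $\bfnu(\bfx,\bfy)\cdot\bfal(\bfx,\bfy)$ and $\bfnu(\bfy,\bfx)\cdot\bfal(\bfx,\bfy)$, producing two double integrals over $\Omega\times\Omega$. In the second of these, I swap the names of the dummy variables $\bfx\leftrightarrow\bfy$; the antisymmetry $\bfal(\bfy,\bfx)=-\bfal(\bfx,\bfy)$ then converts it to $-\int\int u(\bfy)\bfnu(\bfx,\bfy)\cdot\bfal(\bfx,\bfy)\,d\bfx d\bfy$. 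Combining with the first piece yields
\begin{equation*}
\int_\Omega u(\bfx)\mathcal{D}_{\bfal}[\bfnu]\,d\bfx
= \int_\Omega\int_\Omega (u(\bfx)-u(\bfy))\,\bfal(\bfx,\bfy)\cdot\bfnu(\bfx,\bfy)\,d\bfy d\bfx,
\end{equation*}
which, on recognizing $(u(\bfy)-u(\bfx))\bfal(\bfx,\bfy)=\cG_{\bfal}[u](\bfx,\bfy)$ from Definition \ref{def:nonlocal-grad}, gives exactly $-\int\int \cG_{\bfal}[u]\cdot\bfnu\,d\bfy d\bfx$.

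For the Laplacian corollary \eqref{nonlocal-by-parts-lap}, I would simply specialize the identity just established with $u$ replaced by $w$ and $\bfnu$ taken to be $\cG_{\bfal}[u]$. Since $\mathcal{L}_{\bfal}[u]=\mathcal{D}_{\bfal}[\cG_{\bfal}[u]]$ by Definition \ref{def:laplace}, the left-hand side becomes $\int_\Omega w\,\mathcal{L}_{\bfal}[u]\,d\bfx$, while the right-hand side becomes $-\int_\Omega\int_\Omega \cG_{\bfal}[w]\cdot\cG_{\bfal}[u]\,d\bfy d\bfx$, as desired.

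The only real step that requires care is the application of Fubini's theorem and the variable swap, both of which demand absolute integrability of the double integrand. For the left-hand side this is immediate from Cauchy–Schwarz together with the hypothesis $\mathcal{D}_{\bfal}\bfnu\in L^2(\Omega)$ (provided $u\in L^2(\Omega)$, which is tacitly implied). For the right-hand side, the hypothesis $\cG_{\bfal}[u]\in L^2(\Omega\times\Omega)$, combined with the corresponding $L^2$ integrability of $\bfnu$ over $\Omega\times\Omega$, supplies a Cauchy–Schwarz bound on $\cG_{\bfal}[u]\cdot\bfnu$. Once absolute integrability is in hand, the swap $\bfx\leftrightarrow\bfy$ is permitted and the manipulation above is purely formal. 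I expect this integrability bookkeeping to be the most delicate—but still routine—aspect of the argument; the algebraic core reduces entirely to the antisymmetry of $\bfal$.
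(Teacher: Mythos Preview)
Your proposal is correct and is exactly the approach the paper has in mind: the paper does not give a detailed proof but only remarks that the result is ``a simple consequence of the fact that the integrand is antisymmetric,'' citing \cite{DGLZ2013}. Your variable-swap/Fubini argument is precisely the unpacking of that one-line justification.
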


\begin{remark} 
The terms in ``nonlocal" integration by parts already incorporate the information from the collar of the domain, thus boundary terms do not explicitly appear in \eqref{nonlocal-by-parts} or \eqref{nonlocal-by-parts-lap}.
\end{remark}

The next result provides us with an inequality that gives an upper bound for the nonlocal gradient in terms of its classical counterpart:

\begin{theorem}[{c.f. \cite[Thm. 1]{Bourgain}}]\label{bourgain2} 
Let $\Om$ be a bounded Lipschitz domain. 
Suppose $f \in W^{1,p}(\Omega), 1 \leq p < \infty$ and let $\xi \in L^1(\mathbb{R}^d), \xi \geq 0$. Then
\begin{equation*}
\intOm \intOm \frac{|f(\bfx)-f(\bfy)|^p}{|\bfx-\bfy|^p} \xi(\bfx-\bfy) d \bfx d \bfy \leq C \|f \|_{W^{1,p}(\Omega)}^p \| \xi \|_{L^1(\Omega)},
\end{equation*}
where $C$ depends only on $p$ and $\Omega$.
\end{theorem}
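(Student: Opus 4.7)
The plan is to obtain the estimate by combining a pointwise bound via the fundamental theorem of calculus with a change-of-variables that decouples the two integrations. First I would approximate: since $W^{1,p}(\Omega)$ functions are dense in $W^{1,p}$ by smooth ones (and both sides of the inequality are continuous in the $W^{1,p}$ topology, with $\xi$ fixed), it suffices to prove the bound for $f \in C^\infty(\overline\Omega)$, and then pass to the limit. Since $\Om$ is a bounded Lipschitz domain, I would invoke a Stein-type extension operator to produce $\tilde f \in W^{1,p}(\mathbb{R}^d)$ with $\tilde f|_\Om = f$ and $\|\tilde f\|_{W^{1,p}(\mathbb{R}^d)} \le C_\Om \|f\|_{W^{1,p}(\Om)}$; this is what allows us not to worry about the straight segment from $\bfy$ to $\bfx$ leaving $\Om$.

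Next, for smooth $\tilde f$ and $\bfx,\bfy \in \Om$, write $\tilde f(\bfx)-\tilde f(\bfy)=\int_0^1 \nabla \tilde f(\bfy+t(\bfx-\bfy))\cdot(\bfx-\bfy)\,dt$. Cauchy--Schwarz on the dot product and Jensen's inequality applied to the integral in $t\in[0,1]$ yield the key pointwise bound
\begin{equation*}
\frac{|\tilde f(\bfx)-\tilde f(\bfy)|^p}{|\bfx-\bfy|^p}\;\le\; \int_0^1 |\nabla \tilde f(\bfy+t(\bfx-\bfy))|^p\,dt.
\end{equation*}
Multiplying by $\xi(\bfx-\bfy)\ge 0$, integrating over $\Om\times\Om$, and enlarging the domain of integration to $\mathbb{R}^d\times\mathbb{R}^d$ (using nonnegativity), reduces matters to
\begin{equation*}
\int_0^1 \!\!\int_{\mathbb{R}^d}\!\!\int_{\mathbb{R}^d}\!\! \xi(\bfx-\bfy)\,|\nabla \tilde f(\bfy+t(\bfx-\bfy))|^p\,d\bfx\,d\bfy\,dt.
\end{equation*}

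For each fixed $t\in(0,1)$ perform the change of variables $\bfh=\bfx-\bfy$, $\bfz=\bfy+t\bfh$, which is a linear bijection of $\mathbb{R}^d\times\mathbb{R}^d$ with unit Jacobian. The integrand factorizes as $\xi(\bfh)\,|\nabla\tilde f(\bfz)|^p$, and Fubini gives $\|\xi\|_{L^1(\mathbb{R}^d)}\,\|\nabla\tilde f\|_{L^p(\mathbb{R}^d)}^p$ for each $t$, so the $t$-integral is harmless. Combining with the extension estimate produces the desired bound with $C=C_\Om^p$. I would note that for the bounded domain $\Om$, the vector $\bfx-\bfy$ is confined to a bounded set, so $\|\xi\|_{L^1(\Om)}$ can be freely replaced by $\|\xi\|_{L^1(\mathbb{R}^d)}$ or vice versa up to irrelevant modifications.

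The only genuine difficulty is the extension step: if $\Om$ were merely open, the segment from $\bfy$ to $\bfx$ could exit $\Om$ and the FTC argument would fail. The Lipschitz hypothesis is precisely what guarantees a bounded linear extension $W^{1,p}(\Om)\to W^{1,p}(\mathbb{R}^d)$, so this hypothesis is essential and not merely technical. Everything else is routine manipulation of integrals.
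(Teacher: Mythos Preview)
Your argument is correct and is precisely the standard proof of this estimate: extend to $\mathbb{R}^d$, use the fundamental theorem of calculus together with Jensen to bound the difference quotient by an averaged gradient, then decouple via the linear change of variables $(\bfx,\bfy)\mapsto(\bfh,\bfz)$. The paper does not supply its own proof of this theorem; it simply quotes the result from Bourgain--Brezis--Mironescu and adds a remark that the only use of domain regularity in that proof is to guarantee a bounded $W^{1,p}$ extension, so a bounded Lipschitz domain suffices. That observation is exactly what you single out as ``the only genuine difficulty,'' so your write-up and the paper's remark are in complete agreement.

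One minor point: your argument naturally produces $\|\xi\|_{L^1(\mathbb{R}^d)}$ on the right-hand side rather than the $\|\xi\|_{L^1(\Omega)}$ appearing in the displayed statement; the latter is almost certainly a typographical slip in the paper (the original Bourgain--Brezis--Mironescu statement has $\|\xi\|_{L^1(\mathbb{R}^d)}$), and in any case the applications in this paper only use compactly supported $\xi=\rho_\delta$, for which the distinction is immaterial.
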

\begin{remark}
The result \cite[Thm. 1]{Bourgain} focuses on smooth domains, but the proof only requires the function in question to have a  $W^{1,p}(\Rn)$ extension. Thus it suffices for $\Om$ to satisfy a strong locally Lipschitz condition, or equivalently be bounded and Lipschitz  (see, for example, \cite[p. 83 and Thm. 5.24 on p. 154]{adams}).
\end{remark}


%

We will need the following assumption, first introduced in \cite{Bourgain}, on the family of kernels used in our nonlocal formulations.
\begin{assumption}[Kernel $\bfal$]\label{as:alpha}
For $\del>0$ let $\rho_\delta$ be a radial compactly-supported mollifier satisfying
\begin{equation}\label{mollifier}
\rho_\del: C^\infty(\bbR^+;\bbR^+),\quad
\int_{\bbR^d} \rho_\del(|\bfx|) d\bfx =1, \quad \supp(\rho_\del)\subset [0,\del)\,.
\end{equation}
Define
\begin{equation}\label{def:alpha}
\bfal_\delta(\bfx,\bfy) := \frac{\sqrt{\rho_\delta(|\bfx-\bfy|)}}{|\bfx-\bfy|^{2}} (\bfx-\bfy)\,.
\end{equation}
\end{assumption}
Henceforth, when $\delta$ is held constant, we will often drop the subscript denoting
\[
\bfal \dfn \bfal_{\del}.
\]

With Assumption \ref{as:alpha} placed on $\bfal$, we will study the conditions that must be placed on the function $u$ so that $\cL_\bfal[u] \in L^2(\Omega)$ and $\mathcal{B}_{\bfal}[u] \in L^2(\Omega)$.  In particular, note that $\cL_\bfal$ is formally quadratic in $\bfal$ which means that the kernel $\mu$  in Definition \ref{def:laplace} would not be integrable for domains $\Om \subset \bbR^2$. The next few propositions will provide sufficient conditions which ensure that these functions are well-defined.

\begin{proposition}\label{cts} Suppose $\rho \in C^\infty_c(\mathbb{R}^d)$ and $d \geq 2$. If $\Omega \subset \mathbb{R}^d$ is bounded and $a>0$, then  the following mapping is continuous on $\bbR^d$:
\[ 
F(\bfx) \dfn \intOm \frac{\rho(|\bfy-\bfx|)}{|\bfy-\bfx|^{2-a}} d \bfy\,.
\] 
\end{proposition}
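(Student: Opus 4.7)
The plan is to express $F$ as a convolution and reduce continuity to the standard fact that translation is continuous in $L^1$. Concretely, after the change of variables $\bfz = \bfy - \bfx$, one can write
\[
F(\bfx) = \int_{\mathbb{R}^d} \chi_\Omega(\bfy)\, h(\bfy-\bfx)\, d\bfy = (\chi_\Omega * h)(\bfx),
\]
where $h(\bfz) \dfn \rho(|\bfz|)/|\bfz|^{2-a}$ is radial (hence even, so the direction of convolution is immaterial).

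The first step is to verify $h \in L^1(\mathbb{R}^d)$. Using polar coordinates and the fact that $\rho \in C^\infty_c(\mathbb{R}^+)$,
\[
\int_{\mathbb{R}^d} |h(\bfz)|\, d\bfz = \omega_{d-1} \int_0^\infty \rho(r)\, r^{d-3+a}\, dr,
\]
where $\omega_{d-1}$ is the surface measure of the unit sphere. Since $\rho$ is compactly supported and bounded, this integral is finite provided $d-3+a > -1$, i.e., $d + a > 2$, which is ensured by the hypotheses $d \geq 2$ and $a > 0$. In particular $F$ is pointwise well-defined and finite on $\mathbb{R}^d$.

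The second step invokes the standard continuity-under-translation property in $L^1$: the map $\bfx \mapsto \tau_\bfx h$, with $\tau_\bfx h(\bfy) \dfn h(\bfy-\bfx)$, is continuous from $\mathbb{R}^d$ into $L^1(\mathbb{R}^d)$. Since $\chi_\Omega \in L^\infty(\mathbb{R}^d)$ with $\|\chi_\Omega\|_\infty = 1$,
\[
|F(\bfx) - F(\bfx')| \leq \|\chi_\Omega\|_\infty \, \|\tau_\bfx h - \tau_{\bfx'} h\|_{L^1(\mathbb{R}^d)} \longrightarrow 0 \txtas \bfx' \to \bfx,
\]
which yields continuity (in fact, uniform continuity) of $F$ on all of $\mathbb{R}^d$.

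The only substantive obstacle is the integrability check in the first step, which is exactly where the hypotheses $d \geq 2$ and $a > 0$ are used; once $h$ is known to be in $L^1$, everything else is the convolution/translation machinery. An alternative route would be direct dominated convergence after splitting $\Omega$ into a small ball around $\bfx_0$ and its complement, using the uniform bound $|\rho(|\bfy-\bfx|)/|\bfy-\bfx|^{2-a}| \lesssim |\bfy-\bfx|^{a-2}$ and absolute continuity of the integral near the singularity; but the convolution framing is cleaner and automatically furnishes continuity at every point of $\mathbb{R}^d$, including points outside $\overline{\Omega}$.
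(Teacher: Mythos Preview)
Your proof is correct and takes a genuinely different route from the paper. The paper argues directly in $\eps$--$\del$ fashion: fix $\bfx$, use local integrability of $|\bfy-\bfx|^{-(2-a)}$ to choose a small ball $B(\bfx,r)$ on which the singular part contributes at most $\eps/3$, then use continuity of $\kap(\bfx,\bfy)=\rho(|\bfy-\bfx|)/|\bfy-\bfx|^{2-a}$ away from the diagonal to control the complementary piece. In other words, the paper carries out exactly the ``splitting'' argument you mention as an alternative. Your convolution framing is cleaner: once you observe $h\in L^1(\bbR^d)$ (which is precisely where $d\geq 2$ and $a>0$ enter, via the exponent $d-3+a>-1$), the standard $L^1$-continuity of translation yields not just continuity but \emph{uniform} continuity of $F$ on all of $\bbR^d$, with no need to treat the cases $a\geq 2$ and $0<a<2$ separately. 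The paper's hands-on argument, by contrast, makes the role of the singularity more explicit and requires nothing beyond elementary estimates, which may be pedagogically useful; but your approach is shorter and delivers a slightly stronger conclusion.
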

\begin{proof}
Fix $\bfx \in \mathbb{R}^d$. Note that the result clearly holds for $a\geq 2$, so assume $0<a<2$. Since $|\bfy-\bfx|^{-(2-a)} \in L^1(\mathbb{R}^d)$ for $a>0$ and $d \geq 2$, there exists an $r > 0$ such that
\begin{equation}\label{aux1}
\int_{B(\bfx,r)} \frac{1}{|\bfy-\bfx|^{2-a}} d \bfy < \frac{\eps}{3 \| \rho \|_{L^\infty} }\,.
\end{equation}
Since $\rho$ is smooth,
\[
\bfy\mapsto \kap(\bfx,\bfy)\dfn \frac{\rho(|\bfy-\bfx|)}{|\bfy-\bfx|^{2-a}} \in C\left(\mathbb{R}^d \setminus B \left(\bfx, \frac{r}{2} \right)\right).
\] 
Thus there exists a $\delta \in \left( 0 , \frac{r}{2} \right)$ such that whenever $\bfx' \in B(\bfx, \delta)$ we have 
\[
\left|
\kap(\bfx,\bfy) - \kap(\bfx',\bfy)\right| < \frac{\eps}{3 |\Omega| }
\quad\text{for all}\quad \bfy \in \Omega \setminus B(\bfx,r)\,.
\] 
Then
\begin{equation*}
\begin{split}
|F(\bfx)-F(\bfx')| =& \left| \intOm
\big[\kap(\bfx,\bfy) - \kap(\bfx',\bfy)\big] d \bfy \right| \\
\leq& \left| \int_{\Omega \setminus B(\bfx, \delta)} \big[\kap(\bfx,\bfy) - \kap(\bfx',\bfy)\big]  d \bfy \right| \\
&+ \|\rho\|_{L^\infty}\int_{B(\bfx, \delta)} \frac{1}{|\bfy-\bfx|^{2-a}} d \bfy + \|\rho\|_{L^\infty} \int_{B(\bfx, \delta)} \frac{1}{|\bfy-\bfx'|^{2-a}} d \bfy \\
\stackrel{\eqref{aux1}}{\leq}& \, \frac{\eps}{3 |\Omega|} \int_{\Omega \setminus B(\bfx,\delta)} d \bfy 
+ \|\rho\|_{L^\infty} \frac{\eps}{3 \|\rho\|_{L^\infty}} 
+ \|\rho\|_{L^\infty} \int_{B(\bfx', \delta)} \frac{1}{|\bfy-\bfx'|^{2-a}} d \bfy\\
\stackrel{\eqref{aux1}}{\leq}& \,\, \eps.
\end{split} 
\end{equation*}
The second to last line follows by noting that integration of $\frac{1}{|\bfy-\bfx'|^{2-a}}$ in terms of $\bfy$ over $B(\bfx', \delta)$ yields a larger value than integration over $B(\bfx,\delta)$, since the integrand is singular at $\bfy =\bfx'$.
It follows that  $F \in C(\mathbb{R}^d)$. 
\end{proof}

\begin{proposition}[$L^2$-integrability of the nonlocal Laplacian]\label{lapexists}
Suppose $\bfal$ satisfies Assumption \ref{as:alpha} and $d \geq 2$. If $\Omega \subset \mathbb{R}^d$ is a bounded Lipschitz domain and $u \in W^{1,p}(\Omega)$  with $p>2$ then $\cL_{\bfal}[u] \in L^2(\Omega)$.
\end{proposition}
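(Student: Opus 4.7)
The plan is to apply H\"older's inequality pointwise in $\bfx$ with conjugate exponents $p$ and $p' = p/(p-1)$, so that after squaring and integrating, one factor becomes controllable by the Bourgain--Brezis--Mironescu bound of Theorem~\ref{bourgain2} and the other reduces to an integral covered by Proposition~\ref{cts}. Using $\mu = |\bfal|^2 = \rho_\del(|\bfx-\bfy|)/|\bfx-\bfy|^2$, I first rewrite
\[
\cL_\bfal[u](\bfx) = 2\intOm (u(\bfy)-u(\bfx))\,\frac{\rho_\del(|\bfx-\bfy|)}{|\bfx-\bfy|^2}\,d\bfy,
\]
split the kernel as $\rho_\del/|\bfx-\bfy|^2 = (\rho_\del^{1/p}/|\bfx-\bfy|)(\rho_\del^{1/p'}/|\bfx-\bfy|)$, and apply H\"older to bound $|\cL_\bfal[u](\bfx)| \leq 2\, A(\bfx)^{1/p}\, B(\bfx)^{1/p'}$, where
\[
A(\bfx) = \intOm \frac{|u(\bfy)-u(\bfx)|^p\, \rho_\del(|\bfx-\bfy|)}{|\bfx-\bfy|^p}\,d\bfy, \qquad B(\bfx) = \intOm \frac{\rho_\del(|\bfx-\bfy|)}{|\bfx-\bfy|^{p'}}\,d\bfy.
\]

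Next, I would observe that the hypothesis $p > 2$ forces $p' < 2$, so $a := 2 - p' > 0$, and Proposition~\ref{cts} yields that $B$ is continuous on $\bbR^d$, hence uniformly bounded on $\cl{\Om}$ by some constant $C_1 = C_1(\del, p)$. Squaring and integrating the pointwise H\"older estimate gives
\[
\|\cL_\bfal[u]\|_{L^2(\Om)}^2 \leq 4\, C_1^{2/p'}\intOm A(\bfx)^{2/p}\,d\bfx.
\]
Since $2/p < 1$, a further application of H\"older's inequality with exponent $p/2 > 1$ bounds this by $4\, C_1^{2/p'}\,|\Om|^{1-2/p}\bigl(\intOm A(\bfx)\,d\bfx\bigr)^{2/p}$, and the remaining double integral $\intOm A(\bfx)\, d\bfx$ is exactly of the form controlled by Theorem~\ref{bourgain2} with $\xi = \rho_\del$. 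Using the normalization~\eqref{mollifier} ($\|\rho_\del\|_{L^1(\bbR^d)} = 1$) then yields the desired bound $\|\cL_\bfal[u]\|_{L^2(\Om)} \leq C\,\|u\|_{W^{1,p}(\Om)}$, which establishes the claim.

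The main obstacle is engineering the H\"older split so that the two resulting integrals match \emph{exactly} the hypotheses of Theorem~\ref{bourgain2} and Proposition~\ref{cts}: the weight $\rho_\del$ must be shared symmetrically between them via the factors $\rho_\del^{1/p}$ and $\rho_\del^{1/p'}$, while the singular factor $|\bfx-\bfy|^{-2}$ is split into two factors of $|\bfx-\bfy|^{-1}$. The strict inequality $p > 2$ is decisive at two distinct points: it provides $p' < 2 \leq d$, which delivers the local integrability required for the uniform bound on $B(\bfx)$, and it simultaneously yields $2/p < 1$, which is precisely what enables the second H\"older/Jensen step on $A(\bfx)^{2/p}$. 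The borderline case $p = 2$ would break both of these estimates, which is why the strict exponent assumption appears in the hypothesis.
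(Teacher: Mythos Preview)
Your proof is correct and follows essentially the same strategy as the paper: a H\"older split of the integrand with exponents $p$ and $p'$, control of the ``difference-quotient'' factor via Theorem~\ref{bourgain2}, and control of the kernel factor via Proposition~\ref{cts}, followed by a second H\"older (in $\bfx$) to reconcile the $L^2$ norm with the $L^p$ estimate. The only cosmetic difference is that you distribute $\rho_\del$ between the two factors as $\rho_\del^{1/p}\cdot\rho_\del^{1/p'}$ and then invoke Theorem~\ref{bourgain2} with $\xi=\rho_\del$, whereas the paper keeps all of $\rho_\del$ on the kernel side and applies Theorem~\ref{bourgain2} with $\xi=\chi_\Om$; your version is arguably tidier since it lets you bound $B(\bfx)$ uniformly before the second H\"older step rather than carrying it through.
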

\begin{proof}
For a shorthand let
\[
 \kap(\bfx,\bfy) \dfn \frac{\rho(|\bfx-\bfy|)}{|\bfy-\bfx|} \txtand
 q(\bfx,\bfy)\dfn \frac{u(\bfy)-\bf(x)}{|\bfy-\bfx|}.
\]
Also, henceforth, if $\psi(\bfx,\bfy)$  is a function on $\Om\times \Om$ then $L^q(\Om, \bfx)$ or $L^q(\Om, \bfy)$  will denote the $L^q$ norm of the $\bfy$- or $\bfx$-section of $\psi$ respectively. 

By H\"{o}lder's inequality, for any $\eps\in(0,1)$ we have

\begin{equation*}
\begin{split}
\|\cL_\bfal[\phi]\|_{L^2}^2 &= \intOm \left| \intOm \frac{u(\bfy)-u(\bfx)}{|\bfy-\bfx|^2} \rho(\bfx,\bfy) d \bfy \right|^2 d \bfx \\
&\leq \intOm \left( \left\| q(\bfx,\bfy)\right\|_{L^{p}(\Omega,\bfy)} \left\| \kap(\bfx,\bfy) \right\|_{L^{p^*}(\Omega,\bfy)} \right)^2 d \bfx\,.
\end{split}
\end{equation*}
where $p = \frac{2-\varepsilon}{1-\varepsilon}$ and $p^*$ is the H\"{o}lder conjugate of $p$.

Another application of H\"{o}lder's inequality to the integral in $\bfx$ yields: 
\begin{equation*}
\begin{split}
\|\cL_\bfal[u]\|_{L^2}^2 &\leq \intOm   \left\|  q(\bfx,\bfy) \right\|^2_{L^{p}(\Omega,\bfy)}   \left\| \kap(\bfx,\bfy) \right\|^2_{L^{p^*}(\Omega,\bfy)}    d \bfx \\
&\leq \left\| \;  \left\| q(\bfx,\bfy) \right\|^2_{L^{p}(\Omega,\bfy)}  \right\|_{L^{\frac{p}{2}}(\Omega,\bfx)} 
\left\| \;  \left\| \kap(\bfx,\bfy) \right\|^2_{L^{p^*}(\Omega,\bfy)} \right\|_{L^{\frac{p^*}{\eps}}(\Omega,\bfx)} \\
&= \left( \intOm \intOm |q(\bfx,\bfy)|^{p} d \bfy d\bfx \right)^{\frac{2}{p}} \left[ \intOm \left( \intOm \kap(\bfx,\bfy)^{p^*} d\bfy \right)^{\frac{2}{\eps}} d \bfx \right]^{\frac{\eps}{p^*}}.
\end{split}
\end{equation*}
Applying Theorem \ref{bourgain2}  (for instance with $\xi = \chi_{\Om}$) to the first integral factor to obtain: 
\begin{equation*}
\begin{split}
\|\cL_\bfal[\phi]\|_{\ltom}^2 & \lesssim \left(  \| u \|_{W^{1,p}}^p \right)^{\frac{2}{p}} \bigg[ \intOm \left( \intOm \bigg( \frac{\rho(|\bfx-\bfy|)}{|\bfy-\bfx|} \bigg)^{p^*} d\bfy \right)^{\frac{2}{\eps}} d \bfx \bigg]^{\frac{\eps}{p^*}}.
\end{split}
\end{equation*}
Proposition \ref{cts} implies that 
\[
\intOm \left( \frac{\rho(|\bfx-\bfy|)}{|\bfy-\bfx|} \right)^{p^*} d\bfy  
\]
is bounded on $\overline{\Omega}$ (notice $p^* = 2-\varepsilon$); consequently, $\| \cL_\bfal[u] \|_{L^2(\Omega)} < \infty$ provided $u \in W^{1,p}(\Omega)$. As $\eps\searrow 0$ the integrability index $p$ tends to $2$ from above, hence the condition $p>2$.


\end{proof}

\begin{theorem}[Lipschitz continuity of the nonlocal Laplacian]\label{lapholder} 
Suppose $\bfal$ satisfies Assumption \ref{as:alpha}, $\Omega \subset \mathbb{R}^d$ is a bounded open set, and $d \geq 2$. If $u \in C^2(\Omega)\cap W^{2,\infty}(\Omega)$, then $\bfx \to \cL_\bfal[u](\bfx)$ is Lipschitz continuous. 
\end{theorem}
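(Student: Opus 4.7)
The plan is to peel off the first-order Taylor term of $u$ at $\bfx$ so that the remaining integrand is mild enough to be differentiated under the integral sign, while the extracted ``principal'' term is handled by exploiting the oddness of $\bfh \mapsto \bfh K(\bfh)$, where $K(\bfh) := \rho_\delta(|\bfh|)/|\bfh|^2$. Writing
\[
u(\bfy) - u(\bfx) = \nabla u(\bfx) \cdot (\bfy - \bfx) + R(\bfx, \bfy)
\]
with $|R(\bfx,\bfy)| \leq \tfrac{1}{2}\|\nabla^2 u\|_\infty |\bfy-\bfx|^2$ and $|\nabla_\bfx R(\bfx,\bfy)| \leq \|\nabla^2 u\|_\infty |\bfy-\bfx|$, one decomposes
\[
\cL_\bfal[u](\bfx) = 2\, \nabla u(\bfx) \cdot \bfJ(\bfx) + 2\, E(\bfx),
\]
where $\bfJ(\bfx) := \int_\Omega (\bfy-\bfx)K(\bfy-\bfx)\,d\bfy$ and $E(\bfx) := \int_\Omega R(\bfx, \bfy) K(\bfy-\bfx)\,d\bfy$. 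Both integrals converge absolutely because $\rho_\delta(|\bfh|)/|\bfh| \in L^1(B(\bfzer,\delta))$ for $d \geq 2$.

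For the remainder $E$, I would differentiate under the integral sign. Using the bounds above together with $|\nabla K(\bfh)| \lesssim \rho_\delta(|\bfh|)/|\bfh|^3 + |\rho_\delta'(|\bfh|)|/|\bfh|^2$, the two pieces of $\nabla_\bfx[R K]$ combine to
\[
|\nabla_\bfx [R(\bfx,\bfy) K(\bfy-\bfx)]| \lesssim \rho_\delta(|\bfy-\bfx|)/|\bfy-\bfx| + |\rho_\delta'(|\bfy-\bfx|)|,
\]
which is in $L^1$ over $B(\bfx,\delta)$ for $d \geq 2$. Hence $\nabla_\bfx E$ is uniformly bounded, and $E$ is Lipschitz.

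The main work is the Lipschitz dependence of $\bfJ$, since $\nabla_\bfh[\bfh K(\bfh)] \sim |\bfh|^{-2}$ fails to be locally integrable in dimension $d=2$, ruling out naive differentiation. For $\bfx_1,\bfx_2\in\Omega$ with $|\bfx_1-\bfx_2| \ll \delta$, I would partition $\Omega$ by the balls $B(\bfx_i,\delta)$ and estimate $\bfJ(\bfx_1) - \bfJ(\bfx_2)$ on each piece. On the two ``crescents'' $B(\bfx_i,\delta)\setminus B(\bfx_{3-i},\delta)$, the argument $\bfh = \bfy-\bfx_i$ lies in a shell of thickness $|\bfx_1-\bfx_2|$ near $|\bfh|=\delta$, on which $\Phi(\bfh) := \bfh K(\bfh)$ is uniformly bounded; each crescent has measure $O(\delta^{d-1}|\bfx_1-\bfx_2|)$, so those contribute $O(|\bfx_1-\bfx_2|)$. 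On the lens $B(\bfx_1,\delta)\cap B(\bfx_2,\delta)\cap \Omega$, which (modulo boundary intersections with $\partial\Omega$) is symmetric about the midpoint $\bfm := \tfrac12(\bfx_1+\bfx_2)$, the oddness of $\Phi$ about the center of that symmetry provides the cancellation needed to turn the otherwise logarithmic mean-value bound into an $O(|\bfx_1-\bfx_2|)$ estimate.

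Assembling the pieces, one writes
\[
\cL_\bfal[u](\bfx_1) - \cL_\bfal[u](\bfx_2) = 2[\nabla u(\bfx_1) - \nabla u(\bfx_2)]\cdot \bfJ(\bfx_1) + 2\nabla u(\bfx_2)\cdot [\bfJ(\bfx_1) - \bfJ(\bfx_2)] + 2[E(\bfx_1) - E(\bfx_2)],
\]
with the first piece $O(|\bfx_1-\bfx_2|)$ by the $W^{2,\infty}$ Lipschitz bound on $\nabla u$ and the uniform boundedness of $\bfJ$, and the other two by the preceding estimates. The main obstacle is the lens-region analysis for $\bfJ$: the antisymmetric cancellation of $\Phi$ about $\bfm$ is essential to avoid a logarithmic loss in $d=2$, and any contribution from the lens meeting $\partial\Omega$ needs to be absorbed into the crescent-type estimates.
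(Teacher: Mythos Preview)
Your decomposition works for the $E$-term, but the lens argument for $\bfJ$ contains a genuine misstep, and the whole detour is avoidable.

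The paper bypasses the $\bfJ$/$E$ splitting entirely. Instead of the first-order Taylor expansion at $\bfx$, it uses the exact integral mean-value form
\[
u(\bfy) - u(\bfx) = A(\bfx,\bfy)\cdot(\bfy-\bfx), \qquad A(\bfx,\bfy) := \int_0^1 \nabla u\bigl(\lambda\bfy + (1-\lambda)\bfx\bigr)\,d\lambda,
\]
which leaves no remainder. After the change of variable $\bfz = \bfy - \bfx$ (respectively $\bfz = \bfy - \bfx'$), both integrals live on the \emph{same} ball $B_\delta(\bfzer)$ and the difference is
\[
\cL_\bfal[u](\bfx) - \cL_\bfal[u](\bfx') = \int_{B_\delta(\bfzer)} \bigl(A(\bfx,\bfx+\bfz) - A(\bfx',\bfx'+\bfz)\bigr)\cdot \frac{\bfz}{|\bfz|^{2}}\,\rho(|\bfz|)\,d\bfz.
\]
Since $u\in W^{2,\infty}$ makes $A$ globally Lipschitz, the integrand is bounded by $M|\bfx-\bfx'|\,\rho(|\bfz|)/|\bfz|$, integrable for $d\ge 2$, and the Lipschitz bound follows in one line. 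The translation to a common domain is the key trick you are missing.

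In your approach the term $\bfJ(\bfx)$ is actually identically zero whenever $B_\delta(\bfx)\subset\Omega$: simply $\bfJ(\bfx)=\int_{B_\delta(\bfzer)} \bfh K(\bfh)\,d\bfh = 0$ by oddness of $\bfh K(\bfh)$ about the origin. No crescent/lens machinery is needed. The cancellation you propose on the lens is incorrect as stated: under the reflection $\bfy\mapsto 2\bfm - \bfy$ one has $\Phi(\bfy'-\bfx_1)-\Phi(\bfy'-\bfx_2) = \Phi(\bfy-\bfx_1)-\Phi(\bfy-\bfx_2)$, so the lens integrand is \emph{even} about $\bfm$ and the symmetry yields nothing. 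A direct mean-value estimate on the lens would indeed incur a logarithmic loss in $d=2$; what actually saves you is that each piece $\int_L \Phi(\bfy-\bfx_i)\,d\bfy$ equals the vanishing full-ball integral minus an $O(|\bfx_1-\bfx_2|)$ crescent term. So your route is repairable, but once you notice $\bfJ\equiv 0$ (or adopt the paper's translation trick) the whole analysis collapses to the $E$-estimate.

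Both arguments tacitly assume $B_\delta(\bfx)\subset\Omega$; the paper's proof even slips in the phrase ``support compactly contained in $\Omega$.'' The behavior near $\partial\Omega$ for a merely bounded open $\Omega$ is a separate issue that neither argument addresses.
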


\proof
From the assumption
$u \in C^2(\Omega)\cap W^{2,\infty}(\Omega)$
 it follows that $u$ is Lipschitz on $\Om$, hence $y\mapsto (u(\bfy)-\bfy(x))/|\bfy-\bfx|^2$ is integrable on $\Om$, provided the space dimension is $d \geq 2$. 
Use the fact that a fortiori $u \in C^1(\Omega)$ with support compactly contained in $\Omega$ to define
\begin{equation*}
 A(\bfx,\bfy)\dfn 
 \int_0^1 \grad u(\lambda \bfy +(1-\lambda)\bfx)d \lambda .
\end{equation*}
From the fundamental theorem of calculus we have
\begin{equation}\label{eqn:ftc}
u(\bfy)-u(\bfx)= \int_0^1 \grad u(\lambda \bfy +(1-\lambda)\bfx) \cdot (\bfy-\bfx) d \lambda = A(\bfx,\bfy) \cdot (\bfy-\bfx)
\end{equation}
Recalling the definition of the nonlocal Laplacian we have
\begin{equation*}
\begin{split}
\cL_\bfal[u](\bfx)-\cL_\bfal[u](\bfx') =  \intOm \frac{(u(\bfy)-u(\bfx))\rho(|\bfy-\bfx|)}{|\bfy-\bfx|^2}- \frac{(u(\bfy)-u(\bfx'))\rho(|\bfy-\bfx'|)}{|\bfy-\bfx'|^2} d \bfy \\
\end{split}
\end{equation*}
Utilizing equation \eqref{eqn:ftc} results in
\begin{equation*}
\begin{split}
&\cL_\bfal[u](\bfx)-\cL_\bfal[u](\bfx') = \\
=&  \intOm \frac{A(\bfx,\bfy)\cdot(\bfy-\bfx)}{|\bfy-\bfx|^{2}}  \rho(|\bfx-\bfy|)- \frac{A(\bfx',\bfy)\cdot(\bfy-\bfx')}{|\bfy-\bfx'|^{2}}  \rho(|\bfx'-\bfy|) d \bfy  \\ 
=& \int_{B_{\delta}(\bfx)} \frac{A(\bfx,\bfy)\cdot(\bfy-\bfx)}{|\bfy-\bfx|^{2}}  \rho(|\bfx-\bfy|)d \bfy 
-\int_{B_{\delta}(\bfx')}  \frac{A(\bfx',\bfy)\cdot(\bfy-\bfx')}{|\bfy-\bfx'|^{2}}  \rho(|\bfx'-\bfy|)  d\bfy.
\end{split}
\end{equation*}
Use the fact that $\supp(\rho_\del)\subset [0,\del)$ and change variables so the center of each of the two balls is at the origin to get
\begin{equation}\label{eqn:lapineq}
\cL_\bfal[u](\bfx)-\cL_\bfal[u](\bfx') = \int_{B_{\delta}(0)}\frac{(A(\bfx,\bfx+\bfz)-A(\bfx',\bfx'+\bfz))\cdot \bfz}{|\bfz|^{2}}  \rho(|\bfz|) d\bfz
\end{equation}

Due to the assumption $u \in C^2(\Omega) \cap W^{2,\infty}(\Omega)$, we know $A\in C^1(\Omega\times \Omega)\cap W^{1,\infty}(\Omega \times \Omega)$. It follows that
\[ 
|A(\bfx,\bfx-\bfz)-A(\bfx',\bfx'+\bfz)| \leq M|\bfx-\bfx'|
\]
for some constant $M$ dependent on the $L^\infty$-norm of the second-order derivatives of $u$. Combining this with \eqref{eqn:lapineq} results in
 
\begin{equation*}
\begin{split}
\left| \mathcal{L}_{\bfal}[u](\bfx)-\mathcal{L}_{\bfal}[u](\bfx') \right| \leq  M|\bfx-\bfx'| \int_{B_\delta(0)} \frac{\rho(|\bfz|)}{|\bfz|} d \bfz = N |\bfx-\bfx'|. 
\end{split}
\end{equation*}
where $\displaystyle N = M \int_{B_\delta(0)} \frac{ \rho(|\bfz|)}{|\bfz|} d \bfz < \infty$ is a constant.
\qed

\begin{proposition}[Regularity of the nonlocal biharmonic]\label{ctslap}
Let $\eps \in (0,1)$, $d \geq 2$ and $\Om\subset \bbR^d$ be a bounded open set. Suppose $u \in W^{2,\infty}(\Omega) \cap C^2(\Omega)$. Furthermore, let $\rho \in C^\infty_c(\mathbb{R}^+)$. Then $\mathcal{B}_\bfal[u] \in L^\infty(\Omega)$ (hence in $\ltom$). 
\end{proposition}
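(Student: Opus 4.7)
The plan is to iterate the regularity results already in hand: by the assumption $u\in C^2(\Omega)\cap W^{2,\infty}(\Omega)$, Theorem \ref{lapholder} tells us that $v\dfn\cL_\bfal[u]$ is Lipschitz on $\Omega$ with some constant $L$. It then suffices to show that $\cL_\bfal$ maps Lipschitz functions into $L^\infty(\Om)$, because $\cB_\bfal[u]=\cL_\bfal[v]$ and boundedness of $\Om$ yields $L^\infty(\Om)\subset L^2(\Om)$.

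For the second step I would simply use the Lipschitz bound inside the integral. Unwinding the definition,
\[
|\cB_\bfal[u](\bfx)|=|\cL_\bfal[v](\bfx)|
=\left|2\intOm\frac{(v(\bfy)-v(\bfx))\,\rho_\del(|\bfx-\bfy|)}{|\bfx-\bfy|^{2}}\,d\bfy\right|
\leq 2L\intOm\frac{\rho_\del(|\bfx-\bfy|)}{|\bfx-\bfy|}\,d\bfy.
\]
This cancels one power of the singular denominator, leaving an integral kernel of the form treated by Proposition \ref{cts} with $a=1$ and $d\geq 2$. That proposition delivers continuity on $\bbR^d$ of
\[
F(\bfx)=\intOm\frac{\rho_\del(|\bfy-\bfx|)}{|\bfy-\bfx|}\,d\bfy,
\]
and the compact support of $\rho_\del$ together with boundedness of $\Om$ gives a uniform bound for $F$ on $\Om$. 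Thus $\|\cB_\bfal[u]\|_{L^\infty(\Om)}\leq 2L\,\|F\|_{L^\infty(\Om)}<\infty$, and the final inclusion in $L^2(\Om)$ is automatic.

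There is no serious obstacle here: the entire argument is essentially ``compose Theorem \ref{lapholder} with Proposition \ref{cts}.'' The only subtlety worth flagging is making sure the Lipschitz bound on $v=\cL_\bfal[u]$ is valid \emph{on all of $\Om$} (not only on a compact subset where $u$ is smooth), which is exactly what the hypothesis $u\in W^{2,\infty}(\Om)\cap C^2(\Om)$ guarantees via Theorem \ref{lapholder}.
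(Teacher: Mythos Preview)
Your proposal is correct and follows essentially the same route as the paper: invoke Theorem~\ref{lapholder} to get Lipschitz continuity of $\cL_\bfal[u]$, absorb one power of $|\bfx-\bfy|$ from the denominator, and observe that the remaining kernel $\rho_\del(|\bfx-\bfy|)/|\bfx-\bfy|$ is uniformly integrable. The only difference is that you explicitly cite Proposition~\ref{cts} to justify the uniform bound on $F$, whereas the paper simply asserts the bound; otherwise the arguments are identical.
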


\proof
Notice for $\bfx \in \Omega$ we have
\[
\mathcal{B}_\bfal[u](\bfx) = \intOm \frac{(\cL_\bfal[u] (\bfx')-\cL_{\bfal}[u](\bfx))\rho(|\bfx'-\bfx|)}{|\bfx' - \bfx|^2} d \bfx'.
\]

By appealing to Theorem \ref{lapholder} we know $\cL_\bfal[u]$ is Lipschitz continuous on $\Omega$, and consequently there exists a constant $M$ such that
\[
|\mathcal{B}_\bfal [u](\bfx)| \leq M \intOm \frac{\rho(|\bfx'-\bfx|)}{|\bfx' - \bfx|} d \bfx' < \infty.
\]
Since $\Omega \subset \mathbb{R}^2$ is bounded we conclude $|\mathcal{B}[u](\bfx)|$ is essentially bounded on $\Omega$. 

\qed

\section{Nonlocal function spaces} This section will introduce various Hilbert spaces we will be working in for the formulation of our nonlocal problems later in the paper.
Following \cite{DuMengesha} we will utilize the functional space  
\begin{equation}
\mathscr{H}_\bfal^1(\Omega) := \left\{ u \in L^2(\Omega) :  \| \cG_{\bfal}[u] \|_{L^2(\Om\times \Om)} < \infty \right\}.
\end{equation}
Define the bilinear forms
\begin{equation}
\lp u,w\rp_1 = \intOm \intOm \cG_{\bfal}[u]\cdot\cG_{\bfal}[w] d\bfx' d\bfx
\end{equation}
and 
\begin{equation}\label{H^1-productd}
(u,w)_{\mathscr{H}^1_\bfal} = (u,w)_{L^2(\Om)} + \lp u,w\rp_1\,.
\end{equation}
Note that if $|\bfal|^2$ is integrable, then $\scrH^1_\bfal(\Om)$ is equivalent to $\ltom$. However, under Assumption \ref{as:alpha}, this may not be the case when $\Omega \subset \mathbb{R}^2$.

\begin{theorem}[{c.f. \cite[Thm 2.2]{DuMengesha}}] Assume $\bfal$ satisfies Assumption \ref{as:alpha}. Then, $\scrH_\bfal^1(\Om)$ is a Hilbert space with inner product \eqref{H^1-productd}.
\end{theorem}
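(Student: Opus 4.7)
The plan is to confirm that $(\cdot,\cdot)_{\scrH^1_\bfal}$ is a genuine inner product and then establish completeness by exploiting the fact that convergence in $\scrH^1_\bfal(\Om)$ dominates convergence in the two ``component'' spaces $L^2(\Om)$ and $L^2(\Om\times\Om)$.

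Bilinearity and symmetry of $(\cdot,\cdot)_{\scrH^1_\bfal}$ are inherited from the $L^2(\Om)$ inner product and from the form $\lp\cdot,\cdot\rp_1$, which is symmetric because the integrand $\cG_\bfal[u]\cdot\cG_\bfal[w]$ is symmetric in $u$ and $w$. Positivity is even easier: since $\lp u,u\rp_1 = \|\cG_\bfal[u]\|_{L^2(\Om\times\Om)}^2 \geq 0$, we have $(u,u)_{\scrH^1_\bfal} \geq \|u\|_{L^2(\Om)}^2$, so $(u,u)_{\scrH^1_\bfal}=0$ forces $u=0$ in $L^2(\Om)$, hence in $\scrH^1_\bfal(\Om)$.

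For completeness, suppose $\{u_n\}\subset\scrH^1_\bfal(\Om)$ is Cauchy. From the definition of the norm we have the two bounds
\begin{equation*}
\|u_n-u_m\|_{L^2(\Om)} \leq \|u_n-u_m\|_{\scrH^1_\bfal},\qquad \|\cG_\bfal[u_n-u_m]\|_{L^2(\Om\times\Om)} \leq \|u_n-u_m\|_{\scrH^1_\bfal},
\end{equation*}
so $\{u_n\}$ is Cauchy in $L^2(\Om)$ and, by linearity of $\cG_\bfal$, $\{\cG_\bfal[u_n]\}$ is Cauchy in $L^2(\Om\times\Om)$. Completeness of the Lebesgue spaces yields $u\in L^2(\Om)$ and $\bfv\in L^2(\Om\times\Om)$ with $u_n\to u$ in $L^2(\Om)$ and $\cG_\bfal[u_n]\to \bfv$ in $L^2(\Om\times\Om)$.

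The only real step is identifying $\bfv$ with $\cG_\bfal[u]$, which I would handle by the standard subsequence trick: extract $\{u_{n_k}\}$ with $u_{n_k}\to u$ pointwise a.e.\ on $\Om$; then
\begin{equation*}
\cG_\bfal[u_{n_k}](\bfx,\bfy) = (u_{n_k}(\bfy)-u_{n_k}(\bfx))\bfal(\bfx,\bfy)\to (u(\bfy)-u(\bfx))\bfal(\bfx,\bfy) = \cG_\bfal[u](\bfx,\bfy)
\end{equation*}
for a.e.\ $(\bfx,\bfy)\in\Om\times\Om$. A further subsequence of $\{\cG_\bfal[u_{n_k}]\}$ converges to $\bfv$ a.e.\ on $\Om\times\Om$, so by uniqueness of pointwise limits $\bfv=\cG_\bfal[u]$ a.e. Consequently $\cG_\bfal[u]\in L^2(\Om\times\Om)$, i.e., $u\in\scrH^1_\bfal(\Om)$, and $u_n\to u$ in $\scrH^1_\bfal(\Om)$. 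No regularity of $\Om$ or integrability of $|\bfal|^2$ is needed for this argument; the mild obstacle is just the $L^2$-limit identification, which the a.e.\ subsequence argument settles cleanly.
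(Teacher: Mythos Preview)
Your argument is correct. The only substantive step is the identification $\bfv=\cG_\bfal[u]$, and your pointwise a.e.\ subsequence argument handles it cleanly: if $u_{n_k}\to u$ off a null set $N\subset\Om$, then $\cG_\bfal[u_{n_k}]\to\cG_\bfal[u]$ off $(N\times\Om)\cup(\Om\times N)$, which is null in $\Om\times\Om$ by Fubini; comparing with an a.e.\ convergent further subsequence of $\cG_\bfal[u_{n_k}]\to\bfv$ finishes it.

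The paper (see Proposition~\ref{H1complete}) takes a different route to the same identification. Rather than subsequences, it truncates the kernel, setting $\bfal_\tau=\bfal\,\chi_{[1/\tau,\infty)}(|\bfx-\bfy|)$, and then uses the duality between $\cG_{\bfal_\tau}$ and the nonlocal divergence $\cD_{\bfal_\tau}$ (Proposition~\ref{parts}) to show $\cG_{\bfal_\tau}[u_n]\rightharpoonup\cG_{\bfal_\tau}[u]$ against every $\bfphi\in L^2(\Om\times\Om)$; boundedness of $\bfal_\tau$ guarantees $\cD_{\bfal_\tau}[\bfphi]\in L^2(\Om)$, which is what makes the pairing work. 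This pins down $\bfv=\cG_{\bfal_\tau}[u]$ on $\{|\bfx-\bfy|\ge 1/\tau\}$ for each $\tau$, hence $\bfv=\cG_\bfal[u]$ globally. Your approach is more elementary and avoids the nonlocal calculus machinery entirely; the paper's approach avoids passing to subsequences and keeps the argument ``linear'' via the adjoint structure, which foreshadows the same device used later for $\scrH^2_\bfal$ in Proposition~\ref{H2complete}. Either way, no regularity on $\Om$ and no integrability of $|\bfal|^2$ is needed, exactly as you note.
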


\begin{definition} For $\Omega'\subset \subset\Omega$, define $\mathscr{H}^1_0(\Omega')$ to be the closed subspace of functions vanishing on $\Om\setminus \Om'$
\begin{equation*}
\mathscr{H}^1_{\bfal,0}(\Omega') = \left\{ u \in \mathscr{H}_\bfal^1(\Omega):  u=0 \text{ a.e. in } \Om\setminus \Om'\right\}\,.
\end{equation*}  
\end{definition}

%
\begin{definition}\label{def:nonlocalweaklap}
Let $u \in \mathscr{H}^1_\bfal(\Omega)$. We say $v \in L^2(\Omega)$ is the \textbf{nonlocal weak} Laplacian of $u$ provided
\[
-\lp u , \phi \rp_1 =  \intOm v \phi d \bfx \quad \forall \phi \in \mathscr{H}_\bfal^1 (\Omega).
\]
\end{definition}

\begin{proposition}\label{prop:weak-strong}
Let $\Omega \subset \mathbb{R}^d$ be open, $u : \Omega \to \mathbb{R}$, $\bfal:\Omega \times \Omega \to \mathbb{R}^k$ be antisymmetric. If $\mathcal{L}_\bfal[u] \in L^2(\Omega)$ and $\cG_\bfal[u] \in L^2(\Omega^2)$ then the weak nonlocal Laplacian $\mathcal{L}^*_\bfal[u]$ and the nonlocal Laplacian $\mathcal{L}_\bfal[u]$ agree a.e. in $\Omega$.
\end{proposition}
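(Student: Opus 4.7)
The plan is to show that the pointwise (strong) nonlocal Laplacian $\mathcal{L}_\bfal[u]$ itself satisfies the variational identity defining the weak nonlocal Laplacian, and then to invoke uniqueness of the object characterized by that identity. Because Definition \ref{def:nonlocalweaklap} is stated in terms of pairings against test functions in $\scrH^1_\bfal(\Omega)$, the argument essentially reduces to one application of the nonlocal integration by parts formula \eqref{nonlocal-by-parts-lap}.

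First I would fix an arbitrary $\phi\in\scrH^1_\bfal(\Omega)$, so that $\cG_\bfal[\phi]\in L^2(\Omega\times\Omega)$ by definition of the space. The hypotheses $\mathcal{L}_\bfal[u]\in L^2(\Omega)$ and $\cG_\bfal[u]\in L^2(\Omega\times\Omega)$ are precisely the integrability assumptions required by Proposition \ref{parts} for \eqref{nonlocal-by-parts-lap} to apply to the pair $(u,\phi)$. This yields
\[
\int_\Omega \mathcal{L}_\bfal[u]\,\phi\,d\bfx \;=\; -\int_\Omega\int_\Omega \cG_\bfal[u]\cdot\cG_\bfal[\phi]\,d\bfy\,d\bfx \;=\; -\lp u,\phi\rp_1.
\]
Comparing with Definition \ref{def:nonlocalweaklap}, this is exactly the identity that characterizes $\mathcal{L}^*_\bfal[u]$, with the candidate $v=\mathcal{L}_\bfal[u]\in L^2(\Omega)$. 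Hence $\mathcal{L}_\bfal[u]$ qualifies as a weak nonlocal Laplacian of $u$.

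It then remains to argue uniqueness: if $v_1,v_2\in L^2(\Omega)$ both satisfy $-\lp u,\phi\rp_1=\int_\Omega v_i\phi\,d\bfx$ for every $\phi\in\scrH^1_\bfal(\Omega)$, subtraction gives $\int_\Omega (v_1-v_2)\phi\,d\bfx=0$ for all such $\phi$. To conclude $v_1=v_2$ a.e.\ it suffices to note that $C_c^\infty(\Omega)\subset \scrH^1_\bfal(\Omega)$ — any smooth compactly supported function $\phi$ is Lipschitz, so by Theorem \ref{bourgain2} (or direct estimation using the compact support of $\rho_\del$) one gets $\cG_\bfal[\phi]\in L^2(\Omega\times\Omega)$ — and $C_c^\infty(\Omega)$ is dense in $L^2(\Omega)$. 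Thus $v_1-v_2\equiv 0$ a.e., which identifies $\mathcal{L}^*_\bfal[u]$ with $\mathcal{L}_\bfal[u]$.

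The only step with any subtlety is verifying that the test-function class in Definition \ref{def:nonlocalweaklap} is rich enough to separate points of $L^2(\Omega)$; this is handled by the $C_c^\infty\hookrightarrow\scrH^1_\bfal$ inclusion above. Everything else is a direct invocation of the already-proved nonlocal integration by parts, so no further technical obstacle is anticipated.
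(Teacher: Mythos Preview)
Your argument is correct and follows the same approach as the paper: both use the nonlocal integration by parts formula \eqref{nonlocal-by-parts-lap} to show that $\mathcal{L}_\bfal[u]$ and $\mathcal{L}^*_\bfal[u]$ have the same pairing against $C_c^\infty(\Omega)$ test functions, and then conclude equality a.e.\ via the density of $C_c^\infty(\Omega)$ in $L^2(\Omega)$. The paper compresses this into a single displayed chain of equalities tested against $v\in C_c^\infty(\Omega)$, whereas you separate it into ``$\mathcal{L}_\bfal[u]$ satisfies the defining identity'' followed by an explicit uniqueness argument, but the substance is identical.
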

\begin{proof}
Let $v \in C^\infty_c(\Omega)$. Then by Definition \ref{def:nonlocalweaklap} and Proposition \ref{parts} we have
\begin{equation*}
\int_\Omega \mathcal{L}^*[u] v d \bfx = - \int_\Omega \int_\Omega \cG_\bfal[u]\cdot\cG_\bfal[v] d \bfy d \bfx = \int_\Omega \mathcal{L}[u] v d \bfx.
\end{equation*}
Since this holds for any $v \in C^\infty_c(\Omega)$ we conclude $\mathcal{L}^*[u] = \mathcal{L}[u]$ a.e. in $\Omega$.
\end{proof}

\begin{remark}\label{rem:by-parts-weak-strong}
For the remainder of the paper, we will use the notation $\cL_\bfal[u]$ to denote the weak Laplacian of the function $u$. When the distinction between weak and original definition is essential it will be indicated. To begin with, note that the integration by parts Proposition \ref{parts} holds for the nonlocal weak Laplacian simply by definition of the latter.
\end{remark}

\begin{definition}  Let
\begin{equation}
\mathscr{H}_\bfal^2(\Omega) := \left\{ u \in \mathscr{H}_\bfal^1(\Omega) :  \cL_\bfal[u] \in L^2(\Omega) \right\}
\end{equation}
and defining
\begin{equation}
\lp u,w\rp_2 \dfn \intOm \intOm \cL_\bfal [u] \cL_\bfal [w] d\bfx' d\bfx\,
\end{equation}
introduce an inner product on $\scrH^2_\bfal$:
\[
(u,w)_{\scrH^2_\bfal}=  (u,w)_{\scrH^1_\bfal} + \lp u, w\rp_2\,.
\]
\end{definition}

\begin{proposition}\label{H1complete} Suppose $\boldsymbol{\alpha}$ satisfies Assumption \ref{as:alpha}. The space $\scrH^1_\bfal(\Omega)$ is a Hilbert space with inner product $(\cdot, \cdot)_{\scrH^1}$.
\end{proposition}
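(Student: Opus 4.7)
The plan is to verify the two Hilbert space axioms separately: that $(\cdot,\cdot)_{\scrH^1_\bfal}$ is indeed an inner product, and that the resulting normed space is complete. The inner-product check is essentially bookkeeping: bilinearity and symmetry are inherited from the $L^2(\Omega)$ and $L^2(\Omega\times\Omega)$ pairings that appear in \eqref{H^1-productd} and in the definition of $\lp\cdot,\cdot\rp_1$; positive definiteness follows because $(u,u)_{\scrH^1_\bfal}=\|u\|_{L^2(\Omega)}^2+\|\cG_\bfal[u]\|_{L^2(\Omega\times\Omega)}^2$ vanishes iff $u=0$ a.e. in $\Omega$.

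The substantive content is completeness. I would take an arbitrary Cauchy sequence $\{u_n\}\subset\scrH^1_\bfal(\Omega)$ and observe that the definition of the norm forces $\{u_n\}$ to be Cauchy in $L^2(\Omega)$ and $\{\cG_\bfal[u_n]\}$ to be Cauchy in $L^2(\Omega\times\Omega)$. Completeness of these two Lebesgue spaces then supplies limits $u\in L^2(\Omega)$ and $g\in L^2(\Omega\times\Omega)$ with $u_n\to u$ and $\cG_\bfal[u_n]\to g$ in the respective norms.

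The key step, which is really the only thing to prove, is identifying $g$ with $\cG_\bfal[u]$ so that the candidate limit $u$ actually lies in $\scrH^1_\bfal(\Omega)$ and $\|u_n-u\|_{\scrH^1_\bfal}\to 0$. I would do this by a routine a.e.\ extraction: pass to a subsequence along which $u_n\to u$ pointwise a.e.\ on $\Omega$, so that
\[
\cG_\bfal[u_n](\bfx,\bfy)=(u_n(\bfy)-u_n(\bfx))\bfal(\bfx,\bfy)\longrightarrow (u(\bfy)-u(\bfx))\bfal(\bfx,\bfy)=\cG_\bfal[u](\bfx,\bfy)
\]
pointwise a.e.\ on $\Omega\times\Omega$. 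Extracting a further subsequence so that $\cG_\bfal[u_n]\to g$ pointwise a.e.\ as well forces $g=\cG_\bfal[u]$ a.e., hence $u\in\scrH^1_\bfal(\Omega)$ and $\cG_\bfal[u_n]\to \cG_\bfal[u]$ in $L^2(\Omega\times\Omega)$ (since the $L^2$-limit is unique). Combined with $u_n\to u$ in $L^2(\Omega)$, this gives $u_n\to u$ in the $\scrH^1_\bfal$-norm, completing the proof.

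The only possible obstacle is the a.e.\ identification step, and it is genuinely mild: one needs to make sure the two subsequence extractions (one for $u_n$, one for $\cG_\bfal[u_n]$) can be performed in sequence on the same indices, which is standard. Assumption \ref{as:alpha} is not needed in the completeness argument itself — it only enters implicitly through the prior theorem verifying that $\scrH^1_\bfal(\Omega)$ is nontrivial and behaves well under the chosen kernel — so the proof above is essentially formal, applicable whenever $\bfal$ is antisymmetric and measurable.
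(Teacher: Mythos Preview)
Your proof is correct and in fact more direct than the paper's. Both arguments reduce to identifying the $L^2(\Omega\times\Omega)$-limit $g$ of $\cG_\bfal[u_n]$ with $\cG_\bfal[u]$, but the paper takes a different route: it truncates the kernel away from its singularity, setting $\bfal_\tau = \bfal\,\chi_{\{|\bfx-\bfy|\geq 1/\tau\}}$, and then uses the nonlocal integration-by-parts duality (Proposition~\ref{parts}) to show $\cG_{\bfal_\tau}[u_n]\to\cG_{\bfal_\tau}[u]$ for each fixed $\tau$, whence $\cG_{\bfal_\tau}[u]=g$ on $\{|\bfx-\bfy|\geq 1/\tau\}$; letting $\tau\to\infty$ gives the identification. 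Your approach bypasses the truncation and the duality entirely by exploiting the fact that $\cG_\bfal$ is a \emph{pointwise} operator---unlike a classical weak derivative---so a.e.\ convergence of $u_n$ on $\Omega$ immediately transfers (via Fubini) to a.e.\ convergence of $\cG_\bfal[u_n]$ on $\Omega\times\Omega$. This is more elementary and, as you note, does not use Assumption~\ref{as:alpha} at all. The paper's truncation argument is closer in spirit to how one identifies weak derivatives in Sobolev-space completeness proofs, but here that extra machinery is unnecessary.
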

\begin{proof}
All that remains to be proven is completeness. Let $(u_n)$ be a Cauchy sequence in $\scrH^1_\bfal(\Omega)$. By definition, $(u_n)$ is a Cauchy sequence in $L^2(\Omega)$ and consequently there exists a strong $\ltom$-limit $u \in L^2(\Omega)$. Similarly, we also know $\cG_\bfal [u_n]$ converges to some $L^2$-norm limit $\mathbf{v} \in L^2(\Omega\times \Omega)$. Consider the truncated kernel $\boldsymbol{\alpha}_\tau = \boldsymbol{\alpha} \chi_{\left[ \frac{1}{\tau}, \infty \right)}$ for $\tau > 0$. 
For every $\tau > 0$ and $\boldsymbol{\phi} \in L^2(\Omega \times \Omega)$ we have by proposition \ref{parts},
\begin{equation*}
\begin{split}
\lim_{n \to \infty} \left| \intOm \intOm \left(\cG_{\bfal_\tau} [u_n] - \cG_{\bfal_\tau}[u]\right) \cdot \boldsymbol{\phi} dy dx \right| &= \lim_{n \to \infty} \left| \intOm \intOm \cG_{\bfal_\tau} [u_n-u] \cdot \boldsymbol{\phi} dy dx \right| \\
&= \lim_{n \to \infty} \left| \intOm (u_n-u) \mathcal{D}_{\bfal_\tau}[\boldsymbol{\phi}] dx \right| \\
&\leq \lim_{n \to \infty} \| u_n -u \|_{L^2} \| \mathcal{D}_{\bfal_\tau}[\boldsymbol{\phi}] \|_{L^2} \\
&=0.
\end{split}
\end{equation*}
Note that $\mathcal{D}_{\bfal_\tau}[\mathbf{v}] \in L^2(\Omega)$ since $(\mathbf{v}'+\mathbf{v})\bfal_\tau \in C^\infty_c(\Omega \times \Omega)$ for every $\tau > 0$. We conclude $\lim\limits_{n \to \infty} \cG_{\bfal_\tau}[u_n] = \cG_{\bfal_\tau}[u]$ a.e. for $(\bfx,\bfy) \in \Omega \times \Omega$. Combining this with the fact that $\cG_\bfal[u_n] \to \bfv$ in $L^2(\Omega \times \Omega)$ implies 
for every $\tau > 0$, 
\[
\cG_{\bfal_\tau}[u] = \lim_{n \to \infty} \cG_{\bfal_\tau}[u_n] = \lim_{n \to \infty} \cG_{\bfal}[u_n] = \mathbf{v} \quad \text{a.e. in} \; \left\{(\bfx,\bfy) \in \Omega \times \Omega : |\bfx - \bfy| \geq \frac{1}{\tau} \right\}.
\] 
From this we conclude $\cG_{\bfal}[u] = \mathbf{v}$ a.e. in $\Omega \time \Omega \times \Omega$.

\end{proof}

\begin{proposition}\label{H2complete} The space $\scrH^2_\alpha (\Omega)$ is a Hilbert space with inner product $(\cdot, \cdot)_{\scrH^2_\alpha}$.
\end{proposition}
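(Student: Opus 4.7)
The plan is to mirror the blueprint of Proposition \ref{H1complete} but one derivative order higher, leveraging the fact that the new ingredient $\lp u,w\rp_2$ is a pure $L^2$-type form in $\cL_\bfal[u]$ and $\cL_\bfal[w]$. Bilinearity and symmetry of $(\cdot,\cdot)_{\scrH^2_\bfal}$ are inherited from $(\cdot,\cdot)_{\scrH^1_\bfal}$ and from the $L^2$ inner product. Positive definiteness is inherited from Proposition \ref{H1complete}, since $(u,u)_{\scrH^2_\bfal}=0$ forces $(u,u)_{\scrH^1_\bfal}=0$ and hence $u=0$ in $L^2(\Om)$.

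For completeness, let $(u_n)\subset \scrH^2_\bfal(\Om)$ be Cauchy. By construction of the norm, $(u_n)$ is Cauchy in $\scrH^1_\bfal(\Om)$, so by Proposition \ref{H1complete} there exists $u\in\scrH^1_\bfal(\Om)$ with $u_n\to u$ in $\scrH^1_\bfal(\Om)$; in particular $u_n\to u$ in $L^2(\Om)$ and $\cG_\bfal[u_n]\to \cG_\bfal[u]$ in $L^2(\Om\times\Om)$. Simultaneously, the Cauchy-ness of $(u_n)$ in the $\lp\cdot,\cdot\rp_2$ part of the norm forces the sequence $\cL_\bfal[u_n]$ to be Cauchy in $L^2(\Om)$, so there exists $v\in L^2(\Om)$ with $\cL_\bfal[u_n]\to v$ in $L^2(\Om)$.

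The remaining task is to identify $v=\cL_\bfal[u]$ in the weak sense of Definition \ref{def:nonlocalweaklap}, at which point $u\in \scrH^2_\bfal(\Om)$ and $\|u_n-u\|_{\scrH^2_\bfal}\to 0$ follow. Fix an arbitrary test function $\phi\in\scrH^1_\bfal(\Om)$ and write down the weak-Laplacian identity for each $u_n$ (cf. Remark \ref{rem:by-parts-weak-strong}):
\[
-\lp u_n,\phi\rp_1 \;=\; \int_\Om \cL_\bfal[u_n]\,\phi\,d\bfx.
\]
On the left, Cauchy--Schwarz in $L^2(\Om\times\Om)$ gives
\[
\bigl|\lp u_n,\phi\rp_1 - \lp u,\phi\rp_1\bigr|
\;\leq\; \|\cG_\bfal[u_n-u]\|_{L^2(\Om\times\Om)}\,\|\cG_\bfal[\phi]\|_{L^2(\Om\times\Om)}\;\to\;0,
\]
while on the right, $L^2(\Om)$-convergence of $\cL_\bfal[u_n]\to v$ and $\phi\in L^2(\Om)$ give $\int_\Om \cL_\bfal[u_n]\phi\,d\bfx \to \int_\Om v\phi\,d\bfx$. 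Passing to the limit yields $-\lp u,\phi\rp_1=\int_\Om v\phi\,d\bfx$ for every $\phi\in\scrH^1_\bfal(\Om)$, which is precisely the defining relation of the weak nonlocal Laplacian. Hence $v=\cL_\bfal[u]\in L^2(\Om)$, so $u\in\scrH^2_\bfal(\Om)$ and $u_n\to u$ in $\scrH^2_\bfal(\Om)$.

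The main point to watch is the identification $v=\cL_\bfal[u]$: unlike the proof of Proposition \ref{H1complete}, which required a truncated kernel $\bfal_\tau$ to move the operator across the duality, here the weak formulation of $\cL_\bfal$ is tailor-made for passing to the limit, and the two convergences in $L^2(\Om\times\Om)$ and $L^2(\Om)$ combine through elementary Cauchy--Schwarz. No further kernel regularization is needed, and the argument is self-contained given Proposition \ref{H1complete}, Proposition \ref{parts}, and Definition \ref{def:nonlocalweaklap}.
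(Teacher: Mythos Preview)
Your proof is correct and follows essentially the same route as the paper: invoke Proposition~\ref{H1complete} for the $\scrH^1_\bfal$-part, extract an $L^2$-limit $v$ of $\cL_\bfal[u_n]$, and identify $v=\cL_\bfal[u]$ in the weak sense by passing to the limit in $-\lp u_n,\phi\rp_1=\int_\Om \cL_\bfal[u_n]\phi$ via Cauchy--Schwarz on each side. Your version is in fact slightly cleaner, since the paper inserts an unnecessary appeal to Theorem~\ref{bourgain2} to bound $\|\cG_\bfal[\phi]\|_{L^2(\Om\times\Om)}$ by $\|\phi\|_{W^{1,2}}$, whereas finiteness of $\|\cG_\bfal[\phi]\|_{L^2(\Om\times\Om)}$ is already guaranteed by $\phi\in\scrH^1_\bfal(\Om)$.
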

\begin{proof} 

Again we only need to verify completeness.
Let $(u_n)$ be a Cauchy sequence in $\scrH^2_\alpha(\Omega)$. Then $(u_n)$, $(\cG[u_n])$, and $(\cL_{\alpha}(u_n))$ are Cauchy in $L^2$ and consequently have $L^2$ limits $u,v,$ and $w$ respectively. Using the same method as in Proposition \ref{H1complete}, we have $v = \cG[u_n]$ a.e. Let $\phi \in \mathscr{H}^1(\Omega)$. 

Appealing to Proposition \ref{parts}, Theorem \ref{bourgain2}, and H\"{o}lder's inequality, we obtain
\begin{equation*}
\begin{split}
\lim_{n \to \infty} \intOm \intOm \cG_{\bfal}[u_n-u]\cdot\cG_{\bfal}[\phi] d \bfy d \bfx &\leq \lim_{n \to \infty} \| \cG_\bfal[u-u_n] \|_{L^2(\Omega \times \Omega)} \| \cG_{\bfal}[\phi] \|_{L^2(\Omega \times \Omega)} \\
&\leq  \lim_{n \to \infty} C \| \cG_\bfal[u-u_n] \|_{L^2(\Omega \times \Omega)} \| \phi \|_{W^{1,2}(\Omega)} \\
&= 0
\end{split}
\end{equation*}
and 
\begin{equation*}
\begin{split}
\lim_{n \to \infty} \intOm \intOm (w-\cL_\bfal[u_n]) \phi d \bfy d \bfx &\leq \lim_{n \to \infty} \| w-\cL_\bfal [u_n] \|_{L^2(\Omega)} \| \phi \|_{L^2(\Omega)} \\
&= 0 .
\end{split}
\end{equation*}

Putting this together, we obtain via Proposition \ref{parts}
\[
\begin{split}
- \intOm \intOm \cG_{\bfal}[u]\cdot\cG_{\bfal} [\phi] d \bfx =& - \lim_{n \to \infty} \intOm \intOm \cG_\bfal [u_n]\cdot\cG_\bfal [\phi] d \bfx \\
=& \lim_{n \to \infty} \intOm \cL_\bfal [u_n] \phi d \bfx = \intOm w \phi dx\,.
\end{split}
\]
Thus, $w = \cL_\bfal [u]$ a.e. in $\Omega$ in the nonlocal weak sense and, consequently, $u \in \scrH^2_\bfal(\Omega)$.
\end{proof}


Finally, we  define Hilbert spaces associated with the boundary conditions that we will consider.

\begin{definition}[Nonlocal  ``hinged" and ``clamped" spaces]\label{def:hinged-clamped}
Let $\Om''\subset\subset \Om'$ be an open subdomain of $\Om'$. Define respectively
\begin{equation}\label{def:H-hinged}
\mathscr{H}_{\bfal, H}^2 (\Omega',\Om'') = \left\{ u \in \mathscr{H}_{\bfal,0}^1(\Omega') \cap \scrH_{\bfal}^2(\Om) :  \cL_{\bfal}[u] = 0 \text{ a.e. on }\Om' \setminus \Om''\right\} 
\end{equation}
\begin{equation}\label{def:H-clamped}
\begin{split}
\mathscr{H}_{\bfal, C}^2 (\Omega') = \bigg\{ & u \in \mathscr{H}_{\bfal,0}^1(\Omega')\cap  \scrH_{\bfal}^2(\Om):   \mathcal{N}_\bfal[\cG_\bfal[u]] = 0 \text{ a.e. on } \inter(\Omega') \bigg\} .
\end{split}
\end{equation}
\end{definition}

From the definition \eqref{def:nonlocal-normal} of the nonlocal normal operator and the nonlocal two-point gradient \eqref{def:nonlocal-grad}  it follows that
\[
\cN_{\bfal}[\cG_{\bfal}[u]](\bfx) = \int_{\Om\setminus \Om'} (u(\bfy) - u(\bfx)) \mu(\bfx,\bfy) d\bfy \txtfor \bfx\in \inter(\Om')\,.
\]
Moreover, because in the clamped space we also have $u(\bfy)=0$ on $\Om\setminus \Om'$, then the identity $\cN_{\bfal}[\cG_{\bfal}[u]](\bfx)=0$ actually reduces to
\[
-u(\bfx)\int_{\Om\setminus \Om'} \mu(\bfx,\bfy) d\bfy = 0\,.
\]
If we set $\Om' = \Om_{\delta}$ where
\[
 \Omega_\delta \dfn \left\{ \bfx \in \Omega : \text{dist}(\bfx,\partial \Omega) > \delta \right\}
 \]
 and choose $\bfal=\bfal_{\del}$ as in Assumption \ref{as:alpha}, 
then  because $y\mapsto \mu(\bfx,\bfy)$ is  strictly positive and continuous on $\Om\setminus \Om'$ for any fixed $\bfx \in \inter\Om_{\del}$, we conclude that $u(\bfx) =0$ a.e. in $\Om\setminus \Om_{2\del}$. Thus we have an alternative representation for the  nonlocal ``clamped" space:
\begin{equation}\label{H-clamped-delta}
\mathscr{H}_{\bfal_{\del}, C}^2 (\Omega_{\del}) =  \mathscr{H}_{\bfal_{(2\del)},0}^1(\Omega_{\del})\cap  \scrH_{\bfal_{\del}}^2(\Om)\,.
\end{equation}

\section{Compactness theorems}

A key tool in subsequent analysis will be the following version of a nonlocal Poincar\'e inequality. 

\begin{theorem}[{\cite[Thm. 1.2]{Ponce}}]\label{Poincare} Assume $\Om$ is a bounded domain  of dimension  $d=\dim\Om \geq 2$ 
with  Lipschitz boundary.
 Let $(\delta_n)$ be a sequence of positive numbers decreasing to $0$. Let $(\rho_{\delta_n})$ be a sequence of functions satisfying \eqref{mollifier}. There exists  a constant $C_{p,d,\Om}$ dependent on the domain, $p$  (and also on the choice of the sequence of mollifiers $\rho_{\del_n}$) such that
\begin{equation*}
\| f - f_\Omega \|_{L^p(\Omega)}^p \leq C_{p,d, \Omega} \intOm \intOm \frac{|f(\bfx)-f(\bfy)|^p}{|\bfx-\bfy|^p} \rho_{\delta_n}(|\bfx-\bfy|) d\bfx d\bfy
\end{equation*}
for every $f \in L^p(\Omega)$ with the convention that the right-hand side is $+\infty$ when diverges. Here $f_\Omega$ is the average value of $f$ in $\Omega$.
\end{theorem}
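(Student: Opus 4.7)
My plan is to prove the inequality by a contradiction/compactness argument built on the Bourgain--Brezis--Mironescu (BBM) machinery. Assume for contradiction that the claim fails: then, passing to a subsequence still indexed by $n$, there exist $f_n\in L^p(\Omega)$ satisfying
\[
\|f_n-(f_n)_\Omega\|_{L^p(\Omega)}^p \;>\; n\intOm\intOm \frac{|f_n(\bfx)-f_n(\bfy)|^p}{|\bfx-\bfy|^p}\rho_{\delta_n}(|\bfx-\bfy|)\,d\bfx\,d\bfy.
\]
Since the right-hand side depends on $f_n$ only through differences $f_n(\bfx)-f_n(\bfy)$, one may replace $f_n$ by $g_n\dfn f_n-(f_n)_\Omega$, so that $(g_n)_\Omega=0$, and then normalize to have $\|g_n\|_{L^p(\Omega)}=1$. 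With these reductions the nonlocal seminorm
\[
I_n(g_n)\dfn \intOm\intOm \frac{|g_n(\bfx)-g_n(\bfy)|^p}{|\bfx-\bfy|^p}\rho_{\delta_n}(|\bfx-\bfy|)\,d\bfx\,d\bfy
\]
satisfies $I_n(g_n)<1/n$, so $I_n(g_n)\to 0$ as $n\to\infty$.

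The next step is to invoke the BBM compactness theorem, which asserts that any sequence $(g_n)\subset L^p(\Omega)$ that is bounded in $L^p$ and for which $I_n(g_n)$ remains bounded is relatively compact in $L^p(\Omega)$. (This is exactly the point where the Lipschitz hypothesis on $\partial\Omega$ enters, via the existence of a bounded $W^{1,p}(\mathbb{R}^d)$-extension operator, mirroring the remark following Theorem \ref{bourgain2}; the normalization \eqref{mollifier} on $\rho_{\delta_n}$ is also essential.) Extract a subsequence $g_{n_k}\to g$ in $L^p(\Omega)$. The BBM characterization of $W^{1,p}$ in terms of these same functionals then gives $g\in W^{1,p}(\Omega)$ together with the liminf bound
\[
\int_\Omega |\nabla g|^p\,d\bfx \;\le\; C\liminf_{k\to\infty} I_{n_k}(g_{n_k}) \;=\; 0.
\]

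Hence $\nabla g\equiv 0$ a.e.\ in $\Omega$, and since $\Omega$ is connected, $g$ must be a constant. Because $L^p$-convergence preserves integral averages, $(g)_\Omega=\lim_k (g_{n_k})_\Omega=0$, forcing $g\equiv 0$. But $\|g\|_{L^p(\Omega)}=\lim_k \|g_{n_k}\|_{L^p(\Omega)}=1$, a contradiction. The main obstacle is not the logical framework---which is a routine compactness-plus-rigidity scheme---but rather isolating the precise BBM ingredients: one needs both the \emph{compactness} half (sequences with bounded $I_n$ precompact in $L^p$) and the \emph{lower semicontinuity/identification} half (any $L^p$-limit lies in $W^{1,p}$ with gradient controlled by $\liminf I_n$). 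Both require $d\ge 2$, radiality and $L^1$-normalization of the mollifiers $\rho_{\delta_n}$, and Lipschitz regularity of $\partial\Omega$ for the extension argument; once these are in place, the contradiction closes immediately.
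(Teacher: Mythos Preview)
The paper does not supply its own proof of this theorem; it is quoted directly from Ponce. Your contradiction/compactness argument is precisely the standard proof (and indeed the paper reproduces essentially the same scheme when proving the closely related Corollary~\ref{coro:poincare:0}, explicitly citing ``as in \cite[p.~12]{Ponce}''): negate, normalize to mean zero and unit $L^p$ norm, use BBM compactness to extract an $L^p$-limit $g\in W^{1,p}(\Omega)$, then the liminf bound forces $\nabla g=0$, hence $g$ is constant, hence zero by the mean condition, contradicting $\|g\|_{L^p}=1$.

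One small point worth tightening: when you write ``passing to a subsequence still indexed by $n$,'' the negation of the statement produces, for each $k$, some index $n_k$ and some $f_k$ violating the inequality with constant $k$. You should note that either $(n_k)$ has a bounded subsequence---in which case a single fixed mollifier $\rho_{\delta_{n_0}}$ would already violate a Poincar\'e-type inequality, which is handled by a more elementary argument---or $n_k\to\infty$, in which case $\delta_{n_k}\to 0$ and the BBM machinery applies to the subsequence of mollifiers $(\rho_{\delta_{n_k}})$. This is implicit in your phrasing but deserves a sentence. Otherwise the argument is correct and matches the approach in the cited reference.
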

\begin{remark}
It should be noted that in \cite{Ponce}, the result of Theorem \ref{Poincare}  is extended to dimension $d = 1$; however, in that case, it is necessary to place an additional constraint on $\rho_{\delta_n}$. 
\end{remark}


In the nonlocal setting we cannot appeal to the embedding and compactness methods of Sobolev theory.
Instead, the crucial compactness result in this context will be provided by the following theorem of Brezis, Bourgain, and Mironescu: 
\begin{theorem}[{\cite[Thm. 4]{Bourgain}}]\label{thm:UniformGrad} 
Let $\Om$ be a bounded domain of class $C^1$. Let $(\delta_n)$ be a sequence decreasing to $0$. Suppose $(f_n)$ is a sequence in $L^p(\Om)$, $1\leq p <\infty$, of functions satisfying the uniform estimate
\begin{equation}\label{bound-on-quotient}
\intOm \intOm \frac{|f_n(\bfx)-f_n(\bfy)|^p}{|\bfx-\bfy|^p} \rho_{\delta_n}(|\bfx-\bfy|) d\bfx d\bfy \leq C_0
\end{equation}
where $(\rho_{\delta_n})$ is a sequence of non-increasing mollifiers satisfying \eqref{mollifier}. If
\begin{equation}\label{zero-average}
\intOm f_n(\bfx) d\bfx =0 \txtforall n,
\end{equation}
then 
\begin{enumerate}[(i)]
\item
the sequence $(f_n)$ is relatively compact in $L^p(\Om)$, so up to a subsequence we may assume $f_n \to f$ in $L^p(\Om)$

\item if, in addition, $1 < p < \infty$, then $f \in W^{1,p}(\Om)$ and $\|\nabla f\|_{L^p(\Om)}\leq K(p,d) C_0$ for $K$ dependent only on $p$ and the dimension $d$. 
\end{enumerate}
\end{theorem}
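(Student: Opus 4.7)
The plan is to establish (i) via the Fréchet--Kolmogorov compactness criterion and then obtain (ii) through a difference-quotient characterization of $W^{1,p}$.

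For (i), the uniform $L^p$-bound on $(f_n)$ is immediate from the mean-zero condition \eqref{zero-average} together with the nonlocal Poincaré inequality of Theorem \ref{Poincare} applied to the hypothesis \eqref{bound-on-quotient}. The heart of the argument is equicontinuity in the mean: $\|f_n(\cdot+h)-f_n\|_{L^p(\Omega')}\to 0$ as $h\to 0$, uniformly in $n$, for $\Omega'\subset\subset \Omega$. I would extend each $f_n$ to $\mathbb{R}^d$ via a $C^1$ extension operator (which exists because $\partial\Omega\in C^1$) and introduce mollifications $f_n^\varepsilon = f_n * \eta_\varepsilon$ for a standard radial mollifier $\eta_\varepsilon$. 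For each fixed $\varepsilon>0$, the sequence $(f_n^\varepsilon)$ is bounded in $W^{1,p}_{\mathrm{loc}}$ and hence equicontinuous by classical theory. For the approximation step, Jensen's inequality gives
\[
\|f_n^\varepsilon - f_n\|_{L^p(\Omega')}^p \leq \int_{\Omega'} \int_{|z|<\varepsilon} \eta_\varepsilon(z)\,|f_n(x+z)-f_n(x)|^p\,dz\,dx,
\]
and the inner integral must be reorganized to match the structure of \eqref{bound-on-quotient}; combining these pieces with a diagonal argument then yields equicontinuity for $(f_n)$ itself.

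For (ii), once $f_n\to f$ in $L^p(\Omega)$ is secured, I would show $f$ possesses weak partial derivatives in $L^p$ using the Riesz criterion: $f\in W^{1,p}(\Omega)$ provided $\sup_{h\neq 0}|h|^{-1}\|\tau_h f - f\|_{L^p(\Omega_{|h|})}<\infty$, with $\|\nabla f\|_{L^p}$ controlled by this supremum up to a dimensional constant. To obtain this, I would prove $|h|^{-p}\|\tau_h f_n - f_n\|_{L^p(\Omega_{|h|})}^p \lesssim C_0$ uniformly in $n$ by reducing translation differences to the weighted double integral in \eqref{bound-on-quotient}, exploiting the normalization $\int \rho_{\delta_n}=1$. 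Passing $n\to\infty$ transfers the bound to $f$, and Fatou lower-semicontinuity along the mollifier family recovers the quantitative estimate $\|\nabla f\|_{L^p}\leq K(p,d)\,C_0^{1/p}$.

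The principal obstacle is that $\rho_{\delta_n}$ concentrates at a scale $\delta_n$ that may be much smaller than either the translation size $|h|$ relevant in (ii) or the mollification scale $\varepsilon$ used in (i), so \eqref{bound-on-quotient} does not directly control either quantity. The standard circumvention is a scale-splitting/covering argument: decompose the translation $z$ or the pair separation $|x-y|$ into dyadic annuli, apply the quotient estimate on annuli where $\rho_{\delta_n}$ is effective, and then resum using the unit-mass normalization. This rescaling technique is the technical core of the original BBM argument and cannot be avoided by elementary means.
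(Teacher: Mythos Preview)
The paper does not prove this statement; it is cited from \cite{Bourgain} and accompanied only by Remark~\ref{rem:regularity}, which outlines the original strategy. That remark agrees with your choice of the Riesz--Fr\'echet--Kolmogorov criterion for (i), but it isolates a point your plan glosses over: Fr\'echet--Kolmogorov by itself yields compactness only in $L^p(\Omega')$ for $\Omega'\subset\subset\Omega$, and to upgrade to $L^p(\Omega)$ the BBM proof extends each $f_n$ across $\partial\Omega$ \emph{by reflection}. The non-increasing hypothesis on $\rho_{\delta_n}$---which otherwise plays no role in your sketch---is precisely what guarantees that reflection preserves the bound \eqref{bound-on-quotient} on the enlarged domain (distances between a point and its reflected image can only increase, so the weight can only decrease). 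Your invocation of ``a $C^1$ extension operator'' is too vague here: the $f_n$ lie merely in $L^p$, and a generic bounded $L^p$ extension (extension by zero, say) does not carry \eqref{bound-on-quotient} outside $\Omega$, so nothing would control $\|f_n^\varepsilon-f_n\|$ near $\partial\Omega$.

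Beyond the extension step, your outline openly defers the central estimate. You correctly observe that the mollification scale $\varepsilon$ in (i) and the translation size $|h|$ in (ii) need not be comparable to $\delta_n$, so neither $\|f_n^\varepsilon-f_n\|_{L^p}$ nor $|h|^{-1}\|\tau_h f_n-f_n\|_{L^p}$ is directly controlled by \eqref{bound-on-quotient}; you then declare that the resolution is ``the technical core of the original BBM argument.'' Since the statement \emph{is} the BBM theorem, this is circular. The reductions you describe (Fr\'echet--Kolmogorov framework, the Riesz difference-quotient criterion for $W^{1,p}$) are standard; the entire content of the theorem lies in the scale-bridging estimate you have postponed.
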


\begin{remark}\label{rem:regularity}
The compactness in $L^p$ (part (i)) result of \cite[Thm. 4]{Bourgain} uses Riesz-Fr\'echet-Kolmogorov's theorem (e.g., see \cite[Thm. IV.25, p. 72]{b:brezis:83}) which establishes this  compactness result on a set compactly contained within a given open domain. To get the conclusion on all of $\Om$ the proof of   \cite[Thm. 4]{Bourgain}  uses an extension of the functions $f_n$ by reflection across the boundary of $\Om$;  due to the monotonicity condition on $\rho_{\del_n}$ such a reflection preserves the property \eqref{bound-on-quotient}. 
Thus, part (i) needs the $C^1$ regularity  only to obtain an $L^p$-preserving extension by reflection. This result  reduces to a change of variable theorem for the mapping that locally defines the boundary; this procedure  could potentially be carried out under weaker boundary regularity conditions (though stronger than Lipschitz), for instance, see \cite{hajlasz:93:cm}.
\end{remark}

Recall that
 \[
 \Omega_\delta \dfn \left\{ \bfx \in \Omega : \text{dist}(\bfx,\partial \Omega) > \delta \right\}\,.
\] 
Below we present a useful corollary to Theorem \ref{thm:UniformGrad}. This result is also mentioned in \cite{DuMengesha}. For a self contained exposition we provide a proof.
\begin{coro}\label{coro:UniformGrad}
In Theorem \ref{thm:UniformGrad}  we can replace assumption \eqref{zero-average} by the assertion that $(f_n)$ are bounded in $L^p(\Om)$. Moreover, if  $1<p<\infty$ and $\supp f_n \subset \cl{\Om}_{\delta_n}$, then $f\in W^{1,p}_0(\Om)$. 
\end{coro}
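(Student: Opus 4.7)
For the first assertion, I would reduce the $L^p$-bounded case directly to the zero-average case covered by Theorem \ref{thm:UniformGrad}. Set $c_n \dfn |\Omega|^{-1}\int_\Omega f_n\,d\bfx$; by H\"older's inequality, $|c_n|\leq |\Omega|^{-1/p}\|f_n\|_{L^p(\Omega)}$, so after passing to a subsequence I may assume $c_n\to c$ in $\mathbb{R}$. The shifted functions $g_n\dfn f_n-c_n$ have zero average and satisfy \eqref{bound-on-quotient} with the same $C_0$, since constants cancel in the difference quotient. Theorem \ref{thm:UniformGrad} then yields a further subsequence along which $g_n\to g$ in $L^p(\Omega)$ with $g\in W^{1,p}(\Omega)$ when $1<p<\infty$. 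Consequently $f_n\to g+c$ in $L^p(\Omega)$, and the limit lies in $W^{1,p}(\Omega)$ because constants belong to $W^{1,p}$ over a bounded set.

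For the second assertion, I would argue by extension by zero. Fix a bounded domain $U$ of class $C^1$ with $\overline{\Omega}\subset U$, and let $\tilde f_n$ denote the extension of $f_n$ by $0$ to $U$. Clearly $(\tilde f_n)$ is bounded in $L^p(U)$. The crucial observation is that the quotient integral in \eqref{bound-on-quotient} over $U\times U$ coincides with that over $\Omega\times \Omega$: if $\bfx\in\Omega$ and $\bfy\in U\setminus \Omega$, then either $f_n(\bfx)=0$ (so the integrand vanishes) or $\bfx\in\Omega_{\delta_n}$, in which case $|\bfx-\bfy|\geq \dist(\bfx,\partial\Omega)>\delta_n$ and $\rho_{\delta_n}(|\bfx-\bfy|)=0$; the symmetric case and the pair outside $\Omega$ are trivial. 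Hence the hypothesis \eqref{bound-on-quotient} passes to $U$ with the same constant.

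Applying the first part of the corollary on the $C^1$ domain $U$ produces a subsequence with $\tilde f_n\to \tilde f$ in $L^p(U)$ and $\tilde f\in W^{1,p}(U)$. Since $\tilde f_n\equiv 0$ on $U\setminus\overline{\Omega}$, the same holds for $\tilde f$ a.e., so $\tilde f$ is a $W^{1,p}(U)$ function whose zero-extension to $\mathbb{R}^d$ remains in $W^{1,p}(\mathbb{R}^d)$ and is supported in $\overline{\Omega}$. Restricting $\tilde f$ to $\Omega$ gives the claimed limit $f$, and the zero-extension characterization of $W^{1,p}_0$ on a $C^1$ domain (which holds a fortiori on the Lipschitz domain $\Omega$) yields $f\in W^{1,p}_0(\Omega)$.

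The only delicate point is the second part: one must ensure both that the uniform quotient bound survives the zero-extension (handled by the support argument using $\supp f_n\subset\overline{\Omega}_{\delta_n}$ together with $\supp \rho_{\delta_n}\subset[0,\delta_n)$), and that the resulting $W^{1,p}$ limit with support in $\overline{\Omega}$ actually belongs to $W^{1,p}_0(\Omega)$ rather than merely $W^{1,p}(\Omega)$. Everything else is a straightforward reduction to Theorem \ref{thm:UniformGrad}.
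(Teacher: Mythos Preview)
Your proof is correct and follows essentially the same approach as the paper: subtract averages to reduce to the zero-average case of Theorem~\ref{thm:UniformGrad}, then extend by zero to a larger domain and exploit the support condition $\supp f_n\subset\cl{\Om}_{\delta_n}$ together with $\supp\rho_{\delta_n}\subset[0,\delta_n)$ to carry the quotient bound across. The only cosmetic difference is that the paper concludes $f\in W^{1,p}_0(\Omega)$ via the trace of the extended limit on the $C^1$ submanifold $\partial\Omega$, whereas you invoke the equivalent zero-extension characterization of $W^{1,p}_0$; both are standard and your version is arguably cleaner (and you are more careful to note that the enlarged domain should be $C^1$ and that one applies the first part of the corollary, not Theorem~\ref{thm:UniformGrad} directly, on it).
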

\begin{proof}
Let $\displaystyle a_n \dfn \frac{1}{|\Om|}\intOm f_n(\bfx) d\bfx$. Since $\Om$ is bounded and $(f_n)$ is bounded in $L^p(\Om)$, then the scalar sequence $(a_n)$ is bounded.  Define
\[
  g_n(\bfx) \dfn f_n(\bfx) - a_n\,.
\]
Then each $g_n$ obeys \eqref{bound-on-quotient} and has zero average. By Theorem \ref{thm:UniformGrad} we know $\{g_n\}$ is pre-compact in $L^p(\Om)$. Because $(a_n)$ is a bounded scalar sequence and $\Om$ is bounded, then $\{f_n\}$  is also pre-compact in $L^p(\Om)$.

Now to show $f \in W^{1,p}_0(\Omega)$. 
Recall supp $f_n \subset \subset \Omega_{\delta_n}$. Choose $\Omega_{\text{ext}} \subset \mathbb{R}^d$ open and bounded such that $\Omega \subset \subset \Omega_{\text{ext}}$. Let
\begin{equation*}
\tilde{f}_n = \left\{ 
\begin{array}{ll}
f_n, & \bfx \in \Omega \\
0, & \bfx \in \Omega_{\text{ext}} \setminus \Omega
\end{array}
\right.
\end{equation*}
Notice that the integral in
\[
\int_{\Omega_{\text{ext}}} \int_{\Omega_{\text{ext}}} \frac{|\tilde{f}_n(\bfx)-\tilde{f}_n(\bfy)|^p}{|\bfx-\bfy|^p} \rho_{\delta_n}(|\bfx-\bfy|) d\bfx d\bfy
\]
can be decomposed as
\begin{equation*}
\begin{split}
\left[ 
\intOm \intOm
+ 
\int_{\Omega_{\text{ext}}\setminus \Om  }
\intOm
+ 
\intOm
\int_{\Omega_{\text{ext}} \setminus \Omega_{\delta_n}} \right](\ldots) d\bfx d\bfy 
\end{split}
\end{equation*}
When $\bfx\in \Om_{\ext}\setminus \Om$ then either $\bfy\in \Om \setminus \Om_{\del_n}$ in which case $\tilde{f_n}(\bfx) - \tilde{f}_n(\bfy) = 0$ via the zero condition on the collar possibly excluding a set of measure zero, or the distance between $\bfx$ and $\bfy$ is at least $\del$ whence $\rho_{\del_n}(|\bfx-\bfy|)=0$. So the second double integral above is $0$.  A symmetric argument shows that the third one is zero as well. Conclude:
\begin{equation*}
\begin{split}
\int_{\Omega_{\text{ext}}} \int_{\Omega_{\text{ext}}} \frac{|\tilde{f}_n(\bfx)-\tilde{f}_n(\bfy)|^p}{|\bfx-\bfy|^p} \rho_{\delta_n} d\bfx d\bfy =& \int_{\Omega} \int_{\Omega} \frac{|f_n(\bfx)-f_n(\bfy)|^p}{|\bfx-\bfy|^p} \rho_{\delta_n} d\bfx d\bfy \leq C_0
\end{split}
\end{equation*}
By Theorem \ref{thm:UniformGrad} we know $\tilde{f} := \lim\limits_{n \to \infty} \tilde{f}_n \in W^{1,p}(\Omega_{\text{ext}})$. 
Notice that $\tilde{f}$ is a $W^{1,2}$ extension of $f$ to $\Omega_{\text{ext}}$ across $\partial \Omega$. The trace of $\tl{f}$ on the ($C^1$) sub-manifold $\p\Om$ is then uniquely determined  (e.g., \cite[Thm. 5.36]{adams}) by treating it as the boundary of $\Om$ and of $\Om_{\ext}\setminus\Om $ respectively.  Since $\tl{f}=0$ on $\Om_{\ext}\setminus \Om $ we conclude $f = 0$ on $\partial \Omega$.
\end{proof}

The preceding Corollary \ref{coro:UniformGrad}, in turn allows us to state two versions of Theorem \ref{Poincare} applicable to functions compactly supported in $\Om$:

\begin{coro}[Nonlocal Poincar\'e with zero shrinking collar for a sequence]\label{coro:poincare:0}
Under the assumptions of Theorem \ref{Poincare}, suppose $(f_n)$ is a family of functions in $L^p(\Om)$ such that $\supp f_n \subset \cl{\Om}_{\del_n}$. Then there is a constant $C_{p,d,\Om}>0$ satisfying
\begin{equation*}
\| f_n \|_{L^p(\Omega)}^p \leq C_{p,d, \Omega} \intOm \intOm \frac{|f_n(\bfx)-f_n(\bfy)|^p}{|\bfx-\bfy|^p} \rho_{\delta_n}(|\bfx-\bfy|) d\bfx d\bfy
\end{equation*}
for every $n$ with the convention that the right-hand side is $+\infty $ when undefined.
\end{coro}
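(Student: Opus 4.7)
The natural strategy is a normalization/contradiction argument that converts the Poincar\'e-type inequality of Theorem~\ref{Poincare} (which only controls $\|f-f_\Om\|_{L^p}$) into an absolute control of $\|f_n\|_{L^p}$, by exploiting the zero-boundary information encoded by $\supp f_n \subset \cl{\Om}_{\del_n}$. The compactness result in Corollary~\ref{coro:UniformGrad} is tailor-made for exactly this setting, so the whole proof amounts to assembling these pieces correctly.

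Suppose for contradiction that no constant $C_{p,d,\Om}$ works. Then there is a subsequence---which I relabel $(f_{n_k})$ with $n_k\to\infty$ and hence $\del_{n_k}\searrow 0$---along which
\[
\|f_{n_k}\|_{L^p(\Om)}^p > k\, I_{n_k},\qquad I_{n_k} \dfn \intOm \intOm \frac{|f_{n_k}(\bfx)-f_{n_k}(\bfy)|^p}{|\bfx-\bfy|^p}\,\rho_{\del_{n_k}}(|\bfx-\bfy|)\,d\bfx\,d\bfy.
\]
(If the failing indices formed only a finite set, a large enough $C$ would absorb the finite exceptions.) I would then normalize by $h_k \dfn f_{n_k}/\|f_{n_k}\|_{L^p(\Om)}$, so that $\|h_k\|_{L^p(\Om)}=1$, $\supp h_k \subset \cl{\Om}_{\del_{n_k}}$, and the corresponding double integral $J_k$ for $h_k$ satisfies $J_k<1/k$.

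Next I would apply Corollary~\ref{coro:UniformGrad} to $(h_k)$ with the mollifier subsequence $\rho_{\del_{n_k}}$: the uniform bound $J_k\leq 1$ together with the support property yield, after extraction, an $L^p$-limit $h\in W^{1,p}_0(\Om)$. Since $J_k\to 0$, applying part~(ii) of Theorem~\ref{thm:UniformGrad} to successive tails forces $\|\nabla h\|_{L^p(\Om)}$ to be dominated by arbitrarily small multiples of $K(p,d)$, so $\nabla h=0$ a.e. Because $\Om$ is connected by standing hypothesis, $h$ is constant; membership in $W^{1,p}_0(\Om)$ with $1<p<\infty$ and Lipschitz boundary then forces that constant to be zero via the trace characterization. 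This contradicts $\|h\|_{L^p(\Om)}=\lim_k \|h_k\|_{L^p(\Om)}=1$.

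\textbf{Main obstacle.} The delicate point is the interplay between the varying mollifier kernel $\rho_{\del_{n_k}}$ and the compactness extraction: Corollary~\ref{coro:UniformGrad} must be applied along the subsequence, using $(\rho_{\del_{n_k}})$ as the admissible mollifier family (legitimate since $\del_{n_k}\searrow 0$ and each $\rho_{\del_{n_k}}$ still satisfies \eqref{mollifier}). A secondary subtlety is the reduction ``$\nabla h = 0$ plus $h\in W^{1,p}_0(\Om)$ $\Rightarrow$ $h\equiv 0$'', which requires both connectedness of $\Om$ and $1<p<\infty$ so that $W^{1,p}_0(\Om)$ coincides with the kernel of the trace operator.
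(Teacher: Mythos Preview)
Your proposal is correct and follows essentially the same contradiction--normalization argument as the paper: assume the constants blow up, normalize to unit $L^p$ norm so the nonlocal energy tends to zero, extract a limit in $W^{1,p}_0(\Om)$ via Corollary~\ref{coro:UniformGrad}, and use part~(ii) of Theorem~\ref{thm:UniformGrad} to force $\nabla h=0$, contradicting $\|h\|_{L^p}=1$. The only cosmetic difference is that the paper invokes the Poincar\'e--Wirtinger inequality at the final step where you appeal to connectedness plus the trace characterization of $W^{1,p}_0$; both routes yield the same conclusion.
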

\begin{proof} 
Proceed by contradiction as in \cite[p. 12]{Ponce}. 
Suppose we can extract a subsequence (re-indexed again by $n$) so that the candidates for ``$C_{p,d,\Om}$" diverge to $+\infty$. In particular, suppose a sequence of scalars $(c_n)$ diverges to $+\infty$  and for every $n$
\[
\|f_n\|^p_{L^p(\Om)} \geq c_n  \intOm \intOm \frac{|f(\bfx)- f(\bfy)|^p}{|\bfx-\bfy|^p}\rho_{\del_n}(|\bfx-\bfy|) d\bfx d\bfy 
\]
Re-normalizing both sides by $\|f_n\|^p_{L^p(\Om)}$  we may just assume that  $\|f_n\|_{L^p}=1$ and 
\begin{equation}\label{zero-limit}
 \lim_{n\to \infty}\intOm \intOm \frac{|f_n(\bfx)- f_n(\bfy)|^p}{|\bfx-\bfy|^p}\rho_{\del_n}(|\bfx-\bfy|) d\bfx d\bfy =0\,. 
\end{equation}
By Corollary \ref{coro:UniformGrad}  functions $(f_n)$ converge strongly to $f$ in $L^p(\Om)$ with $\|f\|_{L^p(\Om)}=1$. Moreover, $f\in W^{1,p}_0(\Om)$.
In addition, by and the same Corollary (referring now to part (ii) of Theorem \ref{thm:UniformGrad}) we have from \eqref{zero-limit} that $\|\nabla f\|_{L^p(\Om)}=0$.  Poincar\'e-Wirtinger's inequality now implies that $f =0$  in $W^{1,p}_0(\Om)$ thus contradicting the fact that $\|f\|_{L^p(\Om)}=1$.
\end{proof}

\begin{coro}[Nonlocal Poincar\'e with zero collar]\label{coro:poincare}
Under the assumptions of Theorem \ref{Poincare}, suppose if $\Om_0 \subset\subset\Om$. Then there is a constant  $C_{p,d,\Om, \Om_0}>0$ such that 
\begin{equation*}
\| f \|_{L^p(\Omega)}^p \leq C_{p,d, \Omega, \Om_0} \intOm \intOm \frac{|f(\bfx)-f(\bfy)|^p}{|\bfx-\bfy|^p} \rho_{\delta_n}(|\bfx-\bfy|) d\bfx d\bfy
\end{equation*}
for all $f\in L^p(\Om)$ satisfying  $\supp f \subset \Om_0$.
\end{coro}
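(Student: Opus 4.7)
The plan is to deduce the inequality directly from Theorem~\ref{Poincare}, bypassing the contradiction-and-compactness machinery used for Corollary~\ref{coro:poincare:0}. The key observation is that because $\supp f$ is trapped inside the \emph{fixed} compact set $\cl{\Om_0}\subset \Om$ (rather than a shrinking collar whose thickness depends on $n$), the Lebesgue mean $f_\Om$ is itself automatically controlled by $\|f-f_\Om\|_{L^p(\Om)}$. Consequently the full $L^p$ norm of $f$ can be bounded by $\|f-f_\Om\|_{L^p(\Om)}$ alone, and the Poincar\'e--Wirtinger-type estimate of Theorem~\ref{Poincare} closes the argument.

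First I would note that $\Om_0\subset\subset \Om$ ensures $m_0 \dfn |\Om\setminus \cl{\Om_0}|>0$, and that $\supp f\subset \Om_0$ forces $f=0$ a.e.\ on $\Om\setminus \cl{\Om_0}$. On that set $|f(\bfx)-f_\Om|^p=|f_\Om|^p$, so integrating gives $|f_\Om|^p\, m_0 \leq \|f-f_\Om\|^p_{L^p(\Om)}$. Next, the triangle inequality in $L^p$ combined with the elementary convexity bound $(a+b)^p\leq 2^{p-1}(a^p+b^p)$ yields
\[
\|f\|^p_{L^p(\Om)}
\leq 2^{p-1}\Bigl(\|f-f_\Om\|^p_{L^p(\Om)}+|f_\Om|^p|\Om|\Bigr)
\leq 2^{p-1}\Bigl(1+\tfrac{|\Om|}{m_0}\Bigr)\|f-f_\Om\|^p_{L^p(\Om)}.
\]

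Finally, applying Theorem~\ref{Poincare} to $f$ bounds $\|f-f_\Om\|^p_{L^p(\Om)}$ by $C_{p,d,\Om}$ times the double integral on the right-hand side of the claim, with a constant that is already uniform in $n$ (depending only on $p$, $d$, $\Om$, and the sequence $(\rho_{\del_n})$). Chaining the two estimates produces the advertised inequality with the explicit constant $C_{p,d,\Om,\Om_0}=2^{p-1}(1+|\Om|/m_0)\,C_{p,d,\Om}$.

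There is essentially no obstacle here beyond verifying $m_0>0$, which is immediate from $\cl{\Om_0}$ being a proper compact subset of the open set $\Om$, and checking that the support hypothesis is used in its a.e.\ form (the ``boundary layer'' $\Om\setminus \cl{\Om_0}$ has positive measure, on which $f$ vanishes a.e., making the mean $f_\Om$ pay a concrete $L^p$ ``price'' in the integral of $|f-f_\Om|^p$). The rigid compact support condition $\supp f\subset \Om_0$ lets us sidestep the compactness/extension-by-reflection argument that was required to handle the shrinking collar in Corollary~\ref{coro:poincare:0}.
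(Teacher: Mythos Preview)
Your argument is correct and takes a genuinely different route from the paper's.

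The paper proves this corollary by contradiction, reducing to Corollary~\ref{coro:poincare:0}: if the desired inequality failed one could extract a sequence $(f_n)$ with $\supp f_n\subset \Om_0$ violating it; since $\Om_0$ is fixed we eventually have $\Om_0\subseteq \Om_{\del_n}$, so the same sequence would violate the already-established ``shrinking collar'' Poincar\'e inequality. That route ultimately rests on the compactness machinery of Corollary~\ref{coro:UniformGrad} (and hence on Theorem~\ref{thm:UniformGrad}).

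Your approach is more elementary: you exploit the fixed positive-measure region $\Om\setminus\cl{\Om_0}$ on which $f$ vanishes to bound the mean $|f_\Om|^p$ by $\|f-f_\Om\|_{L^p}^p/m_0$, and then feed this directly into Theorem~\ref{Poincare}. This yields an \emph{explicit} constant $C_{p,d,\Om,\Om_0}=2^{p-1}\big(1+|\Om|/m_0\big)C_{p,d,\Om}$, avoids any contradiction or compactness argument, and in particular does not pass through Corollary~\ref{coro:UniformGrad}. As a bonus, your proof literally stays within the Lipschitz-domain hypothesis of Theorem~\ref{Poincare}, whereas the paper's reduction leans on a corollary whose proof invoked the $C^1$ setting of Theorem~\ref{thm:UniformGrad}.
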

\begin{proof}
The proof by contradiction reduces to a construction of a sequence $(f_n)$ in $L^p(\Om)$ that violates  Corollary \ref{coro:poincare:0}. Since $\Om_0\subset \subset \Om$ is fixed, then we may assume that each $\Om_0 \subseteq \Om_{\del_n}$ for all $n$. 
\end{proof}


\section{Convergence of the nonlocal operators}

\subsection{Scaled operators}
With appropriate scaling, the nonlocal Laplace and biharmonic operators converge to their local analogues.  Throughout this subsection suppose Assumption \ref{as:alpha} holds for kernel $\bfal_\del$ and 
\[
\mu_\delta(\bfx,\bfy) \dfn |\bfal_\delta(\bfx,\bfy)|^2 = \frac{\rho_\del(|\bfx-\bfy|)}{|\bfx-\bfy|^2}\,.
\] 
Since this is a radial function we may write with a slight abuse of notation:
\[
 \mu_\del(\bfx,\bfy) = \mu_\del(|\bfx-\bfy|) \txtand    \mu_\del(s) = \frac{\rho_\del(s)}{s^2}.
\]

\begin{definition}[Scaling] Let
\begin{equation}\label{pi}
\pi_\del(r) := \int_r^{\delta}s \mu_{\delta}(s)  d s\,.
\end{equation}
Let $\om_{d-1}$ be the surface measure of unit sphere in $\bbR^d$ and define
\begin{equation}\label{Cdelta}
C(\delta) \dfn 
\half \int_{B_\delta(0)}   \pi(|\mathbf{z}|) d\mathbf{z} =
\frac{1}{2} \omega_{d-1} \int_0^\delta \pi(r) r^{d-1} d r,
\end{equation}
which is finite for $d \geq 2$. 
\end{definition}

\begin{proposition}\label{prop:Cdelta}
Let $\delta > 0$ and $C(\delta)$ be given by \eqref{Cdelta}. Then 
\[
C(\delta) = 1/(2d)
\]
\end{proposition}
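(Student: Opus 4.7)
The plan is a direct computation exploiting Fubini's theorem and the normalization of the mollifier $\rho_\delta$. Starting from the definition
\[
C(\delta) = \frac{1}{2}\omega_{d-1}\int_0^\delta \pi_\delta(r)\, r^{d-1}\, dr = \frac{1}{2}\omega_{d-1}\int_0^\delta \left(\int_r^\delta \frac{\rho_\delta(s)}{s}\, ds\right) r^{d-1}\, dr,
\]
I would interchange the order of integration. The region $\{(r,s): 0<r<\delta,\ r<s<\delta\}$ is equivalently described by $\{0<s<\delta,\ 0<r<s\}$, so Fubini (applicable since the integrand is nonnegative) gives
\[
C(\delta) = \frac{1}{2}\omega_{d-1}\int_0^\delta \frac{\rho_\delta(s)}{s}\left(\int_0^s r^{d-1}\, dr\right) ds = \frac{\omega_{d-1}}{2d}\int_0^\delta \rho_\delta(s)\, s^{d-1}\, ds.
\]

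Next I would invoke the normalization of the mollifier. By Assumption \ref{as:alpha}, $\int_{\mathbb{R}^d}\rho_\delta(|\mathbf{x}|)\, d\mathbf{x}=1$, and passing to polar coordinates gives $\omega_{d-1}\int_0^\delta \rho_\delta(s)\, s^{d-1}\, ds = 1$. Substituting this into the display above yields $C(\delta) = 1/(2d)$, as desired.

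There is essentially no obstacle here; the only point requiring minor care is confirming that the Fubini swap is legitimate, which is immediate because $\rho_\delta \geq 0$ and the support of $\rho_\delta$ lies in $[0,\delta)$, so all iterated integrals are finite for $d\geq 2$ (consistent with the finiteness of $C(\delta)$ already noted). No additional regularity or analytic machinery is needed.
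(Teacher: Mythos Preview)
Your proof is correct and follows essentially the same approach as the paper: expand the definition, swap the order of integration via Fubini, integrate $r^{d-1}$ over $[0,s]$, and then use the normalization $\int_{\mathbb{R}^d}\rho_\delta(|\bfx|)\,d\bfx=1$ in polar coordinates to conclude.
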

\begin{proof}
By appealing to the definitions of $C(\delta),\pi_\delta(r), \mu_\delta(s)$ and changing the order of integration, we obtain 
\begin{equation*}
\begin{split}
C(\delta) =& \frac{\omega_{d-1}}{2} \int_0^\delta \int_r^\delta \frac{\rho_\delta(s)}{s} r^{d-1} ds dr 
= \frac{\omega_{d-1}}{2} \int_0^\delta \int_0^s r^{d-1} \frac{\rho_\delta(s)}{s} dr ds \\
=& \frac{\omega_{d-1}}{2d} \int_0^\delta s^{d-1} \rho_\delta(s) ds\,.
\end{split}
\end{equation*}
The result follows since by construction $\int_{\bbR^d}\rho_\del(\bfx) d\bfx = \int_{B_\del(0)} \rho_\delta(\bfx) d\bfx = 1$.
\end{proof}
In light of the previous proposition we can write $C$ rather than $C(\delta)$ in \eqref{Cdelta} and introduce
\begin{equation}\label{def:scaling}
\sig \dfn C^{-1} = 2d\,.
\end{equation}
Accordingly, 
we  redefine the nonlocal Laplacian and biharmonic operators with the scaling term:
\begin{eqnarray}\label{scaledop}
\cL_{\bfal_\delta} u(\bfx) &\dfn& \sigma \intOm [u(\bfy)-u(\bfx)] \mu_\delta (\bfx,\bfy) d\bfy \\
\mathcal{B}_{\bfal_\delta}u(\bfx) &\dfn & \cL_{\bfal_\delta} \left[ \cL_{\bfal_\delta} [u] (\bfx) \right] .
\end{eqnarray}

\subsection{Pointwise convergence}

We will show that the nonlocal operators approach uniformly their classical versions when acting on smooth functions as the peridynamic horizon $\delta$ goes to zero.  The proofs of the following results were inspired by the strategy used in the upcoming paper
\cite{fos-rad}.

\begin{lemma}\label{Lint}
Let $\Omega \subset \mathbb{R}^{d \geq 2}$ be bounded and open, $u \in C^2(\Omega)\cap W^{1,\infty}(\Om)$. Further suppose $\bfal_\delta$ satisfies Assumption \ref{as:alpha}. Then for any $\bfx\in \Om$  and all $\delta > 0$ such that $B_\delta(\bfx) \subset \Omega$,
\begin{equation*}
\begin{split}
&\cL_{\bfal_\delta} [u](\bfx) = \sigma \int_0^1 \int_{B_{\delta}(0)} s \bigg(\Delta u(\bfx+s\mathbf{z})-\Delta u(\bfx)\bigg) \pi(|\mathbf{z}|) d\mathbf{z} ds +\Delta u(\bfx)\,.
\end{split}
\end{equation*}
\end{lemma}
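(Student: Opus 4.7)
The plan is to start from the definition \eqref{scaledop}, reduce the integral to a ball via the compact support of $\rho_\del$, then use a fundamental theorem of calculus trick combined with a radial integration-by-parts to trade the singular kernel $\mu_\del$ for the weight $\pi_\del$ and produce a Laplacian.

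Since $B_\del(\bfx)\subset\Om$ and $\operatorname{supp}\rho_\del\subset[0,\del)$, I would first write
\[
\cL_{\bfal_\delta}[u](\bfx)=\sigma\int_{B_\del(0)}\bigl[u(\bfx+\bfz)-u(\bfx)\bigr]\mu_\del(|\bfz|)\,d\bfz
\]
by the change of variables $\bfz=\bfy-\bfx$, and then use the fundamental theorem of calculus $u(\bfx+\bfz)-u(\bfx)=\int_0^1\nabla u(\bfx+s\bfz)\cdot\bfz\,ds$, which is valid since $u\in W^{1,\infty}(\Om)$. Fubini (justified by the boundedness of $\nabla u$ together with the $L^1$ estimate $\int_{B_\del}|\bfz|\mu_\del(|\bfz|)\,d\bfz=\int_{B_\del}\rho_\del(|\bfz|)/|\bfz|\,d\bfz<\infty$ for $d\ge 2$) lets me rewrite
\[
\cL_{\bfal_\delta}[u](\bfx)=\sigma\int_0^1\!\int_{B_\del(0)}\nabla u(\bfx+s\bfz)\cdot\bigl[\bfz\,\mu_\del(|\bfz|)\bigr]d\bfz\,ds.
\]

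The central identity to exploit is that, by the definition \eqref{pi} of $\pi_\del$,
\[
\nabla_\bfz\pi_\del(|\bfz|)=\pi_\del'(|\bfz|)\tfrac{\bfz}{|\bfz|}=-|\bfz|\mu_\del(|\bfz|)\tfrac{\bfz}{|\bfz|}=-\bfz\,\mu_\del(|\bfz|).
\]
I would then integrate by parts over the annulus $B_\del(0)\setminus B_\eps(0)$ for small $\eps>0$: the boundary term on $\partial B_\del$ vanishes because $\pi_\del(\del)=0$, while on $\partial B_\eps$ the estimate $|\pi_\del(\eps)|\le\|\rho_\del\|_{L^\infty}\log(\del/\eps)$ combined with the surface measure $\om_{d-1}\eps^{d-1}$ and the $W^{1,\infty}$ bound on $\nabla u$ forces the contribution to be $O(\eps^{d-1}\log(1/\eps))\to 0$ for $d\ge 2$. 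Using $\operatorname{div}_\bfz[\nabla u(\bfx+s\bfz)]=s\Delta u(\bfx+s\bfz)$ (here $u\in C^2$ enters), I would obtain in the limit
\[
\int_{B_\del(0)}\nabla u(\bfx+s\bfz)\cdot\bfz\,\mu_\del(|\bfz|)\,d\bfz=s\int_{B_\del(0)}\pi_\del(|\bfz|)\,\Delta u(\bfx+s\bfz)\,d\bfz.
\]

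Finally, I would add and subtract $\Delta u(\bfx)$ inside the integrand to produce
\[
\cL_{\bfal_\del}[u](\bfx)=\sigma\int_0^1\!\int_{B_\del(0)}s\,\bigl(\Delta u(\bfx+s\bfz)-\Delta u(\bfx)\bigr)\pi_\del(|\bfz|)\,d\bfz\,ds+\sigma\,\Delta u(\bfx)\int_0^1 s\,ds\int_{B_\del(0)}\pi_\del(|\bfz|)\,d\bfz,
\]
and the last summand collapses to $\Delta u(\bfx)$ by Proposition \ref{prop:Cdelta} and the definition $\sigma=1/C=2d$, since $\int_{B_\del}\pi_\del\,d\bfz=2C$ and $\int_0^1 s\,ds=1/2$.

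The main technical obstacle is the integration by parts: the weight $\pi_\del$ is (mildly) singular at the origin and $\bfz\,\mu_\del(|\bfz|)$ is itself singular, so the exchange of differentiation and integration must be carried out on an annulus and passed to the limit, with careful attention to the $\eps^{d-1}\log(1/\eps)$ decay of the inner boundary contribution; everything else is bookkeeping (Fubini, change of variables, and the scaling identity of Proposition \ref{prop:Cdelta}).
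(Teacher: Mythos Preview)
Your proof is correct and follows essentially the same route as the paper: change of variables to $B_\del(0)$, fundamental theorem of calculus, the identity $\nabla_\bfz\pi_\del(|\bfz|)=-\bfz\,\mu_\del(|\bfz|)$, integration by parts, and then add/subtract $\Delta u(\bfx)$ to isolate the constant via Proposition~\ref{prop:Cdelta}. The one difference is that you are more careful than the paper about the integration by parts step: the paper simply invokes $\pi(\del)=0$ and writes the identity directly, whereas you work on the annulus $B_\del\setminus B_\eps$ and estimate the inner boundary term by $O(\eps^{d-1}\log(1/\eps))$, which is a legitimate refinement given the singularity of $\pi_\del$ at the origin.
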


\begin{proof}
We interpret $\cL_{\bfal_\del}$ as the weak nonlocal Laplacian, but since  $u$ is a fortiori in $W^{1,p}(\Om)$ then  $\cL_{\bfal}[u]\in \ltom$ by Proposition \ref{lapexists} and we may use the pointwise original definition according to Proposition \ref{prop:weak-strong}. Because the support of $\mu_{\delta}$ is contained in $B_{\delta}(\bfx)$, we have 
\begin{equation*}
\begin{split}
\cL_{\bfal_\delta} [u](\bfx) &= \sigma \int_{B_\delta(\bfx)} [u(\bfy)-u(\bfx)] \mu_\delta (\bfx,\bfy) d\bfy \\
&= \sigma \int_{B_\delta(\bfx)} \int_0^1 \frac{d}{ds} \left[ u(\bfx+s(\bfy-\bfx))\right]  \mu_\delta (\bfx, \bfy) ds d\bfy \\
&= \sigma \int_{B_\delta(\bfx)} \int_0^1 \nabla u(\bfx+s(\bfy-\bfx)) \cdot (\bfy-\bfx) \mu_\delta (\bfx, \bfy) ds d\bfy .
\end{split}
\end{equation*}
Because $u$ is Lipschitz and $(\bfy-\bfx)\mu_{\del}(\bfx,\bfy)$ is integrable, then we can change the order of integration and then apply substitution $\mathbf{z} = \bfy-\bfx$ to obtain
\begin{equation*}
\cL_{\bfal_\delta} [u](\bfx) = \sigma \int_0^1 \int_{B_\delta (0)} \nabla u(\bfx+s\mathbf{z}) \cdot [\mathbf{z} \mu_{\delta}(|\mathbf{z}|)] d\mathbf{z} ds .
\end{equation*}
With $\pi$ given by \eqref{pi} we know
\begin{equation*}
\nabla_\mathbf{z} \pi (|\mathbf{z}|) =\pi'(|\mathbf{z}|) \frac{\mathbf{z}}{|\mathbf{z}|} = - \mu_\delta(|\mathbf{z}|)\bfz
\end{equation*}
and consequently,
\begin{equation*}
\cL_{\bfal_\delta} [u](\bfx) = - \sigma \int_0^1 \int_{B_\delta (0)} \nabla u(\bfx+s\mathbf{z}) \cdot \nabla_\mathbf{z} \pi(|\mathbf{z}|) d\mathbf{z} ds .
\end{equation*}
Since $\pi(\delta) = 0$, then integration by parts gives
\begin{equation*}
\begin{split}
\cL_{\bfal_\delta} [u](\bfx) =&
\sigma \int_0^1 \int_{B_\delta(0)} \text{div}_\mathbf{z} [\nabla u(\bfx+s\mathbf{z})]  \pi(|\mathbf{z}|) d\mathbf{z} ds\\
=& \sigma \int_0^1 \int_{B_\delta(0)} \Delta u(\bfx+s\mathbf{z})\, s \, \pi(|\mathbf{z}|) d\mathbf{z} ds\,.
\end{split}
\end{equation*}
Let $\om_{d-1}$ be the surface measure of the unit sphere in $\bbR^d$. Using the identity
\begin{equation*}
\begin{split}
 \int_{B_\delta(0)}   \pi(|\mathbf{z}|) d\mathbf{z}  =   \int_0^\delta \pi(r) \om_{d-1} r^{d-1}  d r 
\end{split}
\end{equation*}
we may rewrite $\cL_{\bfal_{\del_n}} [u](\bfx)$ in the desired form:
\begin{equation*}
\begin{split}
\cL_{\bfal_\delta} [u](\bfx) =&  \sigma \int_0^1 \int_{B_\delta(0)} \left(\Delta u(\bfx+s\mathbf{z}) - \Delta u(\bfx) \right) s \pi(|\mathbf{z}|) d\mathbf{z} ds \\
& + \sigma \Delta u(\bfx) \frac{\om_{d-1}}{2} \int_0^\delta  \pi(r)  r^{d-1} dr. 
\end{split}
\end{equation*}
Then apply the fact that $\sig=2d$ and use the definition  \eqref{Cdelta} along with Proposition  \ref{prop:Cdelta} to establish that all constants in the right-most term cancel.
\end{proof}

\begin{theorem}[Convergence of the nonlocal Laplacian]\label{thm:convlap} Suppose $\Omega \subset \mathbb{R}^{d \geq 2}$ is bounded and open, and $\bfal_\delta$ satisfies Assumption \ref{as:alpha}.  Let $u \in C^4(\Omega)\cap W^{4,\infty}(\Om)$ and $\sigma = 2d$ in \eqref{scaledop}. Then there is  $K(u,d)>0$ dependent only on $u$ and $d$ such that
\begin{equation}\label{conv:ball}
|\cL_{\bfal_\delta}[u](\bfx) - \Delta u(\bfx) | \leq K(u,d)\delta^2 \quad \text{ whenever } B_{\del}(\bfx)\subset \Om\,.
\end{equation}
In addition
\begin{equation}\label{eqn:smallunibdd}
\sup_{\bfx\in \Om_{\del}} | \mathcal{L}_{\bfal_\delta}[u](\bfx) | \leq  \| \Delta u\|_{L^\infty(\Omega)} + K(u,d) \delta^2\,. 
\end{equation}
so as $\delta \to 0$
\begin{equation}\label{conv:strong-lap}
\chi_{\Om_\del}\Ln[u] \to \Del u \quad\text{strongly in}\quad L^2(\Om)\,.
\end{equation}

And, if $\supp u \subset \subset \Omega$ with $\delta < \frac{1}{2} \cdot \dist(\supp u, \partial \Omega)$ then
\begin{equation}\label{eqn:compactsuppbdd}
\sup_{\bfx \in \Omega} | \cL_{\bfal_\delta}[u] (\bfx) | \leq \| \Delta u \|_{L^\infty(\Omega)} + K(u,d) \cdot \dist(\supp u, \partial \Omega)^2\,.
\end{equation}
whence as $\delta \to 0$
\begin{equation}\label{conv:lap:compact}
\Ln[u] \to \Del u \quad\text{strongly in}\quad \ltom\,.
\end{equation}

\end{theorem}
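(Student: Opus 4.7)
The plan is to use Lemma \ref{Lint} as the key representation: whenever $B_\delta(\bfx) \subset \Omega$, the difference $\cL_{\bfal_\delta}[u](\bfx) - \Delta u(\bfx)$ equals
\[
\sigma \int_0^1 \int_{B_\delta(0)} s\bigl(\Delta u(\bfx+s\bfz) - \Delta u(\bfx)\bigr)\pi(|\bfz|)\,d\bfz\,ds.
\]
Since $u \in C^4 \cap W^{4,\infty}$, I would Taylor-expand the inner difference as
\[
\Delta u(\bfx+s\bfz) - \Delta u(\bfx) = s\bfz\cdot \nabla\Delta u(\bfx) + R(\bfx, s\bfz),
\]
with remainder $|R(\bfx, s\bfz)| \leq \tfrac{1}{2}\|\nabla^2 \Delta u\|_{L^\infty}\,s^2|\bfz|^2$. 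The first-order term is odd in $\bfz$ while the weight $\pi(|\bfz|)$ is radial and $B_\delta(0)$ is symmetric around the origin, so that contribution vanishes identically. The remainder then gives
\[
|\cL_{\bfal_\delta}[u](\bfx) - \Delta u(\bfx)| \leq \tfrac{\sigma}{2}\|\nabla^2 \Delta u\|_{L^\infty} \int_0^1 s^3\,ds \int_{B_\delta(0)} |\bfz|^2\,\pi(|\bfz|)\,d\bfz.
\]
The moment on the right is bounded via $|\bfz|^2 \leq \delta^2$ on the support together with $\tfrac{\omega_{d-1}}{2}\int_0^\delta \pi(r)r^{d-1}dr = C(\delta) = 1/(2d)$ from Proposition \ref{prop:Cdelta}, yielding the pointwise estimate \eqref{conv:ball} with $K(u,d)$ depending only on $\|u\|_{W^{4,\infty}}$ and $d$.

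For \eqref{eqn:smallunibdd} I would simply combine \eqref{conv:ball}, which applies on $\Om_\delta$ since $B_\delta(\bfx) \subset \Om$ there, with the triangle inequality against $\|\Delta u\|_{L^\infty}$. To obtain the strong convergence \eqref{conv:strong-lap}, I would split
\[
\|\chi_{\Om_\delta}\Ln[u] - \Delta u\|_{L^2(\Om)}^2 = \int_{\Om_\delta} |\Ln[u] - \Delta u|^2 + \int_{\Om\setminus\Om_\delta} |\Delta u|^2.
\]
The first integrand is bounded by $K(u,d)^2\delta^4$ uniformly by \eqref{conv:ball}, so its contribution is $O(\delta^4|\Om|)$. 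The second is bounded by $\|\Delta u\|_{L^\infty}^2\,|\Om\setminus\Om_\delta|$, which vanishes as $\delta \to 0$ because $|\Om\setminus\Om_\delta| \to 0$ for a bounded $\Omega$.

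The compact-support assertions \eqref{eqn:compactsuppbdd}--\eqref{conv:lap:compact} I would handle by observing that the distance constraint $\delta < \tfrac{1}{2}\dist(\supp u, \partial \Om)$ forces the pointwise bound to extend to all of $\Om$. Indeed, for $\bfx \in \Om_\delta$ the estimate \eqref{conv:ball} applies directly; whereas for $\bfx \in \Om\setminus\Om_\delta$, the triangle inequality gives $\dist(\bfx, \supp u) \geq \dist(\supp u, \partial \Om) - \dist(\bfx, \partial \Om) > \delta$, so $u$ vanishes on all of $B_\delta(\bfx) \cup \{\bfx\}$, forcing both $\cL_{\bfal_\delta}[u](\bfx)=0$ and $\Delta u(\bfx)=0$. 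Hence the pointwise difference is controlled by $K(u,d)\delta^2 \leq K(u,d)\dist(\supp u,\partial\Om)^2$ uniformly on $\Om$, which produces \eqref{eqn:compactsuppbdd} and, after multiplying the uniform bound $K(u,d)\delta^2$ by $|\Om|^{1/2}$, delivers the $L^2$ convergence \eqref{conv:lap:compact}.

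The main technical step is the moment computation together with the odd-function cancellation under the radial weight; everything else is a bookkeeping exercise once Lemma \ref{Lint} and Proposition \ref{prop:Cdelta} are in hand. I anticipate no serious obstacle beyond being careful about which $\bfx$ allow the pointwise representation to be invoked, which is precisely the role of splitting $\Om$ into $\Om_\delta$ and its complement.
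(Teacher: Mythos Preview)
Your proof is correct and follows essentially the same approach as the paper: both start from Lemma \ref{Lint}, exploit the odd-symmetry cancellation of the linear term against the radial weight $\pi(|\bfz|)$, bound the remaining second-order remainder using $u\in C^4\cap W^{4,\infty}$, and then invoke Proposition \ref{prop:Cdelta} to convert the $|\bfz|^2$-moment into $\delta^2$. The only cosmetic difference is that the paper first integrates by parts in $s$ (writing $s = \frac{d}{ds}\tfrac{s^2-1}{2}$) to pass to $\nabla\Delta u$ before subtracting the odd term and using Lipschitz continuity of $\nabla\Delta u$, whereas your direct Taylor expansion of $\Delta u$ to second order achieves the same end a bit more transparently; the remaining parts \eqref{eqn:smallunibdd}--\eqref{conv:lap:compact} are handled identically.
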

\begin{proof}
Let $\delta>0$ be sufficiently small so that $B_\delta(\bfx) \subset \Omega$. Then by Lemma \ref{Lint} 
\begin{equation*}
\begin{split}
\cL_{\bfal_\delta} [u](\bfx) 
&= \sigma \int_0^1 \int_{B_{\delta}(0)} s[\Delta u(\bfx+s\mathbf{z})-\Delta u(\bfx)] \pi(|\mathbf{z}|) d\mathbf{z} ds + \Delta u(\bfx)\,.
\end{split}
\end{equation*}
Thus,
\begin{equation*}
\cL_{\bfal_\delta} [u](\bfx) - \Delta u(\bfx) =  \sigma \int_0^1 \int_{B_{\delta}(0)} s \big[\Delta u(\bfx+s\mathbf{z})-\Delta u(\bfx) \big] \pi(|\mathbf{z}|) d\mathbf{z} ds\,.
\end{equation*}
We will let $P_i(s)$ be a polynomial of degree $i$ in $s$, that will be chosen appropriately.  Rewrite $s = \frac{d}{ds}(\frac{s^2-1}{2})= : P_2'(s)$ and integrate by parts in $s$ using $\frac{d}{ds}(\Delta u(\bfx+s\mathbf{z})) = \Delta\nabla u(\bfx+s\mathbf{z})\cdot \mathbf{z})$ in order to obtain
\begin{equation*}
\begin{split}
\cL_{\bfal_\delta} [u](\bfx) - \Delta u(\bfx) &=  -\sigma \int_{B_{\delta}(0)} \int_0^1 P_2(s)[\Delta \nabla u(\bfx+s\mathbf{z}) ] \cdot \mathbf{z} \pi(|\mathbf{z}|) ds d\mathbf{z} \\
&= -\sigma \int_{B_{\delta}(0)} \int_0^1 P_2(s)[\Delta  \nabla u(\bfx+s\mathbf{z})- \Delta \nabla u(\bfx) ] \cdot \mathbf{z} \pi(|\mathbf{z}|) ds d\mathbf{z}
\end{split}
\end{equation*}
where the last step follows from 
\begin{equation*}
\int_{B_\delta(0)} \mathbf{z} \pi(|\mathbf{z}|) d\mathbf{z} = 0.
\end{equation*}
Since fourth-order derivatives of $u$ are bounded, then we know that $|\Delta \nabla u(\bfx+s\mathbf{z}) - \Delta \nabla u(\bfx)| \leq M (u) |s\mathbf{z}|$ for some constant $M(u)$. Thus, we obtain
\begin{equation*}
\begin{split}
|\cL_{\bfal_\delta} [u](\bfx) - \Delta u(\bfx)| &\leq  \sigma M(u) \int_{B_{\delta}(0)} \int_0^1 |P_2(s) s| |\mathbf{z}|^2 \pi(|\mathbf{z}|) ds d\mathbf{z} \\
&\lesssim \sigma M(u)  \half \omega_{d-1} \int_0^\delta \rho^{n+1} \pi(\rho) d \rho.
\end{split}
\end{equation*}
Finally, use the fact that $\rho^{n+1} \leq \rho^{d-1} \delta^2$ for $\rho\in(0,\delta)$ 
and definition \eqref{Cdelta} along with Proposition \ref{prop:Cdelta} to obtain
\begin{equation}\label{lapbd}
\begin{split}
|\cL_{\bfal_\delta} [u](\bfx) - \Delta u(\bfx)| \leq K(u,d) \delta^2 \sig C = K(u,d) \delta^2\,.
\end{split}
\end{equation}

For any $\bfx \in \Omega_{\delta}$, we know \eqref{lapbd} holds since $B_\delta(x) \subset \Omega$ (note the same $\delta$ is valid all $\bfx \in \Omega_{\delta}$). Thus
\begin{equation*}
\begin{split}
\| \cL_{\bfal_\delta} [u](\bfx) \|_{L^\infty(\Omega_{\delta})} \leq  \| \Delta u\|_{L^\infty(\Omega_{\delta})} + K(u,d) \delta^2\,.
\end{split}
\end{equation*}
This supplies a uniform bound on $\chi_{\Om_\del} \Ln[u]$, so the convergence result \eqref{conv:strong-lap} follows from \eqref{conv:ball} which applies on  $\Om_\del$ and the fact that the measure of $\Om\setminus \Om_\del$ tends to $0$.


To verify \eqref{eqn:compactsuppbdd} take $\delta \leq \frac{1}{2} \cdot \dist(\supp u, \partial \Omega)$. When $\bfx \in \Omega \setminus \Omega_\delta$ notice that $\Delta u(\bfx) = \cL_{\bfal_\delta}[u](\bfx) = 0$. Wheres for all $\bfx \in \Omega_\delta$  estimate \eqref{lapbd} holds (note $\delta$ does not depend on $\bfx$ here). Thus for all $\delta \leq \frac{1}{2} \cdot d(\supp u, \partial \Omega)$ we have 
\[
\sup_{\bfx \in \Omega} | \cL_{\bfal_\delta}[u] (\bfx) | \leq \| \Delta u \|_{L^\infty(\Omega)} + K(u,d) \cdot d(\supp u, \partial \Omega)^2\,.
\]
The assertion of a compact support simplifies 
\eqref{conv:strong-lap} to \eqref{conv:lap:compact}.
\end{proof}

\begin{theorem}[Convergence of the  nonlocal biharmonic]\label{opconvbi} Suppose $\Omega \subset \mathbb{R}^{d \geq 2}$ is bounded and open, $\bfx \in \Omega$, and $\bfal_\delta$ satisfies Assumption \ref{as:alpha}.  Let $u \in C^5(\Omega)\cap W^{5,\infty}(\Om)$ and $\sigma = 2d$. Then there is a constant $K(u,d)>0$  dependent only on $u$ and $d$, such that
\[
|\mathcal{B}_{\bfal_\delta}[u](\bfx) - \Delta^2 u(\bfx) | \leq K(u,d)\delta
\] 
whenever $B_\delta(\bfx) \subset \Omega$.
Moreover, if $\supp u \subset \subset \Omega$ and $\delta < \frac{1}{3} \cdot d(\supp u, \partial \Omega)$ then the estimate is uniform in $\bfx$ and 
\[
\sup_{x \in \Omega} | \mathcal{B}_{\bfal_\delta}[u] | \leq \| \Delta^2 u \|_{L^\infty(\Omega)} + K(u,d) \cdot d(\supp u, \partial \Omega)\,.
\]
\end{theorem}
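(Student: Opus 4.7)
The plan is to mimic the strategy of Theorem \ref{thm:convlap} at two levels, reducing the biharmonic error to two pieces: the outer nonlocal Laplacian applied to the first-level residual $R_\delta := \cL_{\bfal_\delta}[u] - \Delta u$, and a single-level nonlocal Laplacian error at $\Delta u$. Concretely, I would write
\[
\mathcal{B}_{\bfal_\delta}[u](\bfx) - \Delta^2 u(\bfx) \;=\; \cL_{\bfal_\delta}[R_\delta](\bfx) \;+\; \big(\cL_{\bfal_\delta}[\Delta u](\bfx) - \Delta^2 u(\bfx)\big),
\]
and bound each piece by $K(u,d)\,\delta$.

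For the second piece, note that $u\in C^5\cap W^{5,\infty}$ yields $\Delta u\in C^3\cap W^{3,\infty}$, which is one derivative short of the $C^4\cap W^{4,\infty}$ hypothesis of Theorem \ref{thm:convlap}. However, the integration-by-parts-in-$s$ step in that proof was precisely what produced the $\delta^2$ rate; if one stops after Lemma \ref{Lint} and invokes only the Lipschitz estimate $|\Delta^2 u(\bfx+s\mathbf{z})-\Delta^2 u(\bfx)| \leq M|s\mathbf{z}|$ (which holds because $\Delta^2 u \in W^{1,\infty}$), the same argument yields the first-order bound
\[
|\cL_{\bfal_\delta}[\Delta u](\bfx) - \Delta^2 u(\bfx)| \leq K(u,d)\,\delta, \qquad B_\delta(\bfx)\subset\Omega,
\]
which is the rate appearing in the statement and explains why one cannot hope for $\delta^2$ without raising the regularity of $u$.

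The crux is the bound on $\cL_{\bfal_\delta}[R_\delta](\bfx)$. By Lemma \ref{Lint},
\[
R_\delta(\bfx) = \sigma\int_0^1\int_{B_\delta(0)} s\,[\Delta u(\bfx+s\mathbf{z})-\Delta u(\bfx)]\,\pi(|\mathbf{z}|)\,d\mathbf{z}\,ds.
\]
Differentiating twice under the integral in $\bfx$ (legal since $u\in C^5$) and using the Lipschitz bound $|D^2\Delta u(\bfx+s\mathbf{z})-D^2\Delta u(\bfx)|\leq M|s\mathbf{z}|$ provided by $u\in W^{5,\infty}$, the integrand acquires an extra factor of $|s\mathbf{z}|$, yielding
\[
\|D^2 R_\delta\|_{L^\infty(\Omega_{2\delta})} \leq K(u,d)\,\delta.
\]
With $B_{2\delta}(\bfx)\subset\Omega$ so that $R_\delta$ is $C^2$ on $B_\delta(\bfx)$, I would Taylor-expand $R_\delta$ to second order about $\bfx$; the linear contribution integrates to zero against the radial weight $\mu_\delta(\bfx,\bfy)$, giving
\[
|\cL_{\bfal_\delta}[R_\delta](\bfx)| \leq \tfrac{\sigma}{2}\,\|D^2 R_\delta\|_{L^\infty}\!\!\int_{B_\delta(0)}|\mathbf{z}|^2\mu_\delta(|\mathbf{z}|)\,d\mathbf{z} = d\,\|D^2 R_\delta\|_{L^\infty}\cdot 1 \leq K(u,d)\,\delta,
\]
where I used $\int_{B_\delta(0)}|\mathbf{z}|^2\mu_\delta(|\mathbf{z}|)\,d\mathbf{z} = \int_{B_\delta(0)}\rho_\delta(|\mathbf{z}|)\,d\mathbf{z} = 1$.

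The main obstacle I anticipate is the estimate $\|D^2 R_\delta\|_{L^\infty}\leq K\delta$: one must commute two derivatives past the iterated integral in Lemma \ref{Lint} and then carefully track that one derivative is effectively ``spent'' in producing a first-difference structure, while the remaining derivative yields a Lipschitz bound that supplies the factor $\delta$. A secondary point is that the outer nonlocal Laplacian in $\mathcal{B}_{\bfal_\delta}$ samples $\cL_{\bfal_\delta}[u](\bfy)$ for $\bfy\in B_\delta(\bfx)\cap\Omega$, and Lemma \ref{Lint}'s pointwise representation at $\bfy$ requires $B_\delta(\bfy)\subset\Omega$; this is naturally guaranteed by $B_{2\delta}(\bfx)\subset\Omega$, and in the compact-support case the restriction $\delta<\tfrac{1}{3}\dist(\supp u,\partial\Omega)$ forces this at every $\bfx$ where either $\mathcal{B}_{\bfal_\delta}[u]$ or $\Delta^2 u$ can be nonzero, yielding the claimed uniform bound.
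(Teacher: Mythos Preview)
Your decomposition $\mathcal{B}_{\bfal_\delta}[u]-\Delta^2 u = \cL_{\bfal_\delta}[R_\delta] + (\cL_{\bfal_\delta}[\Delta u]-\Delta^2 u)$ is exactly the one the paper arrives at (its equation \eqref{bibdd2} is this identity after two uses of Lemma \ref{Lint}), and your treatment of the second summand via Lemma \ref{Lint} plus the Lipschitz bound on $\Delta^2 u$ is the same as the paper's handling of its ``boxed'' term.

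The only difference is in how the piece $\cL_{\bfal_\delta}[R_\delta]$ is controlled. The paper writes $R_\delta=\int_0^1 s\,F_s\,ds$ with $F_s(\bfx)=\sigma\int_{B_\delta(0)}[\Delta u(\bfx+s\bfz)-\Delta u(\bfx)]\pi(|\bfz|)\,d\bfz$, observes that $\cL_{\bfal_\delta}[R_\delta]=\int_0^1 s\,\cL_{\bfal_\delta}[F_s]\,ds$, and applies Lemma \ref{Lint} a second time to $F_s$; the resulting expression is then bounded using $|\Delta F_s|\le 2|s|\delta M$. You instead differentiate the Lemma \ref{Lint} representation twice in $\bfx$ to get $\|D^2 R_\delta\|_{L^\infty}\le K\delta$ and then Taylor-expand $R_\delta$ directly against the radial weight $\mu_\delta$, using symmetry to kill the linear term and $|\bfz|^2\mu_\delta(|\bfz|)=\rho_\delta(|\bfz|)$ to tame the singularity. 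Both routes are correct and require the same regularity $u\in C^5\cap W^{5,\infty}$; your Taylor argument is a shade more elementary since it avoids a second invocation of Lemma \ref{Lint}, while the paper's version keeps everything in the ``$\pi$-weight'' framework. Your observation that the argument tacitly needs $B_{2\delta}(\bfx)\subset\Omega$ (to apply the Lemma \ref{Lint} representation at every $\bfy\in B_\delta(\bfx)$) applies equally to the paper's proof, and your remark about the factor $\tfrac{1}{3}$ in the compact-support statement is precisely the fix.
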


\begin{proof}
Function $u$ has sufficient regularity to expand the  weak biharmonic according to its definition:
\begin{equation*}
B_{\bfal_\delta} [u] (\bfx) =  \sigma \int_{B_\delta(\bfx)} \left[ \cL_{\bfal_\delta} [u](\bfy) - \cL_{\bfal_\delta} [u](\bfx) \right] \mu_\delta (\bfx,\bfy) d\bfy .
\end{equation*}
Appealing to Lemma \ref{Lint}, and canceling $\sig$ with $C$, yields
\begin{equation}\label{ConvBi}
\begin{split}
B_{\bfal_\delta} [u] (\bfx)
%
=& \sigma \int_{B_\delta(\bfx)} \Bigg[ \Delta u(\bfy)  - \Delta u(\bfx)   \\
& \qquad + \sigma \int_0^1 \int_{B_\delta(0)} s[\Delta u(\bfy+s\mathbf{z}) -\Delta u(\bfy) ] \pi(|\mathbf{z}|) d\mathbf{z} ds  \\ 
& \qquad -  \sigma \int_0^1 \int_{B_\delta(0)} s[\Delta u(\bfx+s\mathbf{z})-\Delta u(\bfx)] \pi(|\mathbf{z}|) d\mathbf{z} ds  \Bigg] \mu_\delta (\bfx,\bfy) d\bfy .
\end{split}
\end{equation}
The first term in the above equation can be simplified using the definition of (scaled) nonlocal Laplacian $\cL_{\bfal_\delta}$:
\begin{equation*}
\sigma \int_{B_\delta(\bfx)} \left[   \Delta u(\bfy)  - \Delta u(\bfx)  \right] \mu_\delta (\bfx,\bfy) d\bfy =  \cL_{\bfal_\delta} \left[\Delta u \right] (\bfx) .
\end{equation*}
From Lemma \ref{Lint}, again, we obtain
\begin{equation*}
\begin{split}
 \cL_{\bfal_\delta} \left[\Delta u\right] (\bfx) =&  \fmbox{ \sigma \int_0^1 \int_{B_\delta(0)} s[\Delta^2 u(\bfx+s\mathbf{z})-\Delta^2 u(\bfx)] \pi(|\mathbf{z}|) d\mathbf{z} ds } +  \Delta^2 u(\bfx).
\end{split}
\end{equation*}
Substituting this back into \eqref{ConvBi} results in
\begin{equation}\label{bibdd2}
\begin{split}
&\mathcal{B}_{\bfal_\delta} [u](\bfx) - \Delta^2 u(\bfx)\\
=&  \sigma \int_{B_\delta(\bfx)}
\bigg[ \sigma \int_0^1 \int_{B_\delta(0)} s[\Delta u(\bfy+s\mathbf{z})-\Delta u(\bfy)] \pi(|\mathbf{z}|) d\mathbf{z} ds   \\
&\qquad \qquad -  \sigma \int_0^1 \int_{B_\delta(0)} s[\Delta u(\bfx+s\mathbf{z})-\Delta u(\bfx)] \pi(|\mathbf{z}|) d\mathbf{z} ds  \bigg] \mu_\delta (\bfx,\bfy) d\bfy \\
&+ \fmbox{\sigma \int_0^1 \int_{B_\delta(0)} s[\Delta^2 u(\bfx+s\mathbf{z})-\Delta^2 u(\bfx)] \pi(|\mathbf{z}|) d\mathbf{z} ds}.  
\end{split}
\end{equation}
Demonstrating that the boxed term is of order $\del$ is a simplified version of the argument necessary for the first two integrals (which incorporate the non-integrable kernel $\mu_\del$). The rest of the proof will focus on the first two summands in \eqref{bibdd2}.

For $s$ introduce 
\[
 F_{s}(\bfx) \dfn \sigma \int_{B_{\delta}(0)}[\Del u (\bfx + s \bfz) - \Del u(\bfx)]\pi(\bfz) d\bfz\,.
\]
and rewrite the first two terms on the right in \eqref{bibdd2} as 
$\ds \int_0^1 s \cL_\del [F_{s}](\bfx)  \; ds  $.
By Lemma \ref{Lint} 
\begin{equation}\label{bibdd3}
\begin{split}
&\int_0^1 s \cL_\del [F_{s}](\bfx)  \;  ds    \\
=& \int_0^1 s \sigma \int_0^1 \int_{B_\delta(0)} \tilde{s} \left( \Delta F_s (\bfx + \tilde{s} \tilde{\bfz}) - \Delta F_s(\bfx)\right) \pi(|\tilde{\bfz}|) d \tilde{\bfz} d \tilde{s} + \Delta F_s(\bfx) ds \\
\end{split}
\end{equation}
Recall $u \in C^5(\Omega)\cap W^{5,\infty}(\Om)$,  whence there exists $M>0$ such that 
\[
\sup_{0 \leq i \leq 4} \big\|D^i \left( u(\bfx+s\mathbf{z}) -  u(\bfx) \right) \big\|_{\bbR^{n^i}} \leq M | s \mathbf{z}|.
\]
Therefore, 
\begin{equation}\label{fbound}
\begin{split}
| \Delta F_s(\bfx) | \leq& \sigma \int_{B_\delta(0)} \left| \Delta^2 u(\bfx + s \bfz) - \Delta^2 u (\bfx) \right|  \pi(\bfz) d \bfz \\
\leq& \sigma M \int_{B_\delta(0)} |s \bfz| \pi(\bfz) d \bfz \\
\leq& \sigma M |s| \omega_{d-1} \int_{B_\delta(0)} r^d \pi(r) dr \quad \quad \text{(change of variables)} \\
\leq& |s| \delta M \sigma \omega_{d-1} \int_{0}^\delta r^{d-1} \pi(r) dr \\  
\leq& 2 |s| \delta M \quad \quad (\text{By definition of } \sigma)\,.
\end{split}
\end{equation}

By applying \eqref{fbound} to \eqref{bibdd3} we obtain the bound
\begin{equation*}\label{finaleq}
\begin{split}
\int_0^1 s \cL_\del [F_{s}](\bfx)  \;  ds \leq& \int_0^1 s \sigma \int_0^1 \int_{B_\delta(0)} \tilde{s} \left( 4 |s| \delta M \right) \pi(|\tilde{\bfz}|) \; d \tilde{z} d \tilde{s} + 2|s| \delta M \; ds \\
\leq& M' \delta \left( \sigma \int_{B_\delta(0)} \pi(|\tilde{\bfz}|) \; d \tilde{\bfz} +1 \right) \\
\leq& 3M' \delta\,.
\end{split}
\end{equation*}

%
%
%
Combining \eqref{bibdd2} with \eqref{finaleq}, we finally arrive at
\begin{equation}\label{bibd}
|\mathcal{B}_{\bfal_\delta} [u](\bfx) - \Delta^2 u(\bfx)| \leq  K(n,u) \cdot \del\,.
\end{equation}

Now to verify the uniform bound. Suppose $\delta < \frac{1}{3} \cdot d(\supp u, \partial \Omega)$. First consider $\bfx \in \Omega \backslash \Omega_\delta$. Since $B_\delta(\bfx) \cap \supp(u) = \emptyset$ we have
\[
\mathcal{L}_{\bfal_\delta}[u](\bfx) =  \sigma \int_{B_\delta(\bfx)} (u(\bfy)-u(\bfx)) \mu(\bfx,\bfy) d\bfy = 0\,.
\]  
Likewise, for any $\bfy \in B_\delta(\bfx)$ we have $B_\delta(\bfy) \cap \supp(u) = \emptyset$. Consequently,
\[
\mathcal{L}_{\bfal_\delta}[u](\bfy) =  \sigma \int_{B_\delta(\bfy)} (u(\bfz)-u(\bfy)) \mu(\bfy,\bfz) d \bfz = 0\,.
\]
We conclude, for any $\bfx \in \Omega \backslash \Omega_\delta$ we have
\[
\mathcal{B}_{\bfal_\delta}[u](\bfx) =  \sigma \int_{B_\delta(\bfx)} (\mathcal{L}_{\bfal_\delta}[u](\bfy) - \mathcal{L}_{\bfal_\delta}[u](\bfx)) \mu(\bfx,\bfy) d \bfy = 0\,.
\]
Thus $\mathcal{B}_{\bfal_\delta}[u]$ and $\Delta^2 u$ agree on $\Omega \backslash \Omega_\delta$. Also notice that $\forall \bfx \in \Omega_\delta$ we have $B_\delta(\bfx) \subset \Omega$ and so \eqref{bibd} holds (note $\delta$ does not depend on $\bfx$ here). Thus for all $\delta < \frac{1}{2} \cdot d(\supp u, \partial \Omega)$ we have 
\[
\sup_{\bfx \in \Omega} |\mathcal{B}_{\bfal_\delta}[u]| \leq \| \Delta^2 u \|_{L^\infty(\Omega)} + K(u,d) \cdot d(\supp u, \partial \Omega).
\]

\end{proof}

\section{Well-posedness of the nonlocal steady state problem}

Throughout this section $\Om$ is any bounded open set in $\bbR^d$ for $d \geq 2$. The first problem we will look at is the nonlocal elliptic biharmonic equation 
\begin{equation}\label{hinged}
\mathcal{B}_{\bfal} [u] = f \txtin \Omega' \\
\end{equation}
with nonlocal equivalent of hinged or clamped boundary conditions  (Definition \ref{def:hinged-clamped}):
\begin{equation}\label{bc}
u \in  \bfH  = \scrH^2_{\bfal, H} \txtor \scrH^2_{\bfal, C}\,.
\end{equation}
Note that both the spaces are topologized by the norm in $\scrH^2_\bfal$.

\begin{proposition}\label{prop:lax-milgram}
Suppose $f\in L^2(\Om')$, and $\bfal$ satisfies Assumption \ref{as:alpha}. There exists a unique (weak) solution $u \in \bfH$ of the  nonlocal PDE \eqref{hinged}.
\end{proposition}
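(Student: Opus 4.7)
The plan is to cast the problem in the Lax-Milgram framework on the Hilbert space $\bfH$, viewed as a closed subspace of $\scrH^2_\bfal(\Om)$. Testing $\mathcal{B}_\bfal[u] = f$ against $v \in \bfH$ and formally applying the nonlocal integration by parts identity \eqref{nonlocal-by-parts-lap} with $w = \cL_\bfal[u]$ (legitimate in the weak sense per Remark \ref{rem:by-parts-weak-strong}) motivates the weak formulation: find $u \in \bfH$ with
\begin{equation*}
a(u,v) \dfn \int_\Om \cL_\bfal[u]\,\cL_\bfal[v]\,d\bfx = \int_{\Om'} f v\, d\bfx \eqcolon L(v) \quad \forall v \in \bfH,
\end{equation*}
where the restriction of the source integral to $\Om'$ uses that $v \equiv 0$ on $\Om \setminus \Om'$ because $\bfH \subset \scrH^1_{\bfal,0}(\Om')$. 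I would then verify the four hypotheses of the Lax-Milgram theorem in turn: that $\bfH$ is Hilbert, that $a$ is bounded and coercive on $\bfH \times \bfH$, and that $L$ is a bounded linear functional on $\bfH$.

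Closedness of $\bfH$ in $\scrH^2_\bfal(\Om)$ is the first routine check. For the hinged space this holds because $\scrH^1_{\bfal,0}(\Om')$ is closed by definition and because the constraint $\cL_\bfal[u] = 0$ on $\Om' \setminus \Om''$ defines the kernel of the bounded operator $u \mapsto \chi_{\Om'\setminus \Om''}\cL_\bfal[u]$ from $\scrH^2_\bfal(\Om)$ into $L^2(\Om)$. The clamped case is handled similarly, or by using the derivation preceding \eqref{H-clamped-delta} to reinterpret $\cN_\bfal[\cG_\bfal[u]] = 0$ as an additional vanishing condition on a collar, a property evidently preserved under $L^2$ limits. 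Boundedness of $a$ follows by Cauchy-Schwarz: $|a(u,v)| \leq \|\cL_\bfal[u]\|_{L^2(\Om)}\|\cL_\bfal[v]\|_{L^2(\Om)} \leq \|u\|_{\scrH^2_\bfal}\|v\|_{\scrH^2_\bfal}$, and likewise $|L(v)| \leq \|f\|_{L^2(\Om')}\|v\|_{L^2(\Om)}\leq \|f\|_{L^2(\Om')}\|v\|_{\scrH^2_\bfal}$.

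The heart of the argument is coercivity, which I would establish by chaining two applications of the nonlocal Poincar\'e inequality (Corollary \ref{coro:poincare}) with a single integration by parts. Since every $u \in \bfH$ vanishes on $\Om \setminus \Om'$ with $\Om' \subset\subset \Om$, and since $|\bfal(\bfx,\bfy)|^2 = \rho_\del(|\bfx-\bfy|)/|\bfx-\bfy|^2$ makes the Poincar\'e integrand coincide with $|\cG_\bfal[u]|^2$, Corollary \ref{coro:poincare} applied with $p=2$ yields a constant $C_P > 0$ (depending on $\Om, \Om'$, but not on $u$) with
\begin{equation*}
\|u\|_{L^2(\Om)}^2 \leq C_P \|\cG_\bfal[u]\|_{L^2(\Om\times\Om)}^2.
\end{equation*}
Applying \eqref{nonlocal-by-parts-lap} with both functions equal to $u$, followed by Cauchy-Schwarz and the preceding bound, then gives
\begin{equation*}
\|\cG_\bfal[u]\|_{L^2(\Om\times\Om)}^2 = -\int_\Om \cL_\bfal[u]\,u\,d\bfx \leq \|\cL_\bfal[u]\|_{L^2(\Om)}\|u\|_{L^2(\Om)} \leq \sqrt{C_P}\,\|\cL_\bfal[u]\|_{L^2(\Om)}\|\cG_\bfal[u]\|_{L^2(\Om\times\Om)},
\end{equation*}
so $\|\cG_\bfal[u]\|_{L^2(\Om\times\Om)} \leq \sqrt{C_P}\|\cL_\bfal[u]\|_{L^2(\Om)}$ and combining produces $\|u\|_{\scrH^2_\bfal}^2 \leq (1 + C_P + C_P^2)\, a(u,u)$.

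The main obstacle is confirming that the nonlocal Poincar\'e inequality is applicable to $\bfH$, which rests on the zero-collar inclusion $\bfH \subset \scrH^1_{\bfal,0}(\Om')$ valid for both the hinged and clamped definitions; once this is in hand, Lax-Milgram immediately delivers a unique weak solution $u \in \bfH$ to $a(u,v) = L(v)$.
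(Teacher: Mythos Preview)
Your proposal is correct and follows essentially the same route as the paper: both derive the weak form $a(u,v)=\int_\Om \cL_\bfal[u]\cL_\bfal[v]\,d\bfx$, check boundedness directly, and obtain coercivity by combining the nonlocal Poincar\'e inequality on the zero-collar space (Corollary~\ref{coro:poincare}) with the integration-by-parts identity \eqref{nonlocal-by-parts-lap} to bound $\|u\|_{L^2}$ and $\|\cG_\bfal[u]\|_{L^2}$ by $\|\cL_\bfal[u]\|_{L^2}$. Your added verification that $\bfH$ is closed in $\scrH^2_\bfal(\Om)$ and that $L$ is bounded are details the paper leaves implicit, but the argument is otherwise the same.
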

\begin{proof}
We prove this result by using the Lax-Milgram Lemma. For $v \in\bfH$ the associated weak formulation is
\begin{equation*}
\int_{\Omega} \mathcal{B}_{\bfal} [u] v \; d\bfx = \int_{\Omega} fv \; d\bfx .
\end{equation*}

Using the fact that $v = 0$ on $\Om\setminus \Om'$  (regardless of the definition of $\bfH$) and through repeated application of Proposition \ref{parts} we obtain
\[
a[u,v] = \intOm f v\,d\bfx,
\]
where
\begin{equation*}
a[u,v] = \int_{\Omega} \cL_{\bfal} [u] \cL_{\bfal} [v] d\bfx .
\end{equation*}
This bilinear form is continuous since
\begin{equation*}
|a[u,v]|  \leq \| \cL_{\bfal} [u] \|_{L^2(\Omega)} \| \cL_{\bfal} [v] \|_{L^2(\Omega)} \leq \|u\|_{\mathscr{H}^2(\Omega)} \| v\|_{\mathscr{H}^2(\Omega)} .
\end{equation*}
It remains to show coercivity.  Since all functions in $\bfH$ vanish on a fixed collar $\Om\setminus \Om'$, then by Corollary \ref{coro:poincare} there is $C>0$ such that
\begin{equation}\label{LMinq1}
\| u\|_{{L^2}(\Omega)}^2\leq C \| \cG_{\bfal} [u] \|^2_{L^2(\Omega\times \Om)}\,.
\end{equation}
Integration by parts (Proposition \ref{parts}) and H\"{o}lder's inequality yields
\begin{equation}\label{LMinq2}
\| \cG_{\bfal} [u] \|_{L^2(\Omega\times \Om)}^2
 = -\intOm u  \cL_{\bfal} [u]  d\bfx 
\leq \| \cL_{\bfal}[u] \|_{L^2(\Omega)} \|u\|_{L^2(\Omega)}\,.
\end{equation}
By combining \eqref{LMinq1} and \eqref{LMinq2} we conclude that  $\|u\|_{\ltom} \leq C_1 \| \cL[u] \|_{L^2(\Omega)}$ and $\| \cG_{\bfal} [u] \|_{L^2(\Omega\times \Om)} \leq C \| \cL[u] \|_{L^2(\Omega)}$. Thus, there is $c_2>0$ such that for all $u\in \bfH$
\begin{equation*}
a(u,u) = \| \cL[u] \|_{L^2(\Omega)}^2 \geq c_2 \|u\|_{\mathscr{H}^2(\Omega)}^2
\end{equation*} 

Since $\bfH$ is a Hilbert space, by the Lax-Milgram theorem there exists a unique element $u\in \bfH$ satisfying
\begin{equation*}
B[u] = f.
\end{equation*}
\end{proof}


\section{Convergence results}

In this section we will show that the solution to a nonlocal system approximates the corresponding solution of the classical elliptic boundary value problem as the horizon approaches $0$.  Such a study  has been previously carried out for the Navier system in \cite{DuMengesha}. 

Before proceeding to the biharmonic operator we collect several results for the classical and nonlocal Poisson problems:
\begin{equation}\label{ClassicLap}
\left\{
\begin{array}{ll}
\Delta v = f, & \bfx \in \Omega \\
v = 0, & \bfx \in \partial \Omega
\end{array}
\right.
\end{equation}
and its nonlocal analogue:
\begin{equation}\label{NonlocLap}
\left\{
\begin{array}{ll}
\cL_{\bfal_{\delta_n}} [v_n] = f, & \bfx \in \Omega_{\tl{\delta}_n} \\
v_n = 0, & \bfx \in \Omega \setminus \Omega_{\tl{\delta}_n},\quad \tl{\del}_n\geq\del_n\,.
\end{array}
\right.
\end{equation}
Note that for convenience we allow the zero Dirichlet data to be prescribed over a possibly thicker collar than the horizon of the Laplace operator. Recall that we are using the scaled (by $\sig = 2d$) version of the Laplacian \eqref{scaledop}.  
The following result is an adjusted version of \cite[Thm 5.4]{DuMengesha}. 

\begin{theorem}[{cf. \cite[Thm 5.4]{DuMengesha}}]\label{L2} Let $\Om\subset \bbR^d$, $d \geq 2$, be an open bounded set of class $C^1$. Suppose sequence of positive scalars $(\del_n)$  converges to $0$ as $n\to \infty$.
For any $f \in L^2(\Omega)$ the system \eqref{NonlocLap} has a unique solution in $\scrH^1_{\bfal_{\del_n},0}(\Om_{\tl{\del}_n})$, and the sequence of solutions $\left\{ v_n \right\} \subset \mathscr{H}_{\bfal_{\del_n}, 0}^1(\Om_{\tl{\del}_n})$ to \eqref{NonlocLap} converges strongly in $L^2(\Om)$ to $v \in W_0^{1,2}(\Omega)$ where $v$ is the weak (variational) solution of the Laplace equation \eqref{ClassicLap}. In addition, there is $C>0$ so that for all $n $
\begin{equation}\label{bounds-grad-laplace}
\begin{split}
\|v_n \|_{L^2(\Omega)} +  \|\cG_{\bfal_{\delta_n}} [v_n] \|_{L^2(\Omega\times \Om)}
\leq C \|f\|_{L^2(\Omega_{\tl{\del}_n})} \,.
\end{split} 
\end{equation}
Moreover, if $\Om$ is, in addition, either of class $C^2$ or  convex, then the classical elliptic theory (see, for instance, \cite[Thm. IX.25, p. 181]{b:brezis:83} and \cite[pp. 139, 147]{b:grisvard:85}) gives $v \in W^{2,2}(\Om)\cap W^{1,2}_0(\Om)$.
\end{theorem}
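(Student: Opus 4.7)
The plan is to combine Lax--Milgram for existence/uniqueness, uniform a priori energy estimates, and the nonlocal compactness of Corollary \ref{coro:UniformGrad} to extract a convergent subsequence, and then to identify the limit as the classical weak solution via a duality argument built on Proposition \ref{parts} and Theorem \ref{thm:convlap}.

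First, for each fixed $n$, existence and uniqueness of $v_n$ follow from Lax--Milgram applied to the bilinear form
\[
a_n(u,w) \dfn \intOm\intOm \cG_{\bfal_{\del_n}}[u]\cdot \cG_{\bfal_{\del_n}}[w]\,d\bfy d\bfx
\]
on the Hilbert space $\scrH^1_{\bfal_{\del_n},0}(\Om_{\tl\del_n})$. Continuity is immediate from Cauchy--Schwarz. Coercivity follows from the nonlocal Poincar\'e inequality: since $\supp v_n\subset \cl\Om_{\tl\del_n}\subset \cl\Om_{\del_n}$ (using $\tl\del_n\geq \del_n$), Corollary \ref{coro:poincare:0} gives $\|v_n\|_{\ltom}\leq C\|\cG_{\bfal_{\del_n}}[v_n]\|_{L^2(\Om\times\Om)}$ with a constant $C$ independent of $n$. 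The weak form of \eqref{NonlocLap}, obtained via Proposition \ref{parts} and the definition of the weak nonlocal Laplacian, reads $-a_n(v_n,\phi)=\intOm f\phi\,d\bfx$ for every admissible test function $\phi$.

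Second, to obtain the a priori estimate \eqref{bounds-grad-laplace}, test this weak formulation with $\phi = v_n$ to get $\|\cG_{\bfal_{\del_n}}[v_n]\|^2_{L^2(\Om\times\Om)}\leq \|f\|_{L^2(\Om)}\|v_n\|_{\ltom}$, then combine with the uniform Poincar\'e bound from Step 1 to absorb $\|v_n\|_{\ltom}$ on the right. This yields the desired bound with a constant independent of $n$. The uniform control of $\|\cG_{\bfal_{\del_n}}[v_n]\|_{L^2(\Om\times\Om)}$ is precisely a uniform bound on the quotient integral in \eqref{bound-on-quotient} with $p=2$; combined with $\supp v_n\subset \cl\Om_{\del_n}$ and the $C^1$ regularity of $\Om$, Corollary \ref{coro:UniformGrad} then provides a subsequence (still denoted $v_n$) such that $v_n\to v$ strongly in $L^2(\Om)$ and $v\in W^{1,2}_0(\Om)$.

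The main obstacle is identifying the limit $v$ as the weak solution of \eqref{ClassicLap}: one cannot directly pass $\cG_{\bfal_{\del_n}}[\phi]$ to $\nabla\phi$ inside the bilinear form. The maneuver is to use nonlocal integration by parts in reverse to shift the nonlocal operator onto the smooth test function, where its convergence is already controlled. For $\phi\in C^\infty_c(\Om)$ and $n$ large enough that $\supp\phi\subset \Om_{\tl\del_n}$, Proposition \ref{parts} yields
\[
\intOm v_n\,\cL_{\bfal_{\del_n}}[\phi]\,d\bfx \,=\, -a_n(v_n,\phi) \,=\, \intOm f\phi\,d\bfx.
\]
Because $\phi$ has compact support in $\Om$, the compactly-supported version \eqref{conv:lap:compact} of Theorem \ref{thm:convlap} gives $\cL_{\bfal_{\del_n}}[\phi]\to \Del\phi$ strongly in $\ltom$. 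Coupled with $v_n\to v$ in $\ltom$, passing to the limit produces $\intOm v\,\Del\phi\,d\bfx = \intOm f\phi\,d\bfx$ for every $\phi\in C^\infty_c(\Om)$. Together with $v\in W^{1,2}_0(\Om)$, this is precisely the weak formulation of \eqref{ClassicLap}, and uniqueness of that weak solution forces the full sequence $v_n$ (not merely a subsequence) to converge to $v$.

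Finally, the regularity conclusion $v\in W^{2,2}(\Om)\cap W^{1,2}_0(\Om)$ under the additional $C^2$ or convexity hypothesis is a direct citation of the classical elliptic regularity results already referenced in the statement, applied to the Laplace equation now that $v$ has been identified.
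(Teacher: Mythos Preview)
Your proposal is correct and follows essentially the same route as the paper: Lax--Milgram for well-posedness, the uniform a priori bound via Corollary \ref{coro:poincare:0} and nonlocal integration by parts, compactness from Corollary \ref{coro:UniformGrad}, and identification of the limit by shifting $\cL_{\bfal_{\del_n}}$ onto a compactly supported test function and invoking Theorem \ref{thm:convlap}. The only minor refinement is that, since $v_n$ vanishes on $\Om\setminus\Om_{\tl\del_n}$, the pairing $\intOm f v_n$ equals $\int_{\Om_{\tl\del_n}} f v_n$, which gives the sharper constant $\|f\|_{L^2(\Om_{\tl\del_n})}$ stated in \eqref{bounds-grad-laplace}.
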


\begin{proof}
The well-posedness result for \eqref{NonlocLap}
follows easily via the Lax-Milgram argument as in Proposition \ref{prop:lax-milgram},  using the bilinear form $(u,v)\mapsto \intOm \intOm \cG_\bfal[u]\cdot\cG_\bfal[v] d\bfx d\bfy$ and the nonlocal Poincar\'e estimate (see also \cite{Radu2012} for a different argument).

To invoke Theorem \ref{thm:UniformGrad} let us prove  uniform bounds on $\| \cG_{\bfal_{\delta_n}}[v_n] \|_{L^2(\Om\times \Om)}$ and on $\|v_n\|_{\ltom}$, independent of $n$. Apply Corollary \ref{coro:poincare:0}, nonlocal integration by parts, and H\"{o}lder's inequality:
\begin{equation}\label{a-priori-poisson}
\begin{split}
\|v_n \|_{L^2(\Omega)}^2 \leq& C \|\cG_{\bfal_{\delta_n}} [v_n] \|_{L^2(\Omega\times \Om)}^2 \\
=& - C  \langle \cL_{\bfal_{\delta_n}} [v_n], v_n \rangle_{L^2(\Omega)}  \\
=& -C \langle \cL_{\bfal_{\delta_n}} [v_n], v_n \rangle_{L^2(\Omega_{\tl{\del}_n})}  \\
=& - C \langle f, v_n \rangle_{L^2(\Omega_{\tl{\del}_n})} \\
\leq & C \|f\|_{L^2(\Omega_{\tl{\del}_n})} \|v_n \|_{L^2(\Omega)}\,,
\end{split} 
\end{equation}
where $C$ is independent of $n$. 
Dividing by $\|v_n\|_{\ltom}$ yields a bounds on the latter in terms of $\|f\|_{L^2(\Om_{\tl{\del}_n})}$. That in turn gives a bound on the nonlocal gradient verifying \eqref{bounds-grad-laplace} up to a minor adjustment of the constant.

We conclude by Corollary \ref{coro:UniformGrad} that $\left\{v_n \right\}$ is relatively compact in $L^2(\Omega)$. We will show that every cluster point of $\left\{v_n \right\}$ solves the classical Poisson equation \eqref{ClassicLap} which has a unique solution in $W_0^{1,2}(\Omega)$. Let $(v_n)$ be a convergent subsequence. Consider a test function $\phi \in C_c^\infty(\Omega)$. We may assume that $\tl{\del}_n$ is small enough to ensure that $\supp \phi \subset \Om_{\tl{\del}_n}$. Then via Proposition \ref{parts}
\begin{equation*}
\intOm v_n \cL_{\bfal_{\delta_n}} [\phi] d \bfx = \intOm  \cL_{\bfal_{\delta_n}}   [v_n] \phi d\bfx
=
 \int_{\Om_{\tl{\del}_n}}  \cL_{\bfal_{\delta_n}}   [v_n] \phi d\bfx
 =
 \intOm f \phi d \bfx \,.
\end{equation*}
Since $v_n \to v$ strongly in $\ltom$, and by  the result of Theorem \ref{thm:convlap} applied to the compactly supported function $\phi$  the limit as $n\to \infty$ gives
\begin{equation*}
 \intOm v \Delta \phi \, d\bfx = \intOm f \phi d\bfx
\end{equation*}
as $\delta_n \to 0$, verifying that $v$ is the  distributional solution of the  Poisson problem. Since $v\in W^{1,2}_0(\Omega)$, then via integration by parts we conclude that $v$ is the weak variational solution to the Poisson problem.
\end{proof}

Now  consider a  family of equations
\begin{equation}\label{nonloclap2}
\left\{
\begin{array}{ll}
\cL_{\bfal_{\delta_n}} [v_n] = f_n, & \bfx \in \Omega_{\tl{\delta}_n} \\
v_n = 0, & \bfx \in \Omega \setminus \Omega_{\tl{\delta}_n},\quad \tl{\del}_n \geq \del_n
\end{array}
\right.
\end{equation}
where $v_n \in \mathscr{H}_{\bfal_{\del_n},0}^1(\Om_{\tl{\del}_n})$.

\begin{coro}\label{coro:L2seq} 
The result of  Theorem \ref{L2} holds if in each nonlocal problem \eqref{NonlocLap} and in 
\eqref{bounds-grad-laplace}
we replace $f$ by  $f_n\in \ltom $ assuming that   $f_n\to f$ in $\ltom$.
\end{coro}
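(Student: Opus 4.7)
The plan is to repeat the argument of Theorem~\ref{L2} nearly verbatim, since the only substantive change is that the fixed forcing $f$ is replaced by a convergent sequence $(f_n)$ in $\ltom$. Existence and uniqueness of each $v_n\in \scrH^1_{\bfal_{\del_n},0}(\Om_{\tl{\del}_n})$ follow from the same Lax--Milgram argument used in Proposition~\ref{prop:lax-milgram}, applied to the bilinear form $(u,w)\mapsto \intOm\intOm \cG_{\bfal_{\del_n}}[u]\cdot\cG_{\bfal_{\del_n}}[w]\, d\bfy d\bfx$ on $\scrH^1_{\bfal_{\del_n},0}(\Om_{\tl{\del}_n})$, since each $f_n\in \ltom$ defines a bounded linear functional and the nonlocal Poincar\'e inequality of Corollary~\ref{coro:poincare:0} supplies coercivity.

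Next I would derive the uniform bounds by running the chain \eqref{a-priori-poisson} with $f$ replaced by $f_n$, obtaining
\[
\|v_n\|_{\ltom} + \|\cG_{\bfal_{\del_n}}[v_n]\|_{L^2(\Om\times\Om)} \leq C\|f_n\|_{L^2(\Om_{\tl{\del}_n})},
\]
which is precisely \eqref{bounds-grad-laplace} with $f_n$ on the right. Because $(f_n)$ converges in $\ltom$, it is bounded, so these bounds are uniform in $n$. Since $\tl{\del}_n \geq \del_n$ gives $\supp v_n\subset \cl{\Om}_{\tl{\del}_n}\subset \cl{\Om}_{\del_n}$, Corollary~\ref{coro:UniformGrad} yields a subsequence (re-indexed by $n$) such that $v_n\to v$ strongly in $\ltom$ with $v\in W^{1,2}_0(\Om)$.

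To identify $v$ as the weak solution of \eqref{ClassicLap}, pick $\phi\in C_c^\infty(\Om)$. For $n$ large enough that $\supp \phi\subset \Om_{\tl{\del}_n}$, nonlocal integration by parts (Proposition~\ref{parts}) gives
\[
\intOm v_n \, \cL_{\bfal_{\del_n}}[\phi]\, d\bfx = \intOm \cL_{\bfal_{\del_n}}[v_n]\, \phi\, d\bfx = \intOm f_n \phi\, d\bfx.
\]
Now let $n\to\infty$. On the left, $v_n\to v$ in $\ltom$, while Theorem~\ref{thm:convlap} (the compactly-supported case \eqref{conv:lap:compact}) gives $\cL_{\bfal_{\del_n}}[\phi]\to \Del\phi$ strongly in $\ltom$; on the right, $\intOm f_n\phi \to \intOm f\phi$ by H\"{o}lder's inequality since $f_n\to f$ in $\ltom$ and $\phi\in\ltom$. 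We obtain
\[
\intOm v\,\Del\phi\, d\bfx = \intOm f\phi\, d\bfx \txtforall \phi\in C_c^\infty(\Om),
\]
so $v$ is the distributional, and (being in $W^{1,2}_0(\Om)$) the unique weak variational solution of \eqref{ClassicLap}. Uniqueness of the limit then forces the entire sequence $(v_n)$ to converge to $v$ in $\ltom$, not just a subsequence.

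There is no new obstacle beyond Theorem~\ref{L2}: the only additional ingredient is passing to the limit in $\intOm f_n\phi\,d\bfx$, which is immediate from the $\ltom$-convergence $f_n\to f$ together with boundedness of the test function $\phi$.
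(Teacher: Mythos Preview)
Your proof is correct and follows essentially the same approach as the paper: derive uniform bounds from the a~priori estimate \eqref{a-priori-poisson} using boundedness of $(f_n)$, extract a convergent subsequence via Corollary~\ref{coro:UniformGrad}, and identify the limit by testing against $\phi\in C_c^\infty(\Om)$, passing to the limit on the right-hand side via $f_n\to f$ and on the left via Theorem~\ref{thm:convlap}. Your write-up is slightly more detailed (explicitly noting well-posedness and the upgrade from subsequential to full-sequence convergence), but the substance is identical.
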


\begin{proof}
Since $f_n$ are uniformly bounded in $\ltom$ then as in \eqref{a-priori-poisson} in the proof of Theorem  \ref{L2} we conclude that $\left\{v_n \right\}$ is relatively compact in $L^2(\Om)$. Let $v$ be a cluster point of this sequence. Consider a test function $\phi \in C_c^\infty(\Omega)$. We may assume that $\supp \phi \subset \Om_{\tl{\del}_n}$ for all $n$. By Proposition \ref{parts} we obtain
\begin{equation*}
\intOm v_n \cL_{\bfal_{\delta_n}} [\phi] d \bfx =
\intOm  \cL_{\bfal_{\delta_n}}   [v_n] \phi d \bfx
=
\int_{\Om_{\tl{\del}_n}}  \cL_{\bfal_{\delta_n}}   [v_n] \phi d \bfx
 =
 \intOm f_n \phi d \bfx \,.
\end{equation*}
Since  $f_n \to f$ and $v_n \to v$ in $L^2$, via Theorem \ref{thm:convlap} applied to $\phi$ conclude
\begin{equation*}
 \intOm v \Delta \phi d\bfx = \intOm f \phi  d\bfx
\end{equation*}
From the $W^{1,2}_0(\Om)$ regularity it follows that $v$ is a weak solution of the Poisson problem \eqref{ClassicLap}.
\end{proof}

\subsection{Convergence to classical hinged problem}
We turn to the elliptic problem for the nonlocal biharmonic and the  analog of hinged boundary conditions. Note that second-order boundary condition will be applied to the extended collar. In particular we consider this problem on the space $\scrH^2_{\bfal_{\del_n}}(\Om_{\del}, \Om_{2\del})$ according to the definition \eqref{def:H-hinged}.

\begin{theorem}\label{thm:hinged}
Let $\Om\subset \Rn$, $d \geq 2$, be a bounded domain either of class $C^2$ or convex of class $C^1$.    Suppose sequence of positive scalars $(\del_n)$  converges to $0$ as $n\to \infty$.
The solutions of the nonlocal hinged problems
\begin{equation}\label{nonlocbihinged}
\left\{
\begin{array}{ll}
\mathcal{B}_{\bfal_{\delta_n}} [u_n] = f, & \bfx \in \Omega_{2\delta_n} \\
u_n = 0,  & \bfx\in \Om\setminus \Om_{\del_n} \\
 \cL_{\bfal_{\delta_n}} [u_n] = 0, & \bfx \in \Omega_{\del_n} \setminus \Omega_{2\delta_n}
\end{array}
\right.
\end{equation}
converge  in $L^2(\Om)$ to the weak (variational) solution  $u\in W^{2,2}(\Om)\cap W^{1,2}_0(\Om)$ of 
\begin{equation}\label{locbihinged}
\left\{
\begin{array}{ll}
\Delta^2 u = f, & \bfx \in \Omega \\
u = \Delta u = 0, & \bfx \in \partial \Omega
\end{array}
\right.
\end{equation}
as  $n \to \infty$. Furthermore, if $\Om$ is of class $C^4$ then $u$ is also in $ W^{4,2}(\Om)$.
\end{theorem}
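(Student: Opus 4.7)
The plan is to factor the nonlocal fourth-order hinged problem into two cascaded nonlocal Poisson problems, mirroring the classical decomposition in which $v=\Delta u$ satisfies $\Delta v = f$ with $v|_{\partial \Omega}=0$, and then $u$ is recovered via $\Delta u = v$ with $u|_{\partial \Omega}=0$. The nonlocal hinged BC in \eqref{nonlocbihinged} is structured precisely for this factorization: $u_n=0$ on the outer collar $\Omega\setminus\Omega_{\delta_n}$ encodes $u|_{\partial\Omega}=0$, while $\cL_{\bfal_{\delta_n}}[u_n]=0$ on the inner collar $\Omega_{\delta_n}\setminus \Omega_{2\delta_n}$ plays the role of $\Delta u|_{\partial\Omega}=0$. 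Accordingly, I construct two auxiliary Poisson solutions via Lax--Milgram (as in Proposition \ref{prop:lax-milgram}): let $\bar v_n \in \scrH^1_{\bfal_{\delta_n},0}(\Omega_{2\delta_n})$ satisfy
\begin{equation*}
\cL_{\bfal_{\delta_n}}[\bar v_n] = f \txtin \Omega_{2\delta_n}, \qquad \bar v_n = 0 \txtin \Omega\setminus \Omega_{2\delta_n},
\end{equation*}
and let $w_n \in \scrH^1_{\bfal_{\delta_n},0}(\Omega_{\delta_n})$ satisfy
\begin{equation*}
\cL_{\bfal_{\delta_n}}[w_n] = \bar v_n \txtin \Omega_{\delta_n}, \qquad w_n = 0 \txtin \Omega\setminus \Omega_{\delta_n}.
\end{equation*}
A direct verification then shows $w_n \in \scrH^2_{\bfal_{\delta_n},H}(\Omega_{\delta_n},\Omega_{2\delta_n})$ and satisfies the nonlocal biharmonic hinged weak formulation: the outer collar condition is immediate; $\cL_{\bfal_{\delta_n}}[w_n]=\bar v_n$ vanishes on the inner collar since $\bar v_n=0$ there; and iterating yields $\cB_{\bfal_{\delta_n}}[w_n] = \cL_{\bfal_{\delta_n}}[\bar v_n] = f$ on $\Omega_{2\delta_n}$. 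By uniqueness in Proposition \ref{prop:lax-milgram}, $w_n = u_n$.

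Passage to the limit then uses the Poisson-level convergence results already established. Theorem \ref{L2} applied with $\tilde\delta_n = 2\delta_n$ yields $\bar v_n \to v^*$ strongly in $L^2(\Omega)$, where $v^*$ is the variational solution of $\Delta v^* = f$, $v^*|_{\partial\Omega}=0$; the regularity hypothesis on $\Omega$ (of class $C^2$ or convex $C^1$) lifts $v^*$ to $W^{2,2}(\Omega)\cap W^{1,2}_0(\Omega)$ via classical elliptic theory, as already invoked in Theorem \ref{L2}. Since the forcing sequence $\bar v_n$ converges in $L^2$, Corollary \ref{coro:L2seq} applied to the second Poisson system with $f_n=\bar v_n$ gives $w_n \to u$ strongly in $L^2(\Omega)$, where $u \in W^{1,2}_0(\Omega)$ weakly solves $\Delta u = v^*$. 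Hence $\Delta^2 u = \Delta v^* = f$ in the weak sense, and $\Delta u = v^* \in W^{1,2}_0(\Omega)$ has vanishing trace on $\partial \Omega$, recovering the classical hinged condition. One more application of the elliptic regularity theory promotes $u$ to $W^{2,2}(\Omega)\cap W^{1,2}_0(\Omega)$, and a second iteration under the $C^4$ hypothesis (now with $v^*\in W^{2,2}$ playing the role of the data) lifts $u$ to $W^{4,2}(\Omega)$.

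The main technical obstacle is verifying that the Lax--Milgram solution $w_n$ of the second Poisson problem genuinely lies in $\scrH^2_{\bfal_{\delta_n}}$, i.e., that its nonlocal Laplacian is in $L^2(\Omega)$ rather than merely on the interior subdomain $\Omega_{\delta_n}$. On $\Omega_{\delta_n}$ the identification $\cL_{\bfal_{\delta_n}}[w_n]=\bar v_n$ a.e.\ follows from the variational characterization tested against $\scrH^1_{\bfal_{\delta_n},0}(\Omega_{\delta_n})$. On the outer collar $\Omega\setminus\Omega_{\delta_n}$, where $w_n$ vanishes, the pointwise formula reduces to $\cL_{\bfal_{\delta_n}}[w_n](\bfx) = \sigma \int_\Omega w_n(\bfy)\,\mu_{\delta_n}(\bfx,\bfy)\,d\bfy$, whose $L^2$-integrability in $\bfx$ requires a Cauchy--Schwarz estimate combined with the a priori $L^2$ bound on $w_n$, with extra care in dimension $d=2$ where $\mu_{\delta_n}$ is only weakly singular. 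Once this is in place, the symmetry of the weak nonlocal Laplacian via Proposition \ref{parts} makes the verification of the biharmonic weak form routine.
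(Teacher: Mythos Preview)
Your factorization into two cascaded nonlocal Poisson problems is exactly the paper's approach. The paper, however, runs the identification in the opposite direction: starting from the given $u_n\in\scrH^2_{\bfal_{\delta_n},H}$, it \emph{defines} $v_n \dfn \chi_{\Omega_{\delta_n}}\cL_{\bfal_{\delta_n}}[u_n]$, observes that $v_n$ solves the first nonlocal Poisson problem (with collar $\Omega\setminus\Omega_{2\delta_n}$), and that $u_n$ itself solves the second with forcing $v_n$. Then Theorem~\ref{L2} and Corollary~\ref{coro:L2seq} are applied directly to $v_n$ and $u_n$. This sidesteps entirely the obstacle you flag, because $\cL_{\bfal_{\delta_n}}[u_n]\in L^2(\Omega)$ is already guaranteed by $u_n\in\scrH^2_{\bfal_{\delta_n}}$.

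Your resolution of that obstacle is where the gap lies. In $d=2$ the kernel $\mu_{\delta_n}(\bfx,\bfy)=\rho_{\delta_n}(|\bfx-\bfy|)/|\bfx-\bfy|^2$ is \emph{not} weakly singular---it fails to be in $L^1$ (and certainly not $L^2$) near the diagonal---so the Cauchy--Schwarz estimate you propose for $\int_\Omega w_n(\bfy)\mu_{\delta_n}(\bfx,\bfy)\,d\bfy$ does not yield a finite bound from $\|w_n\|_{L^2}$ alone. Knowing only $w_n\in\scrH^1_{\bfal_{\delta_n},0}(\Omega_{\delta_n})$, there is no evident mechanism forcing $\cL_{\bfal_{\delta_n}}[w_n]$ to lie in $L^2$ across the collar, and hence no direct route to $w_n\in\scrH^2_{\bfal_{\delta_n},H}$ and the biharmonic uniqueness you want to invoke. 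The simplest repair is to adopt the paper's direction: extract the cascade from $u_n$ rather than build it and identify.
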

\begin{remark}[Regularity of the domain]
Note that the $C^2$ regularity or convex $C^1$  regularity in the assumption of Theorem \ref{thm:hinged} is only needed to identify the limit of the nonlocal solutions as the classical weak solution of the hinged elliptic problem. 
The nonlocal problem itself is wellposed on non-smooth domains. Moreover, the $L^2$-limit of nonlocal solutions, exists if the boundary is merely $C^1$ (in fact, some relaxation is possible as indicated in Remark 
\ref{rem:regularity}).
\end{remark}


\begin{proof}
\textbf{Step 1.}
Let $\chi_{\del}$ be the characteristic function of $\Om_{\del}$. Set 
\[
v_n(\bfx) \dfn  \chi_{\del}(\bfx)\cL_{\bfal_{\delta_n}} [ u_n](\bfx)\,.
\]
Due to the support of $\mu_{\del_n}(\bfx,\cdot)$ the value of $\cB_{\bfal_{\del_n}}[u_n](\bfx) $  for $\bfx\in \Om_{2\del_n}$ only depends  on  $\cL_{\bfal_{\del_n}}[u_n]$  in $\Om_{\del_n}$. Since in $\Om_{\del_n}$ the functions $v_n$ and $\cL_{\bfal_{\del_n}}[u_n]$ coincide (by definition) then
\[
 \cL_{\bfal_{\del_n}}[v_n] (\bfx) = \cB_{\bfal_{\del_n}} [u_n](\bfx) = f(\bfx) \txtfor \bfx\in \Om_{\del_{2n}}\,.
\]
When $\bfx\in \Om\setminus \Om_{\del_n}$ then $\bfv_n=0$, and if $\bfx \in \Om_{\del_n}\setminus \Om_{2\del_n} $, then $v_n(\bfx) = \cL_{\bfal_{\del_n}}[u_n](\bfx) = 0$ by the second boundary condition in \eqref{nonlocbihinged}. Hence $v_n$ satisfies
\begin{equation*}
\left\{
\begin{array}{ll}
\cL_{\bfal_{\delta_n}} [v_n] = f, & \bfx \in \Omega_{2\delta_n} \\
v_n = 0, & \bfx \in \Omega \setminus \Omega_{2\delta_n}\,.
\end{array}
\right.
\end{equation*}
By Theorem \ref{L2} (with $\tl{\del}_n=2\del_n \geq \del_n$) we know that $v_n \to v$ in $\ltom $ with  $v \in W_0^{1,2}(\Omega) \cap W^{2,2}(\Om)$, where $\Delta v = f$. 

\smallskip

\textbf{Step  2.} By definition of $v_n$, we also know that $u_n$ solves
\[
\left\{
\begin{array}{ll}
\cL_{\bfal_{\delta_n}} [u_n] = v_n, & \bfx \in \Omega_{\delta_n} \\
u_n = 0, & \bfx \in \Omega \setminus \Omega_{\delta_n}\,.
\end{array}
\right.
\]
According to Corollary \ref{coro:L2seq} (now $\tl{\del}_n=\del_n$),  $u_n \to u$ in $L^2(\Omega)$, where $u \in W^{1,2}_0(\Om)$ and $u$ is the weak solution to $\Delta u = v$.  Then it follows that
\[
\Delta^2 u = \Delta v = f
\]
 in the sense of distributions. Since $u \in W^{1,2}_0(\Omega)$ and $\Del u = v \in \ltom$, then  either $C^2$ regularity of the domain or its convexity ensures  $u \in W^{2,2}(\Om)\cap W^{1,2}_0(\Om)$. This fact, along with  $\Del u\in W^{1,2}_0(\Om)$ via integration by parts of the distributional identity $\Del^2 u = f$ verifies that $u$ is, in fact, a weak solution of \eqref{locbihinged}.

Moreover, from the elliptic theory  
follows that if the boundary is of class $C^4$ then   $u\in W^{4,2}(\Om)$   \cite[Sec 3A, pp. 282--284]{b:las-tri:v1} (the latter, in fact, deals with smooth boundary, but it suffices to consider the order of boundary regularity  which matches that of the operator, namely, $C^4$).
\end{proof}

\subsection{Convergence to classical solution in clamped problem}
The next result deals with a nonlocal version of the clamped biharmonic problem. This time the boundary conditions are up to the first order, and are both prescribed on the ``original" collar $\Om\setminus \Om_{\del_n}$. As indicated earlier in the analysis of nonlocal clamped boundary conditions, this condition in fact affects the extended collar too 
 \eqref{def:H-hinged} essentially forcing the solution  to be zero on the extended collar.

\begin{remark}[Distinction between nonlocal hinged and clamped]
Note that $u=0$ on $\Om\setminus \Om_{2\del}$ does not provide  $\cL_{\bfal_{\del_n}}[u_n] =0$ on $\Om_\del\setminus \Om_{2\del}$ as imposed in the hinged case, since the nonlocal Laplacian on $\Om_\del\setminus \Om_{2\del}$ subdomain also draws information from inside $\Om_{2\del}$. 

Had all boundary conditions in \eqref{nonlocbihinged} were instead imposed on the same collar $\Om\setminus \Om_\del$, then those conditions would have been a consequence of the nonlcoal clamped data on $\Om_\del$.
\end{remark}

\begin{equation}\label{nonlocbiclamped}
\left\{
\begin{array}{ll}
\mathcal{B}_{\bfal_{\delta_n}} [u_n] = f, & \bfx \in \Omega_{2\delta} \\
u_n = \mathcal{N}_{\bfal_{\delta_n}} [ \cG_{\bfal_{\delta_n}} [u_n]] = 0, & \bfx \in \Omega \setminus \Omega_{\delta_n}\,, \quad (\text{equiv. }  u_n = 0,\; \bfx \in\Om\setminus\Om_{2\del})
\end{array}
\right.
\end{equation}
and its classical analog:
\begin{equation}\label{locbiclamped}
\left\{
\begin{array}{ll}
\Delta^2 u = f, & \bfx \in \Omega \\
\ds u = \frac{\partial u}{\partial \nu} = 0, & \bfx \in \partial \Omega\,.
\end{array}
\right.
\end{equation}

\begin{theorem}\label{clampedthm} Let $\Om\subset \Rn$, $d \geq 2$, be bounded open of class $C^2$ or convex of class $C^1$.
 Suppose sequence of positive scalars $(\del_n)$  converges to $0$ as $n\to \infty$.
For all $f \in L^2(\Omega)$, the sequence of solutions $\left\{ u_n \right\} \subset \scrH^2_{\bfal_{\del_n}, C}(\Om_{\del_n})$ to \eqref{nonlocbiclamped} converges strongly in $L^2(\Omega)$ to $v \in W_0^{2,2}(\Omega)$, which is a weak solution of \eqref{locbiclamped}, as $n\to \infty$.  Moreover if  $\Om$ is of class $C^4$,  then $u$ is the regular solution to this elliptic problem: $u\in W^{4,2}(\Om)$.
\end{theorem}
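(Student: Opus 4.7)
The plan is to mirror the two-step reduction from Theorem~\ref{thm:hinged} as far as possible and then handle separately the extra boundary information that distinguishes the clamped case. First I would derive a priori estimates by testing the weak formulation $\int_\Omega \cL_{\bfal_{\del_n}}[u_n]\,\cL_{\bfal_{\del_n}}[v]\,d\bfx = \int_\Omega f v \, d\bfx$ with $v=u_n$: combining this with Corollary~\ref{coro:poincare:0} (applicable since $\supp u_n \subset \cl\Omega_{2\del_n}\subset\cl\Omega_{\del_n}$) and nonlocal integration by parts (Proposition~\ref{parts}) to get $\|\cG_{\bfal_{\del_n}}[u_n]\|_{L^2(\Omega\times\Omega)}^2\le\|u_n\|_{L^2}\|\cL_{\bfal_{\del_n}}[u_n]\|_{L^2}$ yields the uniform bound
$\|u_n\|_{L^2}+\|\cG_{\bfal_{\del_n}}[u_n]\|_{L^2(\Omega\times\Omega)}+\|\cL_{\bfal_{\del_n}}[u_n]\|_{L^2}\le C\|f\|_{L^2}$. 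Corollary~\ref{coro:UniformGrad} then gives $u_n\to u$ strongly in $L^2(\Omega)$ along a subsequence with $u\in W^{1,2}_0(\Omega)$, while $v_n\dfn\cL_{\bfal_{\del_n}}[u_n]\wto v$ weakly in $L^2(\Omega)$. Testing with $\phi\in C^\infty_c(\Omega)$ (whose support is eventually inside $\Omega_{2\del_n}$) and using Theorem~\ref{thm:convlap} to pass $\cL_{\bfal_{\del_n}}[\phi]\to \Delta\phi$ in $L^2$, I would obtain $\int v\Delta\phi = \int f\phi$ from the weak formulation and $\int v\phi = \int u\Delta\phi$ from a single nonlocal integration by parts, giving $\Delta u = v$ and $\Delta v = f$, hence $\Delta^2 u = f$ distributionally.

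Having $u\in W^{1,2}_0(\Omega)$ with $\Delta u = v\in L^2(\Omega)$ and $\Omega$ either $C^2$ or convex $C^1$, classical elliptic regularity (as already cited in Theorem~\ref{L2}) upgrades $u$ to $W^{2,2}(\Omega)\cap W^{1,2}_0(\Omega)$. What remains—and this is the main obstacle—is to promote this to $u\in W^{2,2}_0(\Omega)$, i.e.\ to establish the clamped trace $\partial_\nu u = 0$ on $\partial\Omega$. The hinged-style splitting from Theorem~\ref{thm:hinged} does not directly provide this information because the clamped condition forces $u_n$ to vanish on the wider collar $\Omega\setminus\Omega_{2\del_n}$ rather than cutting off $\cL_{\bfal_{\del_n}}[u_n]$ on the intermediate shell.

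To exploit that wider collar, I would extend $u_n$ by zero to a bounded neighborhood $\Omega_{\mathrm{ext}}\supset\cl\Omega$ producing $\tilde u_n$. A short support argument using $\supp\mu_{\del_n}(\bfx,\cdot)\subset B_{\del_n}(\bfx)$ and $\tilde u_n = 0$ on $\Omega_{\mathrm{ext}}\setminus\Omega_{2\del_n}$ shows that $\cL_{\bfal_{\del_n}}[\tilde u_n](\bfx) = 0$ whenever $\dist(\bfx,\partial\Omega)\le \del_n$ (from either side of $\partial\Omega$), while $\cL_{\bfal_{\del_n}}[\tilde u_n] = \cL_{\bfal_{\del_n}}[u_n]$ on $\Omega_{\del_n}$. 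Hence $\|\cL_{\bfal_{\del_n}}[\tilde u_n]\|_{L^2(\Omega_{\mathrm{ext}})}\le C\|f\|_{L^2}$, and along a further subsequence $\cL_{\bfal_{\del_n}}[\tilde u_n]\wto \tilde w$ weakly in $L^2(\Omega_{\mathrm{ext}})$. For $\phi\in C^\infty_c(\Omega_{\mathrm{ext}})$ the identity $\int \cL_{\bfal_{\del_n}}[\tilde u_n]\,\phi = \int\tilde u_n\,\cL_{\bfal_{\del_n}}[\phi]$ passes to the limit (strong $L^2$-convergence of $\tilde u_n\to\tilde u$ together with Theorem~\ref{thm:convlap} on $\Omega_{\mathrm{ext}}$) and yields $\Delta\tilde u = \tilde w\in L^2(\Omega_{\mathrm{ext}})$ in the distributional sense. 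Since $\tilde u = 0$ outside $\Omega$ and $\tilde u = u\in W^{2,2}(\Omega)$ inside, the standard jump computation
$$\langle\Delta\tilde u,\phi\rangle = \int_\Omega (\Delta u)\,\phi\,d\bfx - \int_{\partial\Omega}(\partial_\nu u)\,\phi\,dS$$
forces $\partial_\nu u = 0$ on $\partial\Omega$, because no Dirac contribution is compatible with $\Delta\tilde u\in L^2$.

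Combining the interior regularity with $\partial_\nu u = 0$ gives $u\in W^{2,2}_0(\Omega)$ solving $\Delta^2 u = f$; uniqueness of the classical clamped problem then upgrades the subsequential convergence to convergence of the whole sequence. When $\Omega$ is of class $C^4$, the standard elliptic regularity theory for the biharmonic operator with clamped data delivers $u\in W^{4,2}(\Omega)$. I expect the boundary-derivative step described above to be the only genuinely new ingredient beyond the template of Theorem~\ref{thm:hinged} and Theorem~\ref{L2}.
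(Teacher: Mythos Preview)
Your proof is correct, and Steps~1--2 (the a priori estimates, compactness, identification of the distributional biharmonic, and the $W^{2,2}$ upgrade via elliptic regularity) coincide with the paper's Steps~1--2 almost verbatim. The divergence is only in how $\partial_\nu u=0$ is extracted. The paper stays on $\Omega$: it tests the nonlocal identity $\int_\Omega \cL_{\bfal_{\delta_n}}[u_n]\,\psi = \int_\Omega u_n\,\cL_{\bfal_{\delta_n}}[\psi]$ against functions $\psi\in C^4(\Omega)\cap W^{4,\infty}(\Omega)$ that are \emph{not} compactly supported, passes to the limit using the weak convergence $\cL_{\bfal_{\delta_n}}[u_n]\rightharpoonup\Delta u$ on the left and $\chi_{\Omega_{\delta_n}}\cL_{\bfal_{\delta_n}}[\psi]\to\Delta\psi$ (Theorem~\ref{thm:convlap}) together with $u_n=0$ on $\Omega\setminus\Omega_{\delta_n}$ on the right, and obtains $\int_\Omega(\Delta u)\psi=\int_\Omega u\,\Delta\psi$; Green's formula plus density of the traces of such $\psi$ in $L^2(\partial\Omega)$ then kills the normal derivative. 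Your extension route instead pushes the problem out to $\Omega_{\mathrm{ext}}$ and reads off $\partial_\nu u=0$ as the absence of a single-layer term in $\Delta\tilde u$. Your argument makes the role of the \emph{double} collar $\Omega\setminus\Omega_{2\delta_n}$ completely explicit---it is exactly what forces $\cL_{\bfal_{\delta_n}}[\tilde u_n]=0$ in the $\delta_n$-neighborhood of $\partial\Omega$ from both sides; in the paper the double collar enters only upstream, in the energy identity that supplies the $L^2(\Omega)$ bound on $\cL_{\bfal_{\delta_n}}[u_n]$. The paper's version avoids setting up an extended domain and an extended nonlocal Laplacian; yours avoids the density-of-traces step and is arguably more transparent about where the clamped problem differs from the hinged one.
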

\begin{remark}[Regularity of the domain]\label{rem:clamped-regularity}
Again, the regularity of the domain requested in Theorem \ref{clampedthm} is only invoked to identify the limit of the nonlocal solutions as the solution of the corresponding classical elliptic problem.
However, for a fixed horizon $\del>0$, the solution to the nonlocal problem furnished by Proposition \ref{prop:lax-milgram} does not place any restrictions on the regularity of the domain. Furthermore, the $L^2$ limit of the non-local approximations is well-defined on $C^1$ domains (just as in the hinged case;  see also Remark \ref{rem:regularity}).
\end{remark}

\begin{proof}
\textbf{Step 1.}
We plan to invoke Theorem \ref{thm:UniformGrad} (Corollary \ref{coro:UniformGrad}), so we need to demonstrate an upper bound on $\| \cG_{\bfal_{\delta_n}} [u_n]\|_{L^2(\Omega)}$, independent of $n$. 
Apply the Poincar\'e-type inequality of Corollary \ref{coro:poincare:0} and
nonlocal integration by parts  (Proposition \ref{parts}, applied in the weak sense as indicated in Remark \ref{rem:by-parts-weak-strong}):
\begin{equation}\label{coerc}
\begin{split}
c\|u_n\|_{L^2(\Omega)}^2 \leq \| \cG_{\bfal_{\delta_n}}[u_n] \|_{L^2(\Omega\times\Om)}^2 
=& -\intOm \cL_{\bfal_{\delta_n}}[u_n] u_n d\bfx \\
\leq& \| \cL_{\bfal_{\delta_n}}[u_n] \|_{L^2(\Omega)} \| u_n \|_{L^2(\Omega)}
\end{split}
\end{equation}
for some $c>0$. Hence
\[
c\|u_n\|_{L^2(\Omega)} \leq \| \cL_{\bfal_{\delta_n}}[u_n] \|_{L^2(\Omega)} 
\]
\[
 \|  \cG_{\bfal_{\delta_n}}[u_n] \|_{L^2(\Omega\times\Om)}^2 \leq \frac{1}{c}\| \cL_{\bfal_{\delta_n}}[u_n] \|_{L^2(\Omega)}^2\,.
\]  
Thus:
\begin{equation}\label{coerc2}
\begin{split}
\|u_n \|_{L^2(\Omega)}^2 \leq& \frac{1}{c} \|\cG_{\bfal_{\delta_n}} [u_n] \|_{L^2(\Omega\times \Omega)}^2 
\leq \frac{1}{c^2} \|\cL_{\bfal_{\delta_n}} [u_n] \|_{L^2(\Omega)}^2 \\
=& \frac{1}{c^2} \left| \langle \mathcal{B}_{\bfal_{\delta_n}}[u_n], u_n \rangle_{L^2(\Om)} \right|\,.
\end{split}
\end{equation}
Since the clamped space \eqref{H-clamped-delta} enforces zero data on the extended collar $\Om\setminus\Om_{2\del}$ we continue the estimate \eqref{coerc2}
\begin{equation}\label{coerc3}
\begin{split}
\ldots \leq & \frac{1}{c^2}
\left| \langle \cB_{\bfal_{\del_n}}[u_n],u_n\rg_{L^2(\Om_{2\del})} \right|
= \frac{1}{c^2} \left| \langle f, u_n \rangle_{L^2(\Omega)} \right| \\
\leq& \frac{1}{c^2} \|f\|_{L^2(\Omega)} \|u_n \|_{L^2(\Omega)}
\end{split} 
\end{equation}
where $c$ is independent of $n$. Therefore,  $( u_n )$ is  bounded in $L^2(\Omega)$ and so is $( \cG_{\bfal_{\delta_n}} [u_n] )$.  By Corollary \ref{coro:UniformGrad},  $\left\{u_n \right\}$ is relatively compact in $L^2(\Omega)$ and if $u$ is a cluster point of $\left\{u_n \right\}$, then $u \in W^{1,2}_0(\Omega)$. 

It remains to show that any cluster point of $\left\{ u_n \right\}$  is the unique weak solution of \eqref{locbiclamped}.   Pick a test function $\phi \in C_c^\infty(\Omega)$. We may assume that $\del_n$ is small enough so that $\supp \phi \subset \Om_{3\del_n}$ (note the factor of $3$ instead of just $2$). By Proposition \ref{parts} we then have
\begin{equation*}
\langle \mathcal{B}_{\bfal_{\delta_n}} [\phi],u_n \rangle_\ltom = \langle \phi, \mathcal{B}_{\bfal_{\delta_n}} [u_n] \rangle_{\ltom}  = \langle \phi, f \rangle_{\ltom}
\end{equation*}
Use the fact that $u_n \to u$ strongly in $\ltom$ and Theorem \ref{opconvbi}
to conclude that
\begin{equation*}
\intOm \Delta^2 \phi \,u\; d\bfx = \int \phi f d\bfx
\end{equation*}
as $n\to \infty$. This verifies that $u \in W^{1,2}_0(\Om)$  is a distributional solution to
\[
 \Del^2 u = f\,.
\]
\smallskip

\textbf{Step 2.} Let's prove  $u\in W^{2,2}(\Om)$. Set $  z_n = \cL_{\bfal_{\del_n}}[u_n] $. From  \eqref{coerc2}--\eqref{coerc3} 
 \[
 \|  z_n \|^2_{\ltom} 
\leq  \|f\|_{\ltom}^2 \|u_n \|_{\ltom}^2\,.
 \]
Since $(u_n)$ is bounded in $\ltom$ then so is $(z_n)$. Conclude that $(z_n)$ converges weakly to some $z$ in $\ltom$. At the same time,  for $\phi \in C_c^\infty(\Om)$ nonlocal integration by parts gives
\[
\lg z_n, \phi \rg_\ltom = \lg u_n, \Ln[\phi]\rg_\ltom\,.
\]
We know $u_n \to u$ strongly in $\ltom$ and  because $\phi$ is compactly supported, by Theorem \ref{thm:convlap} we also have $\Ln[\phi] \to \Del \phi $ strongly in $\ltom$. And since $z_n$ converges weakly then in the limit $n\to \infty$ this identity becomes
\[
\lg z,  \phi \rg_{\ltom} = \lg u, \Del \phi \rg_\ltom\,.
\]
So $z\in \ltom$ is the distributional Laplacian of $u$. Since $u\in W^{1,2}_0(\Om)$ and the domain $\Om$ is assumed either $C^2$ or convex (on top of $C^1$), then the elliptic regularity ensures that $u\in W^{2,2}(\Om)\cap W^{1,2}_0(\Om)$.

\smallskip

\textbf{Step 3.} To prove that $u$ is the weak variational solution of the classical biharmonic problem it remains to establish that $u$ has zero normal trace. For this purpose  consider a family of test functions in $C^{4}(\Om)\cap W^{4,\infty}(\Om)$ to which we will ultimately  apply Theorem \ref{thm:convlap}. Due to the at least $C^1$ regularity of the domain  (in fact, for this statement even a bounded Lipschitz domain would suffice) the function space $C^\infty(\cl{\Om})$ is dense in $W^{1,2}(\Om)$ \cite[Thm. 3, p. 127]{b:eva-gar:92}. In turn, the traces of $W^{1,2}(\Om)$ functions are onto $W^{1,1/2}(\Gam)$ (e.g., for uniform $C^1$ domains) and, thus, dense in $L^2(\Gam)$. Hence the traces of $C^{4}(\Om)\cap W^{4,\infty}(\Om)$ functions are dense in $L^2(\Gam)$.

Fix  $\psi \in C^{4}(\Om)\cap W^{4,\infty}(\Om)$.
We  showed in Step 2 that $\Del u \in \ltom $, and by Theorem  \ref{thm:convlap} we know that $z_n =\Ln[u_n]$ converges to $\Del u$ weakly in $\ltom$.  Thus 
\begin{equation}\label{L-n-u-v-to-Del-u-v}
\lim_{n\to \infty}\intOm \cL_{\bfal_{\delta_n}} [ u_n] \psi\, d \bfx = \intOm \Delta u \psi\, d \bfx\,.
\end{equation}
The goal is to verify that
\[
\intOm \Delta u \psi\, d \bfx = \intOm u \Delta \psi \, d \bfx\,.
\]
Invoke again  nonlocal integration by parts:
\begin{equation*}
\begin{split}
&\intOm \cL_{\bfal_{\delta_n}}[u_n] (\bfx)\psi(\bfx)d \bfx = \intOm u_n(\bfx) \cL_{\bfal_{\delta_n}}[\psi](\bfx) d \bfx \\
=& \left(\int_{\Omega_{\delta_n}}  + \int_{\Om\setminus \Om_{\del_n}} 
   \right)  u_n(\bfx)\Ln[\psi](\bfx)d\bfx\,.
\end{split}
\end{equation*}
The second of the two integrals vanishes since $u_n(\bfx) = 0$ on the collar $\Om\setminus \Om_\del $. Whereas  $\chi_{\Om_{\del_n}} \Ln[\psi]$ converges strongly to $\Del \psi$ by Theorem \ref{thm:convlap}. Along with the strong convergence $u_n \to u$ in $\ltom$ this argument gives
\[
\lim_{n \to \infty}\intOm \cL_{\bfal_{\delta_n}}[u_n]  \psi (\bfx)d \bfx = \int_{\Omega} u  \Delta \psi d \bfx.
\]
In combination with \eqref{L-n-u-v-to-Del-u-v} we conclude
\[
\int_{\Omega} \Delta u \psi d \bfx = \int_{\Omega} u  \Delta \psi d \bfx\,.
\]
Since $u=0$ on the boundary and the traces of  $ C^4(\Om)\cap W^{4,\infty}(\Om)$ functions are dense in $L^2(\Gam)$, then the above identity with arbitrary $\psi \in C^4(\Om)\cap W^{4,\infty}(\Om) $  implies 
\[
 \Dn{u}  =0\,.
\]
Thus $u$ is a weak solution to the elliptic problem \eqref{locbiclamped}.   When the domain is of class $C^4$ the additional $W^{4,2}(\Om)  $ regularity follows from the same remarks as at the end at the end of the proof of Theorem \ref{thm:hinged}.
\end{proof}

\bibliographystyle{abbrv} 
\bibliography{JetBib1}
\end{document}